\numberwithin{equation}{section}
\newtheorem{theorem}{Theorem}[section]
\newtheorem{lemma}[theorem]{Lemma}
\newtheorem{corollary}[theorem]{Corollary}
\theoremstyle{definition}
\newtheorem{definition}[theorem]{Definition}
\newtheorem{problem}[theorem]{Problem}
\newtheorem{remark}[theorem]{Remark}
\newtheorem*{ack}{Acknowledgement}
\theoremstyle{remark}
\newenvironment{romenumerate}{\begin{enumerate}% gives (i), (ii) etc.
 }{\end{enumerate}}
\newcounter{oldenumi}
\newcounter{thmenumerate}
\newcounter{xenumerate}   %no left indentation; thus wider lines
\newcommand\pfitem[1]{\par(#1):}
\newcommand{\refT}[1]{Theorem~\ref{#1}}
\newcommand{\refC}[1]{Corollary~\ref{#1}}
\newcommand{\refL}[1]{Lemma~\ref{#1}}
\newcommand{\refR}[1]{Remark~\ref{#1}}
\newcommand{\refS}[1]{Section~\ref{#1}}
\newcommand{\refSS}[1]{Subsection~\ref{#1}}
\newcommand{\refD}[1]{Definition~\ref{#1}}
\newcommand{\refP}[1]{Problem~\ref{#1}}
\newcommand{\refApp}[1]{Appendix~\ref{#1}}
\newcommand{\refand}[2]{\ref{#1} and~\ref{#2}}
\xdef\klockan{\the\count1.0\the\count255}
\xdef\klockan{\the\count1.\the\count255}\fi
\newcommand\nopf{\qed}   % for theorem without proof
\newcommand\set[1]{\ensuremath{\{#1\}}}
\newcommand\Bigset[1]{\ensuremath{\Bigl\{#1\Bigr\}}}
\newcommand\xpar[1]{(#1)}
\newcommand\bigpar[1]{\bigl(#1\bigr)}
\newcommand\Bigpar[1]{\Bigl(#1\Bigr)}
\newcommand\lrpar[1]{\left(#1\right)}
\newcommand\bigabs[1]{\bigl|#1\bigr|}
\newcommand\biggabs[1]{\biggl|#1\biggr|}
\newcommand\lrabs[1]{\left|#1\right|}
\def\rompar(#1){\textup(#1\textup)}    % usage: \rompar(...)
\newcommand\xfrac[2]{#1/#2}
\newcommand\parfrac[2]{\lrpar{\frac{#1}{#2}}}
\def\xexp(#1){e^{#1}}
\newcommand\ceil[1]{\lceil#1\rceil}
\newcommand\floor[1]{\lfloor#1\rfloor}
\newcommand\ntoo{\ensuremath{{n\to\infty}}}
\newcommand\norm[1]{\|#1\|}
\newcommand\ie{i.e.\spacefactor=1000}
\newcommand\eg{e.g.\spacefactor=1000}
\newcommand\cf{cf.\spacefactor=1000}
\newcommand{\aex}{a.e.\spacefactor=1000}
\newcommand{\tend}{\longrightarrow}
\newcommand\dto{\overset{\mathrm{d}}{\tend}}
\newcommand\pto{\overset{\mathrm{p}}{\tend}}
\newcommand\lto{\overset{L^1}{\tend}}
\newcommand\bbR{\mathbb R}
\newcounter{CC}
\newcounter{cc}
\newcommand\E{\operatorname{\mathbb E{}}}
\renewcommand\P{\operatorname{\mathbb P{}}}
\newcommand\Var{\operatorname{Var}}
\newcommand\ga{\alpha}
\newcommand\gb{\beta}
\newcommand\gd{\delta}
\newcommand\gf{\varphi}
\newcommand\gam{\gamma}
\newcommand\gl{\lambda}
\newcommand\gs{\sigma}
\newcommand\eps{\varepsilon}
\newcommand\cI{\mathcal I}
\newcommand\cJ{\mathcal J}
\newcommand\cP{\mathcal P}
\newcommand\cS{{\mathcal S}}
\newcommand\tE{{\tilde E}}
\newcommand\tf{{\tilde f}}
\newcommand\ett[1]{\boldsymbol1[#1]} 
\newcommand\etta{\boldsymbol1} 
\def\[#1]{[\![#1]\!]}
\newcommand\qw{^{-1}}
\newcommand\qww{^{-2}}
\renewcommand{\=}{:=}
\newcommand\intoi{\int_0^1}
\newcommand\oi{[0,1]}
\newcommand\oio{(0,1)}
\newcommand\setoi{\set{0,1}}
\newcommand\dd{\,\textup{d}}
\newcommand\lhs{left-hand side}
\newcommand\rhs{right-hand side}
\newcommand\aaf{A_1,\dots,A_\ff}
\newcommand\aafx{A_1\times\dots\times A_\ff}
\newcommand\aam{A_1,\dots,A_m}
\newcommand\aamx{A_1\times\dots\times A_m}
\newcommand\aagsmx{A_{\gs(1)}\times\dots\times A_{\gs(m)}}
\newcommand\ijifx{I_{nj_1}''\times\dots\times I_{nj_\ff}''}
\newcommand\aayf{A'_1,\dots,A'_\ff}
\newcommand\aafyx{A'_1\times\dots\times A'_\ff}
\newcommand\AAm{A_2,\dots, A_m}
\newcommand\AAmx{A_2\times\dots\times A_m}
\newcommand\fAAm{f^{A_2,\dots, A_m}}
\newcommand\bbfx{B_1\times\dots\times B_\ff}
\newcommand\bbm{B_1,\dots,B_m}
\newcommand\bbmx{B_1\times\dots\times B_m}
\newcommand\bbnm{B_{n1},\dots,B_{nm}}
\newcommand\uuf{U_1,\dots,U_\ff}
\newcommand\uuif{U'_1,\dots,U'_\ff}
\newcommand\uuifx{U'_1\times\dots\times U'_\ff}
\newcommand\uuiifx{U''_1\times\dots\times U''_\ff}
\newcommand\xxf{x_1,\dots,x_\ff}
\newcommand\xxm{x_1,\dots,x_m}
\newcommand\yym{y_1,\dots,y_m}
\newcommand\iim{i_1,\dots,i_m}
\newcommand\aaiimx{A_{i_1}\times\dotsm\times A_{i_m}}
\newcommand\intaaiimx{\int_{\aaiimx}}
\newcommand\xxgsm{x_{\gs(1)},\dots,x_{\gs(m)}}
\newcommand\ff{{|F|}}
\newcommand\eff{^{e(F)}}
\newcommand\prodif{\prod_{i=1}^\ff}
\newcommand{\sumim}{\sum_{i=1}^m}
\newcommand{\sumiom}{\sum_{i=0}^m}
\newcommand{\sumif}{\sum_{i=1}^\ff}
\newcommand\intaafx{\int_{\aafx}}
\newcommand\intaamx{\int_{\aamx}}
\newcommand\intaf{\int_{A^\ff}}
\newcommand\oiff{\oi^\ff}
\newcommand\Nq{N^*}
\newcommand\Psiq{\Psi^*}
\newcommand\tPsiq{\widetilde\Psi^*}
\newcommand\fS{\mathfrak S}
\newcommand\fsf{\fS_\ff}
\newcommand\psifx[1]{\Psi_{F,#1}}
\newcommand\psifw{\psifx W}
\newcommand\psifwn{\psifx {W_n}}
\newcommand\tpsifx[1]{\widetilde\Psi_{F,#1}}
\newcommand\tpsifw{\tpsifx W}
\newcommand\psiqfx[1]{\Psiq_{F,#1}}
\newcommand\tpsiqfx[1]{\tPsiq_{F,#1}}
\newcommand\psiqfw{\psiqfx W}
\newcommand\tpsiqfw{\tpsiqfx W}
\newcommand\phiw{\Phi_W}
\newcommand\Phix{\Phi} %^*
\newcommand\mpx{measure preserving}
\newcommand\cut{_{\square}}
\newcommand\cn[1]{\norm{#1}\cut}
\newcommand\cnx[2]{\norm{#1}_{\square,#2}}
\newcommand\mppqr{mixed $(p,\cpp)$-quasi-random}
\newcommand\Sze{Szemer\'edi}
\newcommand\SRL{\Sze{} regularity lemma}
\newcommand\SsRL{\Sze's regularity lemma}
\newcommand\xex{X^\eps_x}
\newcommand\cp{\overline}
\newcommand\pqr{$p$-\qr}
\newcommand\cpp{\bar p}
\newcommand\cpqr{$\cpp$-\qr}
\newcommand\qr{quasi-random}
\newcommand\HF{\ensuremath{\mathsf{HI}}}
\newcommand\HFp{\HF(p)}
\newcommand\gnq{\ensuremath{(G_n)}}
\newcommand\gnabs{\ensuremath{|G_n|}}
\newcommand\iniy{I_{ni}''}
\newcommand\injy{I_{nj}''}
\newcommand\inj{I_{nj}}
\newcommand\setdiff{\bigtriangleup}
\newcommand\xim{\xi_1,\dots,\xi_m}
\newcommand\gax{\gam}
\newcommand\gc{\ga}
\newcommand\egn{e_{G_n}}
\newcommand\cpu{\cp U}
\newcommand\cpax{\oi\setminus A}
\newcommand\cpA{\cp A}
\renewcommand\wp{W_{\cP}}
\newcommand{\tinj}{t_{\mathrm{inj}}}
\newcommand{\tind}{t_{\mathrm{ind}}}
\newcommand{\dcut}{\gd_\square}
\newcommand\nn{[n]}
\newcommand\sss{\cS}
\newcommand\lp{Lebesgue point}
\newcommand\fall[2]{(#1)_{#2}}
\newcommand\fallm[1]{\fall{#1}m}
\providecommand\ij\relax   %seems to be defined in some versions of LaTeX but not all
\renewcommand\ij{_{ij}}
\newcommand\gdn{\gd_n}
\newcommand\etan{\eta_n}
\newcommand\bin{B_{ni}}
\newcommand\bjn{B_{nj}}
\newcommand\wijn{w_{ij,n}}
\newcommand\gbfp{\gb_F(p)}
\newcommand\gbf{\gb_F}
\newcommand{\Lovasz}{Lov\'asz}
\newcommand\urladdrx[1]{{\urladdr{\def~{{\tiny$\sim$}}#1}}}
\begin{document}
\title%[Quasi-random graphs and graph limits]
{Quasi-random graphs and graph limits}

\date{May 20, 2009} % (typeset \today{} \klockan)} %; revised ...

\author{Svante Janson}
\thanks{Research partly done at Institut Mittag-Leffler,  Djursholm,
Sweden}

\address{Department of Mathematics, Uppsala University, PO Box 480,
SE-751~06 Uppsala, Sweden}
\email{svante.janson@math.uu.se}
\urladdrx{http://www.math.uu.se/~svante/}

%\keywords{<keywords>}
\subjclass[2000]{05C99} 
%{60C05 (68P10,68W40)} %%{Primary: <subject>; Secondary: <subject>}

\begin{abstract} 
We use the theory of graph limits to
study several quasi-random properties, mainly dealing with 
various versions of hereditary subgraph counts. 
The main idea is to
transfer the properties of (sequences of) graphs to properties of
graphons, and to show that the resulting graphon properties only can
be satisfied by constant graphons.
These \qr{} properties have been studied before by other authors,
but our approach gives proofs that we find cleaner, and which
avoid the error terms and $\eps$ in the traditional
arguments using 
the \SRL. On the other hand, other technical problems sometimes arise in
analysing the graphon properties; in
particular, a
measure-theoretic problem on elimination of null sets
that arises in this way is treated in an
appendix.
\end{abstract}

\maketitle

\section{Introduction}\label{Sintro}

A \emph{\qr} graph is a graph that 'looks like' a
random graph. Formally, this is best defined for a sequence of graphs
$(G_n)$ with $|G_n|\to\infty$. 
\citet{Thomason87a,Thomason87b} and \citet{ChungGW:quasi} showed
that a number of different 'random-like' conditions on such a sequence
are equivalent, and we say that \gnq{} is \pqr{} if
it satisfies these conditions. (Here $p\in\oi$ is a
parameter.)
We give one of these conditions, which is based on subgraph counts, 
in \eqref{pqr} below.
Other characterizations have been added by various authors.
The present paper studies in particular 
hereditarily extended subgraph
count properties found by \citet{SS:nni,SS:ind},
\citet{Shapira},  \citet{ShapiraY} and \citet{Yuster}; see \refS{Ssub}.
See also Sections \refand{Scut}{Sdeg} for further related equivalent properties
(on sizes of cuts) found by \citet{ChungGW:quasi} and \citet{ChungG}.

The theory of \emph{graph limits}
also concern the asymptotic behaviour of sequences
$(G_n)$ of graphs with $|G_n|\to\infty$. 
A notion of convergence of such sequences was introduced by
\citet{LSz} and further developed by
\citet{BCLSV1,BCLSV2}.
This may be seen as giving
the space of (unlabelled) graphs a
suitable metric; the convergent sequences are the Cauchy sequences in
this metric, and the completion of the space of unlabelled graphs in
this metric is the space of (graphs and) graph limits.
The graph limits are thus defined in a rather abstract way, but there
are also more concrete representations of them.
One important 
representation \cite{LSz,BCLSV1} uses 
a symmetric (Lebesgue) measurable function $W:\oi^2\to\oi$;
such a function is called a \emph{graphon}, and defines a unique graph
limit, see \refS{Snot} for details. Note, however, that the
representation is not unique; different graphons may be equivalent in
the sense of defining the same graph limit.
See further \cite{BCL:unique,SJ209}. 

We write, with a minor abuse of notation, $G_n\to W$, if
$\gnq$ is a sequence of graphs and $W$ is a graphon
such that $\gnq$ converges to the graph limit defined by $W$.
It is well-known that \qr{} graphs provide the simplest
example of this: \gnq{} is \pqr{} if and only if
$G_n\to p$, where $p$ is the graphon that is constant $p$
\cite{LSz}. %(For a generalization, see \cite{LSos}.)

A central tool to study large dense graphs is 
\SsRL, and it is not surprising that this is closely connected to
the theory of graph limits, see, \eg,
\cite{BCLSV1,LSz:Sz}.
The \SRL{} is also important for the study of \qr{}
graphs. For example, \citet{SS:Sze} gave a characterization 
of \qr{} graphs in terms of \Sze{} partitions. 
Moreover, the proofs 
in
\cite{SS:nni,SS:ind,Shapira,ShapiraY} that various properties
characterize \qr{} graphs (see \refS{Ssub}) use the \SRL.
Roughly speaking, the idea is to take a \Sze{} partition of
the graph{} and use the property to show that the
\Sze{} partition has almost constant densities.

The main purpose of this paper is to point out that these, and other
similar, characterizations of \qr{} graphs 
alternatively can be proved by 
replacing the \SRL{} and \Sze{} partitions by 
graph limit theory.
The idea is to
first take a graph limit of the sequence (or, in general, of a
subsequence) and a representing graphon, then
the property we assume of the graphs is translated into a property of
the graphon, and finally it is proved that this graphon
then has to be (\aex) constant. 
We do this for several different related characterizations below.
Our proofs will all have the same structure and consist of three
parts,
considering a sequence of graphs $(G_n)$ and a graphon $W$
with $G_n\to W$:
\begin{romenumerate}
  \item
An equivalence between a
condition on subgraph counts in $G_n$ and a corresponding
condition for integrals of a functional $\Psi$ of $W$.
($\Psi$ is a function on $\oi^m$ for some $m$,
and is a polynomial in $W(x_i,x_j)$, $1\le i<j\le m$.)
  \item
An equivalence between this integral condition on $\Psi$ and a pointwise
condition on  $\Psi$.
  \item
An equivalence between this pointwise
condition on $\Psi$ and $W=p$.
\end{romenumerate}
In all cases that we consider, (i) is rather straightforward, and performed
in essentially the same way for all versions. Step (ii) follows from
some version of the Lebesgue differentiation theorem, although some
cases are more complicated than others.
The arguments used in  (iii)
are similar to the arguments in earlier proofs that the
\Sze{} partition has almost constant densities
(under the corresponding condition on the graphs)
and the
algebraic problems that arise in some cases will be  the
same. However, the use of graph limits eliminates the many error terms
and $\eps$ inherent in arguments using the \SRL, and
provides at least sometimes proofs that are simpler and cleaner.
With some simplification, we can say that 
we split the proofs into
three parts (i)--(iii) which are combinatorial, analytic and
algebraic, respectively. This has the advantage of isolating 
different types of technical difficulties; moreover, it allows us to reuse
some steps that are the same for several different cases.
(See for example \refS{Svar} where we prove several variants
of the characterizations by modifying step (i) or (ii).)
On the other hand, it has to be admitted that there can be technical
problems with the analysis of the graphons too, especially in (ii), and that
our approach does not simplify the algebraic problems in (iii).
(In particular, we have not been able to improve the results in
\cite{SS:ind}, where it is this algebraic part that has not
yet been done for general graphs.)
Somewhat disappointingly, it seems that the graph limit method offers
greatest simplifications in the simplest cases.
At the end, it is partly a matter of taste if one prefers the
finite arguments using the \SRL{} or the infinitesimal
arguments using graphons; we invite the reader to make comparisons.

\begin{ack}
This work was begun during the workshop on 
Graph Limits, Homomorphisms and Structures in
Hrani\v cn\'i Z\'ame\v cek, Czech Republic, 2009.
We thank  
Asaf Shapira,
Miki Simonovits, Vera S\'os
and Bal\'azs Szegedy
for interesting discussions.
%Part of the research was done at Institut Mittag-Leffler in  Djursholm,
%Sweden, 2009.
\end{ack}

\section{Preliminaries and notation}\label{Snot}

All graphs in this paper are finite, undirected and simple.
The vertex and edge sets of a graph $G$ are denoted by
$V(G)$ and $E(G)$. We write $|G|\=|V(G)|$ for the
number of vertices of $G$, and $e(G)\=|E(G)|$ for the
number of edges.
$\cp G$ is the complement of $G$.
As usual, $\nn\=\set{1,\dots,n}$.

\subsection{Subgraph counts}\label{SSsub}
Let $F$ and $G$ be graphs. It is convenient to assume that
the graphs are labelled, with
$V(F)=[\ff]\=\set{1,\dots,\ff}$, but the labelling does
not affect our results.
We define $N(F,G)$ as the number of labelled (not necessarily
induced) copies of $F$ in $G$; equivalently, $N(F,G)$ is the
number of injective maps $\gf:V(F)\to V(G)$ that are graph
homomorphisms
(\ie, if $i$ and $j$ are adjacent in $F$, then
$\gf(i)$ and $\gf(j)$ are adjacent in $G$).
If $U$ is a subset of $V(G)$, we further define
$N(F,G;U)$ as the number of such copies with all vertices in
$U$; thus $N(F,G;U)=N(F,G|_{U})$.
More generally, if $\uuf$ are subsets of $V(G)$, we
define
$N(F,G;\uuf)$ to be the number of 
labelled copies of $F$ in $G$ with the $i$th vertex
in $U_i$; equivalently, $N(F,G;\uuf)$ is the number of
injective graph homomorphisms 
$\gf:F\to G$ such that $\gf(i)\in U_i$ for every 
$i\in V(F)$.

\subsection{Quasi-random graphs}\label{SSqrg}
One of the several equivalent definitions of \qr{} graphs by
\citet{ChungGW:quasi} is:
\gnq{} (with $|G_n|\to\infty$)
is \pqr{} if and only if, 
for every graph $F$,
\begin{equation}
  \label{pqr}
N(F,G_n)= (p\eff+o(1)) |G_n|^\ff.
%\text{for every graph $F$}
\end{equation}
(All unspecified limits in this paper are as \ntoo, and $o(1)$
denotes a quantity that tends to $0$ as \ntoo.
We will often use $o(1)$
for quantities that depend on some subset(s)
of a vertex set $V(G)$ or of $\oi$; we then always
implicitly assume that the convergence is uniform for all choices of
the subsets. 
We interpret $o(a_n)$ for a given sequence $a_n$ similarly.)

It turns out that it is not necessary to require \eqref{pqr}
for all graphs $F$; in particular, it suffices to use the graphs $K_2$ and
$C_4$ \cite{ChungGW:quasi}. However, it is not enough to require \eqref{pqr}
for just one graph $F$. As a substitute, \citet{SS:nni} showed that a
hereditary version of \eqref{pqr} for a single 
$F$ is sufficient; see \refS{Ssub}.

\subsection{Graph limits}\label{SSlimits}
The graph limit theory is also based on the subgraph counts
$N(F,G)$ (or the asymptotically equivalent number counting not
necessarily injective graph homomorphisms $F\to G$, see \cite{LSz,BCLSV1}).
A sequence \gnq{} of graphs, with
$|G_n|\to\infty$, \emph{converges},
if the numbers $\tinj(F,G_n)\=N(F,G_n)/(|G_n|)_\ff$
converge as \ntoo, for every fixed graph $F$.
(Here, $(|G_n|)_\ff$ denotes the falling factorial, which is the total number
of injective maps $V(F)\to V(G_n)$, so $\tinj(F,G_n)$ is
the proportion of injective maps that are homomorphisms. Since we
consider limits as $|G_n|\to\infty$ only, 
we could as well instead consider 
$t(F,G_n)$, 
the proportion of \emph{all} maps $V(F)\to V(G_n)$
that are homomorphisms, or the hybrid version
$N(F,G_n)/|G_n|^\ff$.)
Note that the numbers $\tinj(F,G_n)\in[0,1]$, which
implies the compactness property that every sequence \gnq{} of graphs
with $|G_n|\to\infty$
has a convergent subsequence.
For details and several other equivalent properties, see
\citet{LSz} and
\citet{BCLSV1,BCLSV2}; see also \citet{SJ209}.

%The definition of convergence just given does not say what the limit is. 
The graph limits that arise in this way may be thought of as
elements of a completion of the space of (unlabelled) graphs with a
suitable metric. One useful 
representation \cite{LSz,BCLSV1} uses 
a symmetric measurable function $W:\oi^2\to\oi$;
such a function is called a \emph{graphon}, and defines a graph limit
in the following way.
If $F$ is a graph and $W$ a graphon, we define
\begin{equation}\label{psifw}
  \psifw(\xxf)\=\prod_{ij\in E(F)} W(x_i,x_j)
\end{equation}
and
\begin{equation}\label{tfw}
  t(F,W)\=\int_{\oiff}\psifw.
\end{equation}
(All integrals in this paper are with respect to the Lebesgue
measure in one or several dimensions, unless, in the appendix, we
specify another measure.)
A sequence \gnq{} converges to
the graph limit defined by $W$ if
$|G_n|\to\infty$ and 
\begin{equation}\label{tow}
\lim_\ntoo\tinj(F,G_n)= t(F,W)  
\end{equation}
(or, equivalently, $t(F,G_n)\to t(F,W)$)  
for every
$F$; as said above, in this case we write $G_n\to W$, although it
should be remembered that the
representation of the limit by a graphon $W$ is not unique.
(See \cite{BCLSV1,BCL:unique,SJ209,BR} for details on the
non-uniqueness. Note that, trivially, we may change $W$
on a null set without affecting the corresponding graph limit;
moreover, we may, for example, rearrange $W$ as in \eqref{arr} below.)

For example, the condition \eqref{pqr} can be written
$\tinj(F,G_n)\to p\eff$. Since the constant graphon
$W=p$ has $t(F,W)=p\eff$ for every $F$ by
\eqref{psifw}--\eqref{tfw}, this shows that, as said
in \refS{Sintro}, $\gnq$ is \pqr{} if and
only if $G_n\to p$.

\subsection{Graphons from graphs}\label{SSgg}
If $G$ is a graph, we define a corresponding graphon $W_G$ by
partitioning $\oi$ into $|G|$ intervals $I_i$ of
equal lengths $1/|G|$;
% (it does not matter how we treat the endpoints of the intervals); 
we then define $W_G$ to be 1 on every
$I_i\times I_j$ such that $ij\in E(G)$, and 0
otherwise.
It is easily seen that if $G$ is a graph, then 
\begin{equation}
  N(F,G)=|G|^\ff\int_{\oiff}\psifx{W_G}+O\bigpar{|G|^{\ff-1}}.
\end{equation}
(The error term is because we have chosen to count injective
homomorphisms only, \cf{} \cite{LSz,BCLSV1}.)
More generally, 
if $\uuf$ are subsets of $V(G)$ and $\uuif$ are
the corresponding subsets of $\oi$ given by
$U'_i\=\bigcup_{j\in U_i}I_j$, then
\begin{equation}\label{nfgu}
  N(F,G;\uuf)=|G|^\ff\int_{\uuifx}\psifx{W_G}+O\bigpar{|G|^{\ff-1}}.
\end{equation}

\subsection{Induced subgraph counts}\label{SSind}
In analogy with \refSS{SSsub} we define, for labelled graphs $F$ and $G$,
 $\Nq(F,G)$ as the number of \emph{induced} labelled 
copies of $F$ in $G$; equivalently, $\Nq(F,G)$ is the
number of injective maps $\gf:V(F)\to V(G)$ such
that $i$ and $j$ are adjacent in $F$ $\iff$
$\gf(i)$ and $\gf(j)$ are adjacent in $G$.
We further define
$\Nq(F,G;U)$ as the number of such copies with all vertices in
$U$ and
$N(F,G;\uuf)$ as the number of 
induced labelled copies of $F$ in $G$ with the $i$th vertex
in $U_i$. (Here $U,\uuf\subseteq V(G)$.)

%The corresponding quantities 
For a graphon $W$ we make the corresponding definitions,
\cf{} \refSS{SSlimits},
\begin{equation}\label{psiqfw}
  \psiqfw(\xxf)\=\prod_{ij\in E(F)} W(x_i,x_j) 
\prod_{ij\not\in E(F)}\bigpar{1- W(x_i,x_j)}
\end{equation}
and
\begin{equation}\label{tindfw}
  \tind(F,W)\=\int_{\oiff}\psiqfw.
\end{equation}
Then, for any graph $G$, in analogy with \eqref{nfgu}
and using the notation there,
\begin{equation}\label{nfguq}
  \Nq(F,G;\uuf)=|G|^\ff\int_{\uuifx}\psiqfx{W_G}+O\bigpar{|G|^{\ff-1}}.
\end{equation}

\begin{remark}
If we define $\tind(F,G)\=\Nq(F,G)/(|G|)_\ff$, then
the convergence criterion \eqref{tow} (for every $F$) is equivalent to  
$\tind(F,G_n)\to\tind(F,W)$ (for every $F$) by
inclusion-exclusion \cite{LSz,BCLSV1}. 
%(The equivalence does not hold for individual $F$.)
\end{remark}

\subsection{Cut norm and cut metric}\label{SScut}

The \emph{cut norm} %by \citet{FKquick}
$\cn{W}$ of  $W\in L^1(\oi^2)$  is defined by
\begin{equation}\label{cut}
 \cn W \= \sup_{S,T\subseteq\oi}
  \lrabs{\int_{S\times T} W(x,y)\dd x\dd y}.
\end{equation}
A \emph{rearrangement} of the graphon $W$ is any graphon $W^\gf$ defined by
\begin{equation}\label{arr} 
 W^\gf(x,y) = W(\gf(x),\gf(y)),
\end{equation}
where $\gf:\oi\to\oi$ is a measure-preserving bijection.
The \emph{cut metric} $\gd$
by \citet{BCLSV1}
may be defined by, for two graphons $W_1,W_2$,
\begin{equation}\label{cutdef}
 \dcut(W_1,W_2) = \inf_\gf \cn{W_1-W_2^\gf},
\end{equation}
where the infimum is over all rearrangements of $W_2$.
(It makes no
difference if we rearrange $W_1$ instead, or both $W_1$ and $W_2$.)

A major result of \citet{BCLSV1} is that if
$|G_n|\to\infty$, then
$G_n\to W \iff\dcut(W_{G_n},W)\to0$, so convergence of a sequence of
graphs
as defined above is the same as convergence in the metric $\dcut$.

\section{Subgraph counts in induced subgraphs}\label{Ssub}

\citet{SS:nni} gave the following characterization of
\pqr{} graphs using the numbers of subgraphs of a given type
in induced subgraphs.
(The case $F=K_2$, when $N(K_2,G_n;U)$ is twice the number of edges
with both endpoints in $U$, is one of the original \qr{} properties in
\cite{ChungGW:quasi}.) 

\begin{theorem}[\citet{SS:nni}]
  \label{T2}
Suppose that $(G_n)$ is a sequence of graphs with $|G_n|\to\infty$.
Let $F$ be any fixed graph with $e(F)>0$ and let $0<p\le1$.
Then $(G_n)$ is \pqr{} if and only if,
for all subsets $U$ of\/ $V(G_n)$,
\begin{equation}\label{t2}
  N(F,G_n;U)=p^{e(F)}|U|^\ff+o\bigpar{|G_n|^\ff}.
\end{equation}
\end{theorem}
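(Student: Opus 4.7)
The plan is to follow the three-part strategy outlined in \refS{Sintro}. The \emph{only if} direction is straightforward: if $G_n \to p$, then \eqref{nfgu} combined with continuity of $\int_{S^\ff} \psifx{\cdot}$ in the cut norm (the counting lemma) yields $N(F, G_n; U_n) = p^{e(F)} |U_n|^\ff + o(|G_n|^\ff)$, uniformly in the choice of $U_n \subseteq V(G_n)$, which is \eqref{t2}. For the \emph{if} direction, by compactness of the space of graph limits it suffices to show that any subsequential graphon limit $W$ of $(G_n)$ equals the constant graphon $p$ almost everywhere, so pass to a subsequence with $G_n \to W$.

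\textbf{Steps (i) and (ii) --- combinatorial and analytic.} For each measurable $A \subseteq \oi$, approximate $A$ in symmetric difference by a set $U'_n$ of the form described in \refSS{SSgg} (coming from some $U_n \subseteq V(G_n)$). Feeding the hypothesis \eqref{t2} into \eqref{nfgu}, and using that $G_n \to W$ implies convergence of the integrals $\int_{(U'_n)^\ff} \psifx{W_{G_n}}$, one obtains $\int_{A^\ff} \psifw = p^{e(F)} |A|^\ff$ for every measurable $A \subseteq \oi$. A M\"obius inversion on the subset lattice (applied to $A = \bigcup_{i \in J} A_i$ as $J$ varies over subsets of $[\ff]$) upgrades this to
\begin{equation*}
  \sum_{\gs \in \fsf} \int_{A_{\gs(1)} \times \dots \times A_{\gs(\ff)}} \psifw = p^{e(F)} \ff! \prod_i |A_i|
\end{equation*}
for all disjoint measurable $A_1, \dots, A_\ff \subseteq \oi$. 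Specialising to $A_i$ small intervals around distinct points $x_i$, dividing by $\prod_i |A_i|$ and letting the radii tend to $0$, the Lebesgue differentiation theorem yields, for a.e.\ $(\xxf) \in \oi^\ff$,
\begin{equation*}
  \tpsifw(\xxf) \= \frac{1}{\ff!} \sum_{\gs \in \fsf} \psifw(x_{\gs(1)}, \dots, x_{\gs(\ff)}) = p^{e(F)}.
\end{equation*}

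\textbf{Step (iii) --- algebraic.} Deduce $W = p$ almost everywhere from the pointwise identity $\tpsifw \equiv p^{e(F)}$. For $F = K_2$ this is immediate since $\tpsifw(x_1, x_2) = W(x_1, x_2)$. For general $F$ this is the main obstacle of the argument: one must unfold the symmetrization and exploit that the identity holds for a.e.\ $(\xxf)$, typically by fixing all but one variable and extracting pointwise relations among the values $W(x_i, x_j)$ which, together with $W \in \oi$, force $W \equiv p$. This step reproduces the algebraic content of the proof in \citet{SS:nni}; the graph-limit framework isolates it cleanly from the combinatorial and analytic parts but does not by itself shortcut it.
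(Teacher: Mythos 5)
Your steps (i) and (ii) follow the paper's route: the M\"obius inversion on the subset lattice that you invoke is exactly \refL{L2} (proved via the monomial basis of functions on $\setoi^\ff$), and Lebesgue differentiation (\refL{L1}) then gives $\tpsifw(\xxf)=p^{e(F)}$ for a.e.\ $\xxf\in\oiff$. This is precisely \refL{LA2}(ii)$\iff$(iii). So far the argument matches.

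The gap is in step (iii). You propose to conclude $W=p$ a.e.\ ``by fixing all but one variable and extracting pointwise relations,'' and suggest this merely reproduces the algebra of \citet{SS:nni}. But the paper's algebraic argument for the symmetrized functional (\refL{LC2}) works by setting several coordinates \emph{equal}: first $x_1=\dots=x_\ff=x$ gives $W(x,x)=p$, then $x_1=x$, $x_2=\dots=x_\ff=y$ gives a strictly monotone equation forcing $W(x,y)=p$. This needs $\tpsifw(\xxf)=p^{e(F)}$ to hold for \emph{every} $\xxf$, not merely a.e., because all tuples with repeated coordinates lie in a null set of $\oiff$ and so are invisible to the a.e.\ statement delivered by step (ii). As the paper stresses in \refSS{SSnull}, upgrading the a.e.\ identity to an everywhere identity (after modifying $W$ on a null set, on a full-measure set that contains the diagonal) is a genuine obstacle. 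For the non-symmetrized $\psifw$ there is an elementary workaround (\refL{LB1}) and also a direct a.e.\ argument via Gottlieb's identity (\refL{LC1ae}, \refL{Lassym}); but for the symmetrized $\tpsifw$ that arises here, the paper explicitly states it knows no direct proof of \refL{LC2ae} and instead deduces it from \refL{LC2} together with \refC{CD}, the measure-theoretic null-set-elimination result from \refApp{Appa}, whose proof requires the analytic regularity lemma, Lebesgue points, and Banach limits. Your proposal says nothing about this null-set issue, so step (iii) as written is not a proof.
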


For our discussion of graph limit method, it is also interesting to
consider the 
following weaker version (with a stronger hypothesis), patterned after
\refT{T3} below. 

\begin{theorem}
  \label{T1}
Suppose that $(G_n)$ is a sequence of graphs with $|G_n|\to\infty$.
Let $F$ be any fixed graph with $e(F)>0$ and let $0<p\le1$.
Then $(G_n)$ is \pqr{} if and only if,
for all subsets $\uuf$ of\/ $V(G_n)$,
\begin{equation}\label{t1}
  N(F,G_n;\uuf)=p^{e(F)}\prodif|U_i|+o\bigpar{|G_n|^\ff}.
\end{equation}
\end{theorem}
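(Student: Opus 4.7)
The plan is to follow the three-step framework of \refS{Sintro}. The necessity direction is quick: if $\gnq$ is \pqr{} then $G_n\to p$, so combining \eqref{nfgu} with the uniform convergence of weighted subgraph densities under cut metric convergence~\cite{BCLSV1} yields \eqref{t1}, uniformly over the choice of $\uuf$.

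For the sufficiency direction, by compactness of the space of graph limits we may pass to a subsequence along which $G_n\to W$ for some graphon $W$, and the task becomes to show that $W\equiv p$ almost everywhere. In \emph{Step (i) (combinatorial)}, fix arbitrary measurable sets $A_1,\dots,A_\ff\subseteq\oi$; since the hypothesis is invariant under relabelling, we may choose the labellings so that $W_{G_n}\to W$ in cut norm, and apply \eqref{t1} with $\uuf\subseteq V(G_n)$ whose corresponding subsets $\uuif\subseteq\oi$ approximate $A_1,\dots,A_\ff$. Combining with \eqref{nfgu} and letting \ntoo{} translates the hypothesis into
\begin{equation*}
 \int_{A_1\times\dotsb\times A_\ff}\psifw(x_1,\dots,x_\ff)\dd x_1\dotsm\dd x_\ff
 = p\eff\prod_{i=1}^\ff |A_i|
\end{equation*}
for all measurable $A_i\subseteq\oi$. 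In \emph{Step (ii) (analytic)}, specialising to small boxes $A_i=[x_i,x_i+h]$ and invoking the Lebesgue differentiation theorem gives the pointwise identity
\begin{equation*}
 \psifw(x_1,\dots,x_\ff)=\prod_{ij\in E(F)}W(x_i,x_j)=p\eff
 \quad\text{for a.e.\ }(x_1,\dots,x_\ff)\in\oiff.
\end{equation*}

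In \emph{Step (iii) (algebraic)}, we deduce $W\equiv p$ a.e. Since $p>0$, the pointwise identity forces $W>0$ a.e.\ on $\oi^2$. Fix any edge $\{1,2\}\in E(F)$. For a.e.\ $(x_3,\dots,x_\ff)$ the remaining factors split as a function of $x_1$ times a function of $x_2$ times a constant, giving $W(x_1,x_2)=f_1(x_1)f_2(x_2)$ a.e.\ for positive measurable $f_1,f_2$; the symmetry $W(x,y)=W(y,x)$ then forces $W(x,y)=h(x)h(y)$ a.e.\ for some measurable $h\colon\oi\to(0,\infty)$. Substituting back into the pointwise identity yields
\begin{equation*}
 \prod_{i\in V(F)}h(x_i)^{d_F(i)}=p\eff\qquad\text{a.e.},
\end{equation*}
and since every edge contributes two vertices of positive degree, varying one coordinate at a time (with integer exponent $d_F(i)\ge1$) forces $h$ to be a.e.\ constant, equal to some $c>0$ with $c^2=p$; hence $W\equiv p$ a.e., as required.

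I expect the main obstacle to be the algebraic Step (iii): once $W$ has been shown to factor as a rank-one symmetric product, reinserting this into the pointwise identity reduces matters to the constancy of a function of one variable, and isolated vertices of $F$ cause no trouble since they contribute trivial factors $h(\cdot)^0=1$. Step (i) also requires some care in approximating arbitrary measurable sets by the partition intervals uniformly as \ntoo, but this follows routinely from cut metric convergence.
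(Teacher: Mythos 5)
Your Steps (i) and (ii) reproduce the paper's \refL{LA1}: translate \eqref{t1} to the integral identity $\int_{\aafx}\psifw = p\eff\prodif\gl(A_i)$ via cut norm convergence (the paper also needs a probabilistic averaging argument to pass from partition-aligned sets to arbitrary measurable $A_i$, which you gesture at but do not spell out), and then apply Lebesgue differentiation (\refL{L1}) to get the pointwise identity $\psifw=p\eff$ \aex. So far this is the same route.

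Where you genuinely diverge is in Step (iii). The paper, having arrived at $\psifw=p\eff$ \aex, takes pains over the null set: either it constructs an explicit version $W'$ of $W$ on which the identity holds \emph{everywhere} (\refL{LB1}, via the same factorization you use plus a Lebesgue-point argument and a rearrangement), and then runs a short diagonal argument (\refL{LC1}: set all $x_i$ equal, then all but one equal); or it symmetrizes over $\fsf$ and invokes Gottlieb's incidence-matrix lemma (\refL{LC1ae}/\refL{Lassym}). You instead stay entirely in the \aex{} regime: you extract the rank-one symmetric factorization $W(x,y)=h(x)h(y)$ \aex{} directly from the identity with $x'=(x_3,\dots,x_\ff)$ frozen, feed it back into $\psifw=p\eff$ to get $\prod_i h(x_i)^{d_F(i)}=p\eff$ \aex, and then vary a single coordinate with positive degree to force $h$ \aex{} constant, hence $W=c^2=p$ \aex. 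This is a correct and arguably cleaner alternative: it circumvents the measure-theoretic elimination problem that the paper treats as a genuine obstacle (\refSS{SSnull} and \refApp{Appa}) as well as the Gottlieb-type combinatorics, at the cost of exploiting the multiplicative structure of $\psifw$ more aggressively. The one thing worth being explicit about, which you only implicitly use, is that the factorization $W(x,y)=f_1(x)f_2(y)$ obtained for a fixed admissible $x'$ is unique up to a positive scalar, so the symmetrization to $h(x)h(y)$ does not depend on that choice; this is a one-line Fubini argument. Note, though, that this shortcut is rather specific to $\psifw$ (a pure monomial): for $\tpsifw$, $\psiqfw$, or $\tpsiqfw$ the multiplicative structure is lost, which is exactly why the paper goes to the trouble of proving the general \refL{LD} and \refT{TD}.
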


\begin{remark}\label{RT1a}
  Since \eqref{t2} is the special case of \eqref{t1}
  with $U_1=\dots=U_\ff$, the 'if' direction of
  \refT{T1} is a corollary of \refT{T2}. The 'only
  if' direction does not follow immediately from \refT{T2},
  but it is straightforward to prove, either by the methods of
  \cite{SS:nni} or by our methods with graph limits, see \refS{SpfT1};
  hence the main interest is in the 'if' direction. (The same is true
  for the results below for the induced case.)
\end{remark}

\begin{remark}\label{RT1}
Theorems \refand{T2}{T1} obviously fail when $e(F)=0$, since then
\eqref{t2} and \eqref{t1} hold trivially and
  the assumptions give no information on $G_n$. They fail also if
  $p=0$;
for example, if $F=K_3$ and $G_n$ is the complete
  bipartite graph $K_{n,n}$.
\end{remark}

\citet{Shapira} and \citet{ShapiraY} consider also an
intermediate version where a 
symmetric form of \eqref{t1} is used, summing over all
permutations of $(\uuf)$ (or, equivalently, over all
labellings of $F$); moreover, $\uuf$ are supposed to be
disjoint and of the same size. It is shown directly 
in \cite{Shapira} that this is equivalent to \eqref{t2}.
See also Subsections \refand{SSdisjoint}{SSsamesize}.

The main result of \citet{Shapira} is that \refT{T2} remains valid even
if we only require \eqref{t2} for $U$ of size $\gc\gnabs$ with $\gc=1/(\ff+1)$.
(It is a simple consequence that any smaller positive $\gc$ will also do.)
This was improved by \citet{Yuster}, who proved this for any $\gc\in(0,1)$.
We state this, 
and the corresponding result for a sequence of (disjoint) subsets.

\begin{theorem}[\citet{Yuster}]
  \label{T2c}
Let $\gnq$, $F$ and $p$ be as in \refT{T2}, and let $0<\gc<1$. Then
$\gnq$ is \pqr{} if and only if \eqref{t2} holds for all subsets $U$
of $V(G_n)$ with $|U|=\floor{\gc\gnabs}$.
\end{theorem}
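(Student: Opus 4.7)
The plan is to adapt the paper's three-step paradigm to this weaker hypothesis. The 'only if' direction is immediate from \refT{T2}, so I focus on 'if'. By compactness of the graph limit space, pass to a subsequence with $G_n\to W$ for some graphon $W$; the goal reduces to showing $W\equiv p$ a.e. Step~(i) is essentially the same as for \refT{T2}: given measurable $S\subseteq\oi$ with $|S|=\gc$, choose $U_n\subseteq V(G_n)$ with $|U_n|=\floor{\gc\gnabs}$ whose associated $U_n'\subseteq\oi$ approximates $S$ in measure (after the rearrangement implementing the $\dcut$-convergence), and combine \eqref{nfgu} with the hypothesis and the cut-norm control of $\psifx{W_{G_n}}$ by $\psifw$ to conclude
\begin{equation*}
\int_{S^\ff}\psifw = p\eff \gc^\ff \qquad \text{for every } S\subseteq\oi \text{ with } |S|=\gc.
\end{equation*}

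Step~(ii) is the main new ingredient. I would first symmetrize by setting $\tpsifw(\xxf)\=\frac{1}{\ff!}\sum_{\gs\in\fsf}\psifw(x_{\gs(1)},\dots,x_{\gs(\ff)})$ and $\phi\=\tpsifw-p\eff$; since $S^\ff$ is permutation-symmetric, $\int_{S^\ff}\phi=0$ for every $|S|=\gc$, and $\phi$ is symmetric in its $\ff$ arguments and bounded in $L^\infty$. The key claim, which I prove by induction on $\ff$, is that any such $\phi$ must vanish a.e. For the base case $\ff=1$, comparing $\int_{T\cup A}\phi=\int_{T\cup B}\phi=0$ for pairwise disjoint $T,A,B$ with $|T|=\gc-\eps$ and $|A|=|B|=\eps$ gives $\int_A\phi=\int_B\phi$; Lebesgue differentiation as $\eps\to 0$ forces $\phi$ to be a.e.\ constant, and the integral condition pins the constant at zero.

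For the inductive step $\ff\ge 2$, the same perturbation combined with the symmetry of $\phi$ yields
\begin{equation*}
\sum_{k=1}^{\ff}\binom{\ff}{k}\biggsqpar{\int_{A^k\times T^{\ff-k}}\phi - \int_{B^k\times T^{\ff-k}}\phi} = 0.
\end{equation*}
Dividing by $\eps$ and sending $\eps\to 0$ with $A,B$ shrunk to balls around points $x_0,y_0$ and $T$ tending to a fixed $T^*$ of measure $\gc$ annihilates the $k\ge 2$ contributions (each of order $\eps^{k-1}$) and leaves the identity $g_{T^*}(x_0)=g_{T^*}(y_0)$, where $g_{T^*}(x)\=\int_{(T^*)^{\ff-1}}\phi(x,y_2,\dots,y_\ff)\dd y$; analogous perturbations handle both positions of $y_0$ (inside or outside $T^*$), so $g_{T^*}$ is a.e.\ constant on $\oi$, and the hypothesis $\int_{(T^*)^\ff}\phi=0$ forces this constant to be zero. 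Hence for a.e.\ $x$, the symmetric $(\ff-1)$-variate function $\phi(x,\cdot)$ has vanishing integral over every $(T^*)^{\ff-1}$ with $|T^*|=\gc$; the induction hypothesis gives $\phi(x,\cdot)=0$ a.e., and Fubini completes the step.

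Since $\tpsifw=p\eff$ a.e., we obtain $\int_{S^\ff}\psifw=\int_{S^\ff}\tpsifw=p\eff|S|^\ff$ for \emph{every} measurable $S\subseteq\oi$, which is precisely the full integral condition appearing in the proof of \refT{T2}; its remaining analytic and algebraic steps (passage to the pointwise condition $\psifw=p\eff$ a.e., and thence to $W\equiv p$) can therefore be borrowed verbatim. The main obstacle is the $\ff\ge 2$ induction in~(ii): making the leading-order analysis rigorous requires care in handling the dependence of $g_{T^*}$ on $T^*$ (e.g., by restricting first to a countable family of $T^*$'s dense in the symmetric-difference metric and using $L^1$-continuity to extend to all measure-$\gc$ sets), and in isolating the $k=1$ contribution uniformly over the relevant Lebesgue points.
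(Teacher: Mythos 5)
Your proposal is correct, follows the paper's three-step paradigm, and reduces the theorem to the same core analysis lemma as the paper (Lemma~\ref{L4}: if $\int_{A^m} f = 0$ for all $|A|=\gc$ then $\tf = 0$ a.e.), which is then combined with Lemmas~\ref{LA2c} and~\ref{LC2ae} as in the paper. Where you diverge is in the \emph{proof} of that analysis lemma. The paper proves Lemma~\ref{L4} by first extending the vanishing property to sets of measure $r\gc$ (rational $r\ge1$) via a random-partition averaging argument (choose $N$ of $M$ equal-measure pieces and take an expectation), and then reduces to the disjoint-sets case of Lemma~\ref{L1}/\ref{L3} via the indicator--polynomial expansion trick (adding an auxiliary set $A_0$ of measure $\gc$ to guarantee the union has measure $\ge\gc$). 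You instead prove the lemma by a perturbation-and-Lebesgue-differentiation argument with induction on the arity $\ff$: comparing $\int_{(T\cup A)^\ff}\phi$ with $\int_{(T\cup B)^\ff}\phi$, expanding by multilinearity and symmetry, dividing by $\eps$ and letting $\eps\to0$ so that only the $k=1$ term survives, thereby reducing to the $(\ff-1)$-variate case. This is more ``first-principles'' but more delicate — notably, you correctly flag the quantifier exchange needed to pass from ``for each $T^*$, $g_{T^*}=0$ a.e.\ $x$'' to ``for a.e.\ $x$, $g_{T^*}(x)=0$ for all $T^*$'' (countable family plus the Lipschitz estimate $|g_{T_1}(x)-g_{T_2}(x)|\le(\ff-1)\|\phi\|_\infty\gl(T_1\setdiff T_2)$), and the need to treat Lebesgue points lying inside as well as outside $T^*$ by suitable perturbations. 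The paper's averaging/polynomial route avoids both of these by working with integrals rather than pointwise limits, and reuses Lemmas~\ref{L1}--\ref{L3} directly. One minor slip: in your final paragraph the pointwise conclusion should be $\tpsifw=p^{e(F)}$ a.e.\ (the symmetrization), not $\psifw=p^{e(F)}$ a.e.; this is exactly the hypothesis of Lemma~\ref{LC2ae} and what your $\phi=0$ gives, so the slip is cosmetic.
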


\begin{theorem}
  \label{T1c}
Let $\gnq$, $F$ and $p$ be as in \refT{T1}, and let $0<\gc<1$. Then
$\gnq$ is \pqr{} if and only if \eqref{t1} holds for all subsets $\uuf$
of $V(G_n)$ with $|U_i|=\floor{\gc\gnabs}$.

If $\gc<1/\ff$, it is enough to assume \eqref{t1} for $\uuf$ that
further are disjoint.
\end{theorem}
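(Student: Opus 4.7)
The plan is to follow the three-step template of \refS{Sintro}. The \emph{only if} direction is immediate from \refT{T1}, which supplies \eqref{t1} for any choice of $U_i$, hence in particular for $|U_i|=\floor{\gc\gnabs}$ (and for disjoint such $U_i$ when $\gc<1/\ff$).

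For the \emph{if} direction, by the compactness of graph limits it suffices to show that every subsequence of $\gnq$ has a further subsequence converging to the constant graphon $p$. Passing to a subsequence, assume $G_n\to W$; the goal is $W=p$ a.e. In step (i), formula \eqref{nfgu} together with the standard approximation of measurable subsets of $\oi$ by unions of the intervals $I_j$ (after a suitable rearrangement of $W$) converts the hypothesis into
\begin{equation}\label{pln-int}
\int_{A_1\times\dots\times A_\ff}\psifw = p^{e(F)}\gc^\ff
\end{equation}
for every measurable $A_1,\dots,A_\ff\subseteq\oi$ with $|A_i|=\gc$ (and pairwise disjoint in the disjoint variant).

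In step (ii) I want to upgrade \eqref{pln-int} to
\begin{equation}\label{pln-int2}
\int_{A_1\times\dots\times A_\ff}\psifw = p^{e(F)}\prod_{i=1}^{\ff}|A_i|
\end{equation}
for arbitrary measurable $A_i$; this is precisely the input to step (ii) in the proof of \refT{T1}. I peel one coordinate at a time: fix $A_2,\dots,A_\ff$ of measure $\gc$ and set $g(x)=\int_{A_2\times\dots\times A_\ff}\psifw(x,x_2,\dots,x_\ff)\dd x_2\cdots\dd x_\ff$. Then $\int_{A_1}g=p^{e(F)}\gc^\ff$ for every $A_1$ of measure $\gc$; taking symmetric differences of two such $A_1$ yields $\int_B g=\int_{B'}g$ for any disjoint $B,B'$ of equal measure at most $\min(\gc,1-\gc)$, and Lebesgue differentiation forces $g\equiv p^{e(F)}\gc^{\ff-1}$ a.e. Iterating this peel releases each size constraint in turn and yields \eqref{pln-int2}. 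Step (iii) is then taken verbatim from \refT{T1}: \eqref{pln-int2} forces $\psifw\equiv p^{e(F)}$ a.e., hence $W\equiv p$ a.e.

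The main obstacle is adapting step (ii) to the disjoint case with $\gc<1/\ff$. Having fixed disjoint $A_2,\dots,A_\ff$ of measure $\gc$, one may vary $A_1$ only inside the complement $C=\oi\setminus\bigcup_{i\ge 2}A_i$, which has measure $1-(\ff-1)\gc>\gc$; the peeling argument therefore only yields that $g$ is a.e.\ constant on $C$. To obtain constancy a.e.\ on all of $\oi$, I would vary $(A_2,\dots,A_\ff)$ over different disjoint $\gc$-tuples: any point of $\oi$ lies in the complement $C$ for a large family of such choices (since $(\ff-1)\gc<1-\gc$), and the constants coming from any two choices must agree wherever both are defined, as both equal $p^{e(F)}\gc^{\ff-1}$. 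Patching these partial statements together requires a careful Fubini and null-set argument of the flavor treated in the paper's appendix, after which the subsequent peel over $x_2,\dots,x_\ff$ proceeds as in the non-disjoint case.
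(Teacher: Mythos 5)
Your overall template matches the paper's: pass to a convergent subsequence, translate the hypothesis into an integral identity for $\psifw$ over products of measure-$\gc$ sets, upgrade to a pointwise identity, and invoke the algebraic step from \refT{T1}. For the non-disjoint variant your coordinate-by-coordinate peel (the symmetric-difference trick to remove the exact-measure constraint, one variable at a time) is a correct and essentially equivalent reorganization of the induction the paper carries out in \refL{L3}(i), and your step (iii) is indeed the one already established in \refS{SpfT1}.

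The disjoint case, however, has a genuine gap. After fixing disjoint $A_2,\dots,A_\ff$ of measure $\gc$, the function $g(x_1)=\int_{A_2\times\cdots\times A_\ff}\psifw(x_1,\cdot)$ is shown a.e.\ constant only on $C=\oi\setminus\bigcup_{i\ge2}A_i$; there is no meaningful disjointness statement for $x_1\notin C$, and more importantly $g$ depends on the tuple $(A_2,\dots,A_\ff)$, so ``patching'' partial constancy statements coming from different tuples does not produce a single function constant on all of $\oi$. The fix is that you do not need it: one should keep the disjointness constraint throughout the peel and only relax the \emph{measure} constraint from $=\gc$ to $\le\gc$ (this is exactly the content of claim (ii) in the paper's proof of \refL{L3}, with the inductive bookkeeping $m\gc<\gl(B)$ ensuring there is always room for the sets). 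Once one has $\int_{A_1\times\cdots\times A_\ff}\psifw=p^{e(F)}\prod_i\gl(A_i)$ for all \emph{disjoint} tuples of small measure, \refL{L1} (whose statement explicitly allows restricting to disjoint sets of measure $\eps_n\to0$) delivers $\psifw=p^{e(F)}$ a.e.\ directly; no extension to overlapping $A_i$ is required. Your attempted route tries to prove something both stronger than needed and not well-posed.

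Finally, you attribute the disjoint-case difficulty to ``a careful Fubini and null-set argument of the flavor treated in the paper's appendix''; this is a misattribution. The appendix (\refL{LD}, \refT{TD}, \refC{CD}) addresses a different issue, namely upgrading ``$\Phi_W=\gax$ a.e.'' to a statement valid also on the diagonal so that one may set several $x_i$ equal in step (iii); it plays no role in the disjointness or fixed-size restrictions of step (ii), which are handled entirely by the elementary real-analysis argument of \refL{L3}. Also worth noting: your step (i) glosses over the fact that the random sets built from the intervals $I''_{nj}$ have only approximately measure $\gc$; the paper's \refL{LA1c} supplies a Chebyshev concentration estimate together with the modification bound \eqref{adiff} to control this, and some such argument is needed.
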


For $F=K_2$, \refT{T2c} with $\gc=1/2$ is another of the original
characterizations by \citet{ChungGW:quasi}, and the generalization to
arbitrary $\gc\in(0,1)$ is stated in \citet{ChungG}. Another related
characterization from \cite{ChungG} is discussed in
\refS{Scut}.

Turning to induced copies of $F$, the situation
is much more complicated, as discussed in \citet{SS:ind}.
First, the expected number of induced labelled copies of $F$ in a random
graph $G(n,p)$ is $\gb_F(p)n^\ff+o(n^\ff)$, with
\begin{equation}\label{beta}
  \gb_F(p)\=p\eff(1-p)^{e(\cp F)}
=p\eff(1-p)^{\binom{\ff}2-e(F)}.
\end{equation}
Hence, the condition corresponding to \eqref{t2} for induced
subgraphs is: 
For all subsets $U$ of $V(G_n)$,
\begin{equation}\label{tq}
  \Nq(F,G_n;U)=\gb_F(p)|U|^\ff+o\bigpar{|G_n|^\ff}.
\end{equation}
Indeed, as observed in \cite{SS:nni,SS:ind}, this holds for
every \pqr{} $(G_n)$, but the converse is generally
false.
One reason is that, provided $F$ is neither empty nor complete, then
$\gb_F(0)=\gb_F(1)=0$, and if
$p_F\=e(F)/\binom{\ff}2$ (the edge density in
$F$), then $\gb_F(p)$ increases on $[0,p_F]$ and
decreases on $[p_F,1]$. Hence, for every $p\neq p_F$,
there is another $\cpp$ such that $\gb_F(\cp
p)=\gb_F(p)$; we call $p$ and $\cpp$
\emph{conjugate}.
(For completeness, we let $\cpp\=p$ when $p=p_F$ or
when $F$ is empty or complete. Note also that $\cpp$
depends on $F$ as well as $p$.)
Obviously, a \cpqr{} sequence $(G_n)$ also satisfies
\eqref{tq}. Moreover, any combination of a \pqr{}
sequence and a \cpqr{} sequence will satisfy
\eqref{tq}. Hence the best we can hope for is the following.
We say that $(G_n)$ is \emph{\mppqr} if it is \pqr, \cpqr,
or a combination of two such sequences.

\begin{definition}\label{DHF}
Let $0\le p\le 1$.
We say that a graph $F$ is \emph{hereditary 
induced-forcing} ($\HFp$)
if every
$(G_n)$ that satisfies \eqref{tq} for all subsets
$U$ of $V(G_n)$ 
is \mppqr.
In this case we also write $F\in\HFp$ (thus regarding
$\HFp$ as a set of graphs).

We say that $F$ is $\HF$ (and write
$F\in\HF$) if $F$ is $\HFp$ for every
$p\in(0,1)$ (thus excluding the rather exceptional cases
$p=0$ and $p=1$).
\end{definition}

\begin{remark}\label{Rmixed}
  The definition of \mppqr{} is perhaps better stated in
  terms of graph limits.
Just as $(G_n)$ is \pqr{} if and only if $G_n\to
  p$, where $p$ stands for the graphon that is constant
  $p$,
$(G_n)$ is \mppqr{} if and only if the limit points of
  $(G_n)$ are contained in \set{p,\cpp}, \ie, if
  every convergent subsequence of $(G_n)$ converges to either
the graphon $p$ or the graphon $\cpp$.

In general we say that a sequence $(G_n)$, with $|G_n|\to\infty$ as always,
is \emph{mixed \qr} if the set of 
limit points is contained in \set{p:p\in\oi}, \ie,
if every convergent subsequence converges to a constant
graphon. (Equivalently, if every convergent subsequence is \qr).
\end{remark}

\begin{remark}
  \label{Rqrw}
Just as one talks about \qr{} properties of graphs, or more
properly of sequences $(G_n)$ of graphs, we say that a
property of graphons $W$ is \emph{\pqr} if it is
satisfied only by $W=p$ \aex, 
that it is \emph{\qr} if it is \pqr{} for some
$p\in\oi$, and that it is \emph{mixed
\qr} if it is satisfied only by graphons that are
\aex{} constant (for some set of accepted constants).
%, including the \qr{} case when the set consists of only one constant).
\end{remark}

\citet{SS:ind} gave a counter-example showing that the path
$P_3$ with 3 vertices is \emph{not} $\HF$. They also
showed that every regular $F$ (with $\ff\ge2$) is $\HF$, 
and conjectured that $P_3$ and its
complement $\cp P_3$ are the only graphs not in $\HF$.
This conjecture remains open. (The methods of the present paper do
not seem to help.)

\begin{remark}
  The cases $F$ empty or complete are exceptional and rather trivial. If $F$
  is complete graph $K_m$ ($m\ge2$), then
$\Nq(F,G_n;U)=N(F,G_n;U)$, and 
thus \eqref{tq} implies that $(G_n)$ is \pqr{}
by \refT{T2} (but not for $p=0$
  unless $m=2$, see \refR{RT1}).
By taking complements we see that the same holds for
for an empty graph $E_m$ ($m\ge2$) and $0\le p<1$.

In particular, $E_m,K_m\in\HF$ when $m\ge2$.
\end{remark}

In view of the fact that not all graphs are $\HF$, 
\citet{ShapiraY} gave the following substitute, which is an induced
version of \refT{T1}.

\begin{theorem}[\citet{ShapiraY}]
  \label{T3}
Suppose that $(G_n)$ is a sequence of graphs with $|G_n|\to\infty$.
Let $F$ be any fixed graph with $|F|>1$ and let $0<p<1$.
Then $(G_n)$ is \mppqr{} if and only if,
for all subsets $\uuf$ of\/ $V(G_n)$,
\begin{equation}\label{t3}
  \Nq(F,G_n;\uuf)
=p^{e(F)}(1-p)^{\binom{\ff}2-e(F)}\prodif|U_i|+o\bigpar{|G_n|^\ff}.
\end{equation}
Moreover, it suffices that \eqref{t3} holds for all sequences
$\uuf$ of  disjoint subsets of\/ $V(G_n)$ with
the same size, $|U_1|=\dots=|U_\ff|$.
\end{theorem}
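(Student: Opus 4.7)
The plan is to follow the three-step scheme (i)--(iii) from \refS{Sintro}, applied to an arbitrary convergent subsequence $G_{n_k}\to W$ (which exists by compactness). The \emph{only if} direction is immediate: if $(G_n)$ is \mppqr, every limit point $W$ equals, as a graphon, either the constant $p$ or $\cpp$, and in both cases $\psiqfw\equiv\gb_F(p)$ since $\gb_F(\cpp)=\gb_F(p)$ by the definition of conjugate exponents in \eqref{beta}; \eqref{nfguq} then gives \eqref{t3} uniformly in the $\uuf$.

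For the \emph{if} direction we must show that any limit point $W$ is \aex{} a constant in $\set{p,\cpp}$. \textbf{Step (i)} (combinatorial): Applying \eqref{nfguq} to $G_{n_k}$ and $\uuf\subseteq V(G_{n_k})$ with associated sets $\uuif\subseteq\oi$, the hypothesis \eqref{t3} becomes $\int_{\uuifx}\psiqfx{W_{G_{n_k}}}=\gb_F(p)\prodif|U'_i|+o(1)$. Passing to the limit (using $W_{G_{n_k}}\to W$) and approximating arbitrary measurable sets by finite unions of the intervals $I_j$, we obtain
\begin{equation*}
\int_{V_1\times\dots\times V_\ff}\psiqfw=\gb_F(p)\prodif|V_i|
\end{equation*}
for all measurable $V_1,\dots,V_\ff\subseteq\oi$. \textbf{Step (ii)} (analytic): Since the signed set function $A\mapsto\int_A(\psiqfw-\gb_F(p))$ on $\oi^\ff$ vanishes on all rectangles, either by a Dynkin argument or by direct application of the Lebesgue differentiation theorem (taking $V_i=[a_i,a_i+h]$ and letting $h\downto0$), we obtain the pointwise identity $\psiqfw(x_1,\dots,x_\ff)=\gb_F(p)$ \aex{} on $\oi^\ff$.

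\textbf{Step (iii)} (algebraic): We deduce that $W$ is \aex{} constant. Since $\gb_F(p)>0$ when $p\in(0,1)$, every factor of $\psiqfw$ is positive \aex, so $0<W<1$ \aex. Comparing the identity at $(x_1,x_2,\dots,x_\ff)$ and $(x_1',x_2,\dots,x_\ff)$ gives
\begin{equation*}
\prod_{\set{1,j}\in E(F)}\frac{W(x_1,x_j)}{W(x_1',x_j)}\prod_{\set{1,j}\notin E(F),\,j\ne1}\frac{1-W(x_1,x_j)}{1-W(x_1',x_j)}=1
\end{equation*}
\aex; since each factor depends on a single variable $x_j$, independently varying the $x_j$'s shows every factor is \aex{} constant in $x_j$. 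Picking an edge incident to vertex $1$ this yields $W(x,y)=h(x)\phi(y)$ \aex, and symmetry $W(x,y)=W(y,x)$ refines this to $W(x,y)=\alpha\phi(x)\phi(y)$; the analogous non-edge computation, when $F$ is not complete, gives $1-W(x,y)=\tilde\alpha\tilde\phi(x)\tilde\phi(y)$. A short algebraic computation with $W+(1-W)=1$ then forces $\phi$ to be \aex{} constant, so $W\equiv c$ \aex{} with $\gb_F(c)=\gb_F(p)$, whence $c\in\set{p,\cpp}$. The cases where $F$ is complete (respectively empty) are handled by the edge-only (respectively non-edge-only) form of the argument.

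For the ``moreover'' assertion, the hypothesis only yields the integral identity for products $V_1\times\dots\times V_\ff$ with the $V_i$ disjoint and of equal measure. The main obstacle is therefore to promote this restricted identity to the full one appearing at the end of step (i), before step (ii) can be applied. This will be handled by a box-subdivision and inclusion--exclusion argument parallel to the reductions carried out for the disjoint- and same-size versions in the analogous results on subgraph (rather than induced-subgraph) counts elsewhere in the paper.
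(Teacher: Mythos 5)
Your overall architecture (compactness, then the three-step translation, then algebra on the graphon) matches the paper's, and steps (i)--(ii) are essentially the paper's Lemma~\ref{LAi} combined with the analysis lemmas; the only-if direction and the use of $\gb_F(\cpp)=\gb_F(p)$ are also correct. The genuinely different piece is step (iii), and it is worth comparing. The paper's Lemma~\ref{LI} sets all $x_i$ equal to read off $W(x,x)\in\set{p,\cpp}$, then splits into regular and non-regular $F$; because this uses diagonal values $W(x,x)$ --- a null set --- the paper must first upgrade the \aex{} identity to an everywhere identity, which is exactly what the appendix machinery (\refL{LD}, \refT{TD}, \refC{CD}) is for, and this is the part the author flags as ``somewhat surprisingly'' technical. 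Your argument varies a single coordinate to get a product of single-variable ratios equal to $1$ \aex; a Fubini/separation-of-variables argument then shows each factor is \aex{} constant, which (together with symmetry of $W$) factors $W=\ga\phi(x)\phi(y)$ and $1-W=\tilde\ga\tilde\phi(x)\tilde\phi(y)$, and the identity $W+(1-W)=1$ forces $\phi,\tilde\phi$ constant. Because all comparisons are at off-diagonal $\ff$-tuples and the exceptional null set can be dodged at every stage, this argument works directly in the \aex{} regime --- so for this particular theorem you bypass the appendix entirely. That is a genuine simplification.

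Two gaps remain. First, the edge-case handling of complete and empty $F$ is not actually covered by what you wrote: the ``short algebraic computation'' uses $W+(1-W)=1$, which requires \emph{both} the edge and the non-edge factorizations. For $F=K_m$ you only get $W=\ga\phi(x)\phi(y)$, and you must then substitute back into $\prod_{i<j}W(x_i,x_j)=p^{\binom{\ff}{2}}$ to see that $\prod_i\phi(x_i)^{\ff-1}$ is constant, whence $\phi$ is constant; similarly for empty $F$. You should also note explicitly that if $F$ is neither complete nor empty, one may relabel so that vertex $1$ is incident to both an edge and a non-edge (which is possible precisely under that hypothesis), since otherwise ``picking an edge incident to vertex $1$'' and ``the analogous non-edge computation'' may not both be available for the same vertex. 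Second, the ``moreover'' clause is only gestured at; the framing ``promote this restricted identity to the full one'' is not quite the right picture. One does not need to recover the integral identity over arbitrary rectangles: the paper's \refL{L1} is already stated for disjoint test sets of equal measure, and the disjoint-plus-fixed-size reduction for subsets of $V(G_n)$ is done in \refL{LA1d} and \refL{LA1c} via a coupled randomized rounding of the $J_{ij}$; \refL{LAi} simply records that this carries over verbatim to $\psiqfw$. If you cite those and note that ``disjoint, equal, variable size'' implies ``disjoint, size $\floor{\gc\gnabs}$'' for every fixed $\gc<1/\ff$, that part of the argument closes.
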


To show the flexibility with which our method combines different
conditions, we also show
that it suffices to consider subsets
of a given size for induced subgraph counts too,
in analogy with Theorems \refand{T2c}{T1c}.

\begin{theorem}
  \label{T3x}
In \refT{T3}, 
it suffices that \eqref{t3} holds for all sequences
$\uuf$ of subsets of\/ $V(G_n)$ with
$|U_i|=\floor{\gc\gnabs}$, for any fixed $\gc$ with
$0<\gc<1$.
Alternatively, if $0<\ga<1/\ff$, it suffices that
\eqref{t3} holds for all such sequences of  disjoint $\uuf$.
\end{theorem}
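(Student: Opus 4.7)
The strategy follows the combinatorial/analytic/algebraic template used for \refT{T3}, modifying only the analytic step to handle the restriction $|U_i|=\floor{\gc\gnabs}$. By compactness, pass to a subsequence with $G_n\to W$ for some graphon $W$. Approximating any measurable $V_i\subseteq\oi$ with $|V_i|=\gc$ by a step set $\bigcup_{j\in U_i}I_j$ with $|U_i|=\floor{\gc\gnabs}$, and using \eqref{nfguq} together with convergence of $W_{G_n}$ to $W$ in the cut metric, the hypothesis translates into
\begin{equation*}
\int_{V_1\times\dots\times V_\ff}\psiqfw=\gbfp\,\gc^\ff
\end{equation*}
for every $V_1,\dots,V_\ff\subseteq\oi$ with $|V_i|=\gc$ (and pairwise disjoint in the second variant).

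The analytic step extends this to all measurable tuples. Write $g\=\psiqfw-\gbfp$; the hypothesis now reads $\int_{V_1\times\dots\times V_\ff}g=0$ on the restricted family. Fix $V_2,\dots,V_\ff$ (of measure $\gc$, pairwise disjoint in the disjoint variant) and set $Z\=V_2\cup\dots\cup V_\ff$. The map $\mu\colon A\mapsto\int_{A\times V_2\times\dots\times V_\ff}g$ is a signed measure on $\oi$ vanishing on every $A$ of measure $\gc$ (with $A\cap Z=\emptyset$ in the disjoint variant). Given two small subsets $A,B\subseteq\oi$ of equal measure, pick $V_1$ of measure $\gc$ with $A\subseteq V_1$, $V_1\cap B=\emptyset$ (and $V_1\cap Z=\emptyset$); then $V'_1\=(V_1\setminus A)\cup B$ shares these properties, so $\mu(V_1)=\mu(V'_1)=0$ forces $\mu(A)=\mu(B)$. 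Hence $\mu$ is a scalar multiple of Lebesgue measure, and the scalar is $0$ since $\mu$ vanishes on every set of measure $\gc$; therefore $\int_{V_1\times V_2\times\dots\times V_\ff}g=0$ for every measurable $V_1$ (disjoint from $Z$ in the disjoint variant). Iterating coordinate by coordinate, with multilinear splitting of any oversized $V_j$ into disjoint pieces of measure $\le 1-\ff\gc$ (the slack $1-\ff\gc>0$ in the disjoint variant coming precisely from $\gc<1/\ff$), and finally using that the generalized diagonal of $\oi^\ff$ has Lebesgue measure zero to drop pairwise disjointness, one obtains $\int_{V_1\times\dots\times V_\ff}g=0$ for all measurable $V_i$.

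The resulting equality $\int_{V_1\times\dots\times V_\ff}\psiqfw=\gbfp\prodif|V_i|$ is exactly the graphon form of hypothesis \eqref{t3} of \refT{T3}, so the remainder of the proof of \refT{T3} forces $W$ to agree a.e.\ with $p$ or $\cpp$; as this holds for every subsequential limit, \refR{Rmixed} gives that $\gnq$ is \mppqr. The main obstacle is the disjoint iteration: preserving pairwise disjointness across $\ff$ successive swap arguments while splitting arbitrary sets into sufficiently small disjoint pieces. The inequality $\gc<1/\ff$ is sharp for this method since at $\gc=1/\ff$ there is no slack left to carry out any swap.
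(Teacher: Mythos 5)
Your proposal follows essentially the same route as the paper: pass to a subsequential limit graphon by compactness, translate the size-constrained count hypothesis into an integral condition via cut-metric convergence (the paper's Lemma~\ref{LAi}, which reduces to the randomization argument in Lemma~\ref{LA1c}), extend from sets of measure exactly $\gc$ to all measurable sets by a set-swap argument iterated coordinate-by-coordinate (this is precisely the content of the paper's Lemma~\ref{L3}, which you reprove in a signed-measure/Radon--Nikodym phrasing rather than the paper's more elementary ``$N$ disjoint copies plus Lebesgue points'' version), and then invoke the algebraic step from the proof of Theorem~\ref{T3}. The reasoning is correct; the differences are presentational, and the sketchier steps (approximating $V_i$ by step sets and bookkeeping measure slack through the iteration) are exactly what Lemmas~\ref{LA1c} and~\ref{L3} make precise.
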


\begin{theorem}
  \label{T3y}
%Suppose that $(G_n)$ is a sequence of graphs with $|G_n|\to\infty$.
Let $0<\gc<1$ and $0\le p\le 1$, and
let $F$ be a fixed graph with $F\in\HFp$. 
Then every sequence $(G_n)$ with
$|G_n|\to\infty$ such that 
\eqref{tq} holds for all 
subsets $U$ of\/ $V(G_n)$ with
$|U|=\floor{\gc\gnabs}$ is \mppqr.
\end{theorem}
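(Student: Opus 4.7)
The plan is as follows: by compactness of graph limits, it suffices to show that every subsequential limit $W$ of $(G_n)$ is (a.e.) the constant graphon $p$ or $\cpp$; so we may assume $G_n\to W$ and follow the standard three-step structure (i)--(iii) of the paper.

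For step (i), I would translate the hypothesis to a graphon condition: given any measurable $S\subseteq\oi$ with $|S|=\gc$, approximate $S$ by a subset $U_n\subseteq V(G_n)$ of size $\floor{\gc|G_n|}$ whose associated block $U'_n\=\bigcup_{i\in U_n}I_{ni}$ converges to $S$ in measure. Combining the hypothesis with \eqref{nfguq} and $G_n\to W$ then yields
\begin{equation*}
\int_{S^\ff}\psiqfw=\gbfp\gc^\ff\qquad\text{whenever }|S|=\gc.
\end{equation*}

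The crucial step (ii) is to upgrade this to $\Phi(T)\=\int_{T^\ff}\psiqfw=\gbfp|T|^\ff$ for every measurable $T\subseteq\oi$. For $|T|\le\gc$ I would pick an auxiliary $A$ disjoint from $T$ with $|A|=\gc-|T|$; the disjoint-union expansion
\begin{equation*}
(T\cup A)^\ff=\bigsqcup_{\iota\colon[\ff]\to\{T,A\}}\prod_{i=1}^{\ff}X_{\iota(i)},\qquad X_T=T,\ X_A=A,
\end{equation*}
rewrites the identity $\Phi(T\cup A)=\gbfp\gc^\ff$ as a polynomial relation among $\Phi(T)$, $\Phi(A)$ and the $2^{\ff}-2$ mixed integrals $\int_{\prod X_{\iota(i)}}\psiqfw$. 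Varying $A$ over measure-$(\gc-|T|)$ subsets of $\oi\setminus T$, and in particular letting the symmetric difference of two choices shrink to a pair of points so that Lebesgue differentiation becomes available, should produce enough linear constraints to isolate the leading term $\Phi(T)$; the case $|T|>\gc$ is handled symmetrically by partitioning $T$ into pieces of measure $\le\gc$.

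Finally, for step (iii), I would apply $F\in\HFp$. By the uniform (in $U$) convergence of induced counts implied by $G_n\to W$ in the cut metric, the identity of step (ii) gives
\begin{equation*}
\Nq(F,G_n;U)=\gbfp|U|^{\ff}+o\bigpar{|G_n|^{\ff}}
\end{equation*}
uniformly over \emph{all} subsets $U\subseteq V(G_n)$; i.e., \eqref{tq} holds for every $U$. Since $F\in\HFp$, $(G_n)$ is \mppqr, so $W$ is (a.e.) $p$ or $\cpp$, as required. The main obstacle is step (ii): the $2^{\ff}$ terms in the expansion depend in complicated ways on the chosen partition, and some bookkeeping is required to verify that the available relations really do determine $\Phi(T)$ purely from the hypothesis; the other two steps are minor variants of the general machinery already used in the paper for Theorems~\ref{T3} and~\ref{T3x}.
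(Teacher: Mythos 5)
Your three-step outline tracks the paper's structure, and steps (i) and (iii) are essentially correct. Step (i) is precisely Lemma~\ref{LAis}\ref{LAisU}$\iff$\ref{LAisA} with its fixed-size addendum. Step (iii) is a legitimate shortcut: once the integral identity $\int_{T^\ff}\psiqfw=\gbfp\gl(T)^\ff$ holds for \emph{all} measurable $T$, Lemma~\ref{LAis}\ref{LAisA}$\implies$\ref{LAisU} (in its unconstrained form) recovers \eqref{tq} for \emph{all} subsets $U$, and the definition of $\HFp$ does the rest; the paper instead plugs the a.e.\ pointwise condition into Theorem~\ref{Tind}\ref{Tindhf}$\implies$\ref{Tindpsiae}, which amounts to the same thing.

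But step (ii) is where the real content lives, and what you have sketched does not work as described. The leading term $\Phi(T)=\int_{T^\ff}\psiqfw$ is independent of the auxiliary set $A$, and the total $\gbfp\gc^\ff$ is the same for every admissible $A$; so comparing two choices of $A$, or letting their symmetric difference shrink to points and applying Lebesgue differentiation, produces relations among the \emph{mixed} integrals in which $\Phi(T)$ cancels identically. Differentiation gives pointwise information about the terms that involve $A$, not the quantity you want to isolate. The paper's Lemma~\ref{L4} uses a genuinely different two-part argument: (a) first extend the vanishing from measure exactly $\gc$ to measure $r\gc$ for any rational $r\ge1$, by partitioning such a set into $M$ equal pieces, choosing $N$ of them uniformly at random so that the union has measure $\gc$, and taking expectations (the error from coinciding indices is controlled by integrability of $f$ as $N\to\infty$); then (b) with this extension in hand, run the $\xi$-indicator expansion of Lemma~\ref{L2}, but with an \emph{extra} set $A_0$ of measure $\gc$ attached so that every union $A_0\cup\bigcup_{\xi_i=1}A_i$ has admissible measure $\ge\gc$; the coefficient of $\xi_1\dotsm\xi_m$ (which excludes $\xi_0$) isolates $\int_{A_1\times\dots\times A_m}\tf$. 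Writing this off as ``bookkeeping'' underestimates it: Lemma~\ref{L4} is a new analytic ingredient (the single-set, fixed-size analogue of Lemma~\ref{L1}), and it is \emph{not} what is used for Theorems~\ref{T3} or~\ref{T3x}, which rely on Lemma~\ref{L3} instead since there the $U_i$ are allowed to differ. Once Lemma~\ref{L4}, packaged into the addendum to Lemma~\ref{LAis}, is available, the paper's proof of Theorem~\ref{T3y} is a two-line citation.
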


\begin{remark}
Theorems \refand{T1c}{T3x} fail for disjoint sets
$\uuf$ in the limiting case $\gc=1/\ff$, at least
for $F=K_2$, see \refS{Scut} and \refR{Rl3}.
We leave it as an open problem to investigate this case for other
graphs $F$.
\end{remark}
 
\section{Graph limit proof of \refT{T1}}\label{SpfT1}

We give proofs of the theorems above using graph limits; the reader
should compare these to the combinatorial proofs in
\cite{SS:nni,SS:ind,Shapira,ShapiraY,Yuster} using the \SRL.
In order to exhibit the main ideas clearly, we begin in this section
with the simplest case and
give a detailed proof of \refT{T1}. In the
following sections we will give the minor modifications needed for the
other results, treating the additional complications one by one.

The first step is to recall that the space of graphs and graph limits
is compact; thus, every sequence has a convergent subsequence
\cite{BCLSV1}. Hence, if $(G_n)$ is not \pqr, we can
select a subsequence (which we also denote by $(G_n)$), such
that $G_n\to W$ for some graphon $W$ that is not
equivalent to the constant graphon $p$, which simply means that
$W\neq p$ on a set of positive measure.

Hence, in order to prove \refT{T1}, it suffices to assume that further
$G_n\to W$ for some graphon $W$, and then prove that $W=p$ a.e.

\subsection{Translating to graphons}\label{SStrans}

In this subsection we use the graph limit theory in \cite{BCLSV1}
to translate the property \eqref{t1} to graph limits.

We begin with an easy consequences of 
Lebesgue's differentiation theorem; for future reference we state it
as a (well-known) lemma. (See \refL{L3} below for a stronger version.)
We let $\gl$ denote Lebesgue measure (in one or several dimensions).

\begin{lemma}
  \label{L1}
Suppose that $f:\oi^m\to\bbR$ is an integrable function
such that $\intaamx f=0$ for all sequences $\aam$ of
disjoint measurable subsets of\/ $\oi$. Then $f=0$ \aex

Moreover, it is enough to consider $\aam$ with
$\gl(A_1)=\dots=\gl(A_m)$; 
we may even further impose that $\gl(A_k)\in \set{\eps_1,\eps_2,\dots}$ for
any given sequence $\eps_n\to0$.
\end{lemma}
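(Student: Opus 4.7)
The plan is to reduce the statement to Lebesgue's differentiation theorem on $\bbR^m$, exploiting the fact that almost every point of $\oi^m$ has pairwise distinct coordinates. At such a point one can pack small disjoint intervals of any prescribed length around each coordinate, turning the integral hypothesis into a vanishing cube average.

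Concretely, I would first observe that the ``diagonal'' set $\bigcup_{i\neq j}\set{(x_1,\dots,x_m)\in\oi^m : x_i=x_j}$ is a Lebesgue null set in $\oi^m$, and so is the set of non-Lebesgue points of $f$ (with respect to the differentiation basis of axis-aligned cubes centered at the point). Hence almost every $(x_1,\dots,x_m)\in\oi^m$ is simultaneously a Lebesgue point of $f$ and has pairwise distinct coordinates, all lying in $\oio$.

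Now fix such a point and an arbitrary sequence $\eps_n\downto 0$. For all sufficiently large $n$ the intervals $A_{n,i}\=[x_i-\eps_n/2,\,x_i+\eps_n/2]$, $i=1,\dots,m$, are contained in $\oi$, pairwise disjoint, and each has measure exactly $\eps_n$. The hypothesis then gives
\begin{equation*}
\int_{A_{n,1}\times\dots\times A_{n,m}} f = 0.
\end{equation*}
Dividing by $\eps_n^m$, the left side is $0$ while the right side converges to $f(x_1,\dots,x_m)$ as $n\to\infty$ by Lebesgue differentiation on shrinking cubes. Thus $f(x_1,\dots,x_m)=0$ for almost every $(x_1,\dots,x_m)$, and $f=0$ \aex

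This single argument already yields the ``moreover'' clauses, since the $A_{n,i}$ were constructed with a common prescribed measure $\eps_n$ drawn from an arbitrary sequence tending to zero. There is no real obstacle beyond invoking Lebesgue differentiation; the only subtlety is ensuring the disjointness of the $A_{n,i}$, which is immediate once $(x_1,\dots,x_m)$ has distinct coordinates lying strictly in the interior of $\oi^m$.
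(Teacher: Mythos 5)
Your proof is correct and follows essentially the same argument as the paper: pick a Lebesgue point with pairwise distinct coordinates in $\oio$, surround each coordinate with a short interval so that the product is a small cube on which the integral vanishes by hypothesis, and invoke the Lebesgue differentiation theorem. The paper's version is slightly terser but relies on the same observations, including (implicitly) that the common side length can be taken from any null sequence.
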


\begin{proof}
  For any distinct $\xxm\in(0,1)$ and any sufficiently
  small $\eps>0$ we take
  $A_i=(x_i-\eps,x_i+\eps)$ and find
  \begin{equation*}
(2\eps)^ {-m} \int_{|y_i-x_i|<\eps,\,i=1,\dots,m}  f(\yym) 
= (2\eps)^ {-m} \int_{\aamx}  f=0.
  \end{equation*}
By Lebesgue's differentiation theorem,
see \eg{} \citet[\S1.8]{Stein},
the \lhs{} converges to $f(\xxm)$ as
$\eps\to0$ for \aex{} $\xxm$.
\end{proof}

We can now easily translate the condition \eqref{t1} in \refT{T1} to a
corresponding condition for the limiting graphon (which we may assume
exists, as discussed above).

\begin{lemma}
  \label{LA1}
Suppose that $G_n\to W$ for some graphon $W$ and let
$F$ be a fixed graph and $\gax\ge0$ a fixed number.
Then the following are equivalent:
\begin{romenumerate}
  \item
For all subsets $\uuf$ of\/ $V(G_n)$, 
\begin{equation}\label{la1i}
  N(F,G_n;\uuf)=\gax\prodif|U_i|+o\bigpar{|G_n|^\ff}.
\end{equation}
\item
For all subsets $\aaf$ of\/ $\oi$,
\begin{equation}\label{la1ii}
  \intaafx \psifw(\xxf)=\gax\prodif\gl(A_i).
\end{equation}
\item
$\psifw(\xxf)=\gax$ for \aex{} $\xxf\in\oiff$.
\end{romenumerate}
\end{lemma}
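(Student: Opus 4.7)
The equivalence (ii) $\Leftrightarrow$ (iii) will be an immediate application of \refL{L1} to $f = \psifw - \gax$: condition (ii) asserts $\intaafx f = 0$ for all measurable $\aaf\subseteq\oi$ (in particular all disjoint tuples), so \refL{L1} gives $f=0$ \aex, which is (iii); the converse is Fubini.

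The main task is (i) $\Leftrightarrow$ (ii), for which the plan is to use \eqref{nfgu} to convert the graph-theoretic count into an integral over $W_{G_n}$, and then transfer to $W$ via cut-norm convergence. Since $G_n\to W$ is equivalent to $\dcut(W_{G_n},W)\to 0$ and the cut metric is an infimum over rearrangements, we may assume (by relabeling the vertices of $G_n$, which affects neither (i) nor \eqref{nfgu}) that $\cn{W_{G_n}-W}\to 0$ directly. The key analytic tool will be the standard decorated counting inequality
\[
  \bigabs{\intaafx \psifx{W_1} - \intaafx \psifx{W_2}} \le e(F)\,\cn{W_1-W_2},
\]
valid uniformly over measurable $\aaf\subseteq\oi$; this is proved by telescoping $\psifx{W}(\xxf)=\prod_{ij\in E(F)}W(x_i,x_j)$ over the edges of $F$. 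Combined with the cut-norm convergence just arranged, it yields $\intaafx\psifwn \to \intaafx\psifw$ uniformly in $\aaf$.

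With these tools the direction (ii) $\Rightarrow$ (i) is immediate: for any $\uuf\subseteq V(G_n)$, substituting $A_i=U'_i$ gives $\int_{\uuifx}\psifwn = \gax\prodif|U_i|/|G_n|+o(1)$ by uniform convergence and (ii), and \eqref{nfgu} then yields \eqref{la1i}. For (i) $\Rightarrow$ (ii), given arbitrary measurable $\aaf$, we approximate each $A_i$ by a union $B_i^{(n)}=U_i^{(n)'}$ of $W_{G_n}$-intervals with $\gl(B_i^{(n)}\setdiff A_i)\to 0$; this is possible for $n$ large, since any measurable set is approximable within $\eps$ by a finite union of intervals, which can then be aligned to the $W_{G_n}$-grid at cost $O(1/|G_n|)$. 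Condition (i), applied to $U_1^{(n)},\dots,U_\ff^{(n)}$, together with \eqref{nfgu} gives $\int_{B_1^{(n)}\times\dotsm\times B_\ff^{(n)}}\psifwn = \gax\prodif\gl(B_i^{(n)})+o(1)$; swapping $\psifwn$ for $\psifw$ by the counting lemma, and using the bound $|\psifw|\le 1$ on the symmetric-difference error, then yields (ii).

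The main obstacle, such as it is, is essentially bookkeeping: aligning the non-unique graphon representations of $G_n$ with $W$ via the WLOG rearrangement, and handling the interval approximations uniformly in $n$. There is no real algebraic content at this level—both sides of (i) and (ii) are linear in $\gax$—and the only nontrivial analytic input is the standard counting lemma.
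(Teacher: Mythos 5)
Your proof of the equivalence (ii)$\iff$(iii) via \refL{L1} is correct and is exactly what the paper does. The main issue is with (i)$\iff$(ii), specifically the step where you reduce to $\cn{W_{G_n}-W}\to 0$ ``by relabeling the vertices of $G_n$.''

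This reduction is a genuine gap. The cut distance $\dcut(W_{G_n},W)$ is an infimum over \emph{all} measure-preserving bijections of $\oi$, and a permutation $\sigma_n$ of $V(G_n)$ realizes only the tiny subclass consisting of permutations of the $|G_n|$ equal-length intervals. The assertion that $\dcut(W_{G_n},W)\to 0$ implies $\min_{\sigma_n}\cn{W_{G_n}^{\sigma_n}-W}\to 0$ is true, but it is a nontrivial theorem (a quantitative comparison of the cut metric with the ``vertex-overlay'' distance, essentially one of the main results of \cite{BCLSV1}); it is not an immediate consequence of the definitions and you do not cite or prove it. Moreover your interval-approximation step is precisely the step that \emph{forces} you to work with a labeling, since it needs the $W_{G_n}$-grid cells to be intervals that you can align finite unions of intervals to.

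The paper sidesteps this entirely. It takes $\gf_n$ from the definition of $\dcut$ and sets $W_n\=W_{G_n}^{\gf_n}$, so that $\cn{W_n-W}\to 0$ holds by construction; the price is that the grid cells $\injy=\gf_n^{-1}(\inj)$ are now arbitrary measurable sets of measure $1/|G_n|$, not intervals, and one cannot approximate a given measurable $A_i$ in $L^1$ by a union of such cells. Instead the paper exploits that $\psifwn$ is constant on each box $\ijifx$: given arbitrary $\aaf$, it constructs random sets $B_i$ as unions of cells by including cell $\injy$ in $B_i$ independently with probability $a_{ij}=\gl(A_i\cap\injy)/\gl(\injy)$, and then the identity in \eqref{sofie} gives
\[
  \int_{\aafx}(\psifwn-\gax)=\E\int_{\bbfx}(\psifwn-\gax)=o(1),
\]
exactly, with no $L^1$ approximation of the $A_i$. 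This is the crucial idea you are missing, and it is also the one reused (with small variations) in the proofs of Lemmas \ref{LA2}, \ref{LA1d} and \ref{LA1c}, so it is worth understanding. If you insist on the interval-approximation route you must first justify the vertex-overlay reduction, which is a heavier ingredient than the lemma itself.
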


\begin{proof}
(iii)$\implies$(ii) is trivial, and
(ii)$\implies$(iii) is immediate by \refL{L1}
  applied to $\psifw-\gax$.

(i)$\iff$(ii).
  The convergence $G_n\to W$ is equivalent to
  $\dcut(W_{G_n},W)\to0$.
By the definition of $\dcut$, there thus exist
  \mpx{} bijections $\gf_n:\oi\to\oi$ such
  that if  $W_n\=W_{G_n}^{\gf_n}$, then
$\cn{W_n-W}\to0$. 
Fix $n$, and
let $\inj$ ($1\le j\le n)$ be the intervals of length $|G_n|\qw$ used to
  define $W_{G_n}$, and let as in \eqref{nfgu}
  $U'\=\bigcup_{j\in U} \inj$ for a subset
  $U$ of $V(G_n)$; 
further, let $\inj''\=\gf_n\qw(\inj)$ and 
$U''\=\gf_n\qw(U')=\bigcup_{j\in U} \inj''$.
Then, for any subsets $\uuf$ of $V(G_n)$,
by  \eqref{nfgu} and a change of variables,
\begin{equation*}%\label{nfgu''}
  N(F,G_n;\uuf)=|G_n|^\ff\int_{\uuiifx}\psifx{W_n}+o\bigpar{|G_n|^\ff}.
\end{equation*}
Hence, (i) %\eqref{la1i} 
is equivalent to
\begin{equation}\label{e1'}
  \int_{\uuiifx}\psifx{W_n}
=
\gax\prodif\frac{|U_i|}{|G_n|}+o(1)
=
\gax\prodif\gl(U''_i)+o(1),
\end{equation}
for all subsets $U_i''$ that are unions of sets $\inj''$.

We next extend \eqref{e1'} from the special sets $U''_i$
(in a family that depends on $n$) to arbitrary (measurable) sets.
Thus, assume that \eqref{e1'} holds, and let $\aaf$
be arbitrary subsets of \oi. 
Fix $n$ and 
let $a_{ij}\=\gl(A_i\cap
\injy)/\gl(\injy)$.
Further, let $B_i$ be a random subset of \oi{} obtained
by taking an independent family $J_{ij}$ of independent
0--1 random variables with
$\P(J_{ij}=1)=a_{ij}$, and then taking $B_i\=\bigcup_{j:J_{ij}=1}\injy$.
Then the sets $B_i$ are of the form $U''_i$, so
\eqref{e1'} applies to them, and,
noting that $W_n$ is constant on every set $\iniy\times \injy$, 
and hence $\psifwn$ is constant on every set $\ijifx$,
\begin{equation}\label{sofie}
  \begin{split}
\int_{\aafx} (\psifwn-\gax)
&=
\sum_{j_1,\dots,j_\ff=1}^{|G_n|}\prodif a_{ij_i}\int_{\ijifx}(\psifwn-\gax)
\\&
=\E\sum_{j_1,\dots,j_\ff=1}^{|G_n|}\prodif J_{ij_i}\int_{\ijifx}(\psifwn-\gax)
\\&
=
\E \int_{\bbfx} (\psifwn-\gax)
=o(1),
  \end{split}
\raisetag{1.4\baselineskip}
\end{equation}
where the final estimate uses \eqref{e1'}.
Consequently, \eqref{e1'}, for all special sets
$U_i''$, is equivalent to the same estimate 
\begin{equation}\label{e1''}
  \int_{\aafx}\psifx{W_n}
=
\gax\prodif\gl(A_i)+o(1),
\end{equation}
for any measurable sets $\aaf$ in
$\oi$. Consequently, (i) is equivalent to
\eqref{e1''}.
(Recall that estimates such as \eqref{e1''} are supposed to
be uniform over all choices of $\aaf$.)

It is well-known that for two graphons $W$ and $W'$,
\begin{equation*}
\biggabs{\int_{\oi^m}\bigpar{\psifx{W}-
\psifx{W'}}}=O(\cn{W-W'}),  
\end{equation*}
see \cite{BCLSV1};
moreover, the  proof in \cite{BCLSV1}
(or the version of the proof in \cite{BR})
shows that the same holds, uniformly, also if we
integrate over a subset $\aamx$.
(In other words, extending the cut norm to functions of several variables
as in \cite{cutsub}, 
$\cn{\psifx{W}- \psifx{W'}}=O(\cn{W-W'})$.)
Consequently, 
the assumption $G_n\to W$, which as said yields $\cn{W_n-W}\to0$, implies that 
$\int_{\aafx}\psifx{W_n}=\int_{\aafx}\psifx{W}+o(1)$, and thus
\eqref{e1''}, and hence (i),
is equivalent to
\begin{equation}\label{e1}
  \int_{\aafx}\psifx{W}
=
%\gax\prodif\frac{|U_i|}{|G_n|}+o(1)
%=
\gax\prodif\gl(A_i)+o(1).
\end{equation}
Consequently, (ii)$\implies$(i). 
Conversely, none of the terms in \eqref{e1} depends on
$n$, so if \eqref{e1} holds, then the $o(1)$
error term vanishes and \eqref{la1ii} holds. Hence
(i)$\implies$(ii). 
\end{proof}

\subsection{An optional measure theoretic interlude}\label{SSnull}

To prove \refT{T1}, it thus remains only to show that if
$W$ is a graphon such that $\psifw=p\eff$ \aex,
then $W=p$ a.e.
(In the terminology of \refR{Rqrw},
``$\psifw=p^{e(F)}$'' is a \pqr{} property.)

We know several ways to do this. One, direct, is given in \refSS{SSC1ae}.
However, as will be seen  in \refSS{SSC1},
it is much simpler to argue if we can assume that
$\psifw=p\eff$ everywhere, and not just a.e. (The main
reason is that we then can choose $x_1=x_2=\dots=x_\ff$.)
Hence, somewhat
surprisingly, 
the qualification 'a.e.' here forms a significant technical
problem. Usually, '\aex' is just a technical formality in
arguments in integration and measure theory, but here it is an
obstacle and we would like to get rid of it. 
We do not see any trivial way to do
this, but we can do it as follows. 
(To say that $W'$ is a version of $W$ means that
$W'=W$ \aex; this implies that all integrals considered here
are equal for $W$ and $W'$, and thus $G_n\to W'$
as well.)
See \refSS{SS1A} and \refApp{Appa} for an alternative.

\begin{lemma}
  \label{LB1} Let $F$ be a graph with $e(F)>0$, and let
  $W$ be a graphon.
If $\psifw=\gax>0$ \aex{} on $\oiff$,
then there exists a version $W'$ of\/ $W$ such that
$\psifx{W'}(\xxf)=\gax$ for all $(\xxf)\in\oiff$.
\end{lemma}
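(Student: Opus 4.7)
The plan is to construct $W'$ by modifying $W$ on a carefully chosen null subset $E\subseteq\oi^2$ so that $\psifx{W'}(\xxf)=\ga$ holds pointwise. First, I would observe that since $\ga>0$ and $W\in\oi$, the hypothesis forces $W>0$ almost everywhere on $\oi^2$: if $W$ vanished on a set of positive 2-dimensional measure, Fubini would yield a set of positive measure in $\oiff$ on which at least one factor of $\psifw$ vanishes, contradicting $\psifw=\ga$ a.e. Let $N\=\{(\xxf)\in\oiff:\psifw(\xxf)\neq\ga\}$, which is a null set.

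Next, I would look for a null set $E\subseteq\oi^2$ that ``covers'' $N$ in the sense that every $(\xxf)\in N$ has at least one pair $(x_i,x_j)$ with $ij\in E(F)$ lying in $E$. Then defining $W'=W$ off $E$ and assigning a suitable value on $E$ would give $\psifx{W'}(\xxf)=\psifw(\xxf)=\ga$ automatically for every $(\xxf)$ with no pair in $E$, while the modification on $E$ would handle the remaining $(\xxf)\in N$.

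A natural candidate for the modification arises by fixing an edge $e_0=\{1,2\}\in E(F)$ and putting
\begin{equation*}
g(u,v)\=\int_{\oi^{\ff-2}}\prod_{ij\in E(F),\,ij\neq e_0}W(x_i,x_j)\dd x_3\cdots\dd x_\ff,
\end{equation*}
evaluated at $x_1=u$ and $x_2=v$. By Fubini $g$ is defined a.e., and integrating the a.e.\ identity $\psifw=\ga$ gives $W(u,v)\,g(u,v)=\ga$ for a.e.\ $(u,v)$. I would set $W'(u,v)\=\ga/g(u,v)$ where $g$ is defined and positive, and $W'(u,v)\=\ga^{1/e(F)}$ elsewhere (symmetrizing so that $W'$ is symmetric). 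Since $\ga/g\le 1$ by $W\le 1$, this is a bona fide version of $W$ taking values in $(0,1]$; the pointwise product identity should then follow by iterating the construction over different choices of $e_0$ and exploiting symmetry in the $\ff$ coordinates.

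The hard part will be verifying $\psifx{W'}(\xxf)=\ga$ for \emph{every} $(\xxf)$, particularly for those whose coordinates lie in low-dimensional exceptional subsets. The naive ``cover $N$ by coordinate cylinders $\{(x_i,x_j)\in E\}$'' approach fails outright because the projections of $N$ onto 2-dimensional coordinate planes need not be null, so $E$ must be chosen more cleverly. This is precisely the elimination-of-null-sets problem flagged in the introduction and treated in Appendix~A; its measure-theoretic machinery will likely be essential, combined with the explicit integral formula for $g$ above.
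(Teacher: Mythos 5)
Your opening observations are on the right track --- $W>0$ a.e., and factoring $\psifw$ to derive $W(u,v)\cdot(\text{something})=\gax$ a.e.\ is indeed where the proof starts --- but there is a genuine gap, and it appears exactly at the moment you integrate out the remaining coordinates. Your $g(u,v)=\int_{\oi^{\ff-2}}\prod_{ij\neq e_0}W(x_i,x_j)\dd x_3\cdots\dd x_\ff$ is a genuinely two-variable function, so $W'(u,v)\=\gax/g(u,v)$ carries no usable structure: its set of Lebesgue points is merely a.e.\ $(u,v)$, \emph{not} a product of one-dimensional sets, which is what one needs in order for a full-measure subset of $\oiff$ to have all $\binom{\ff}{2}$ pairs $(x_i,x_j)$ simultaneously good. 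You correctly diagnose that the naive ``cover $N$ by coordinate cylinders'' idea fails, but the proposed fix of ``iterating over different $e_0$ and exploiting symmetry'' does not close this --- each edge yields its own two-variable marginal with its own uncoordinated exceptional set, and the resulting null-set redefinitions of $W$ need not even be mutually consistent. (A smaller issue: your claim that $\gax/g\le1$ follows from $W\le1$ goes the wrong way; $W\le1$ gives $g\le1$, hence $\gax/g\ge\gax$, and $g\ge\gax$ holds only a.e.) You also close by punting to the Appendix~A machinery, but the paper's proof of this particular lemma is a direct, self-contained argument that never invokes Appendix~A; the appendix is offered only as an \emph{alternative}, needed for cases such as $\tpsifw$ where no clean direct argument is available.

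The missing idea is that instead of integrating over $x'=(x_3,\dots,x_\ff)$, one should \emph{fix} $x'$ at a Fubini-generic value. Splitting the non-$W(x_1,x_2)$ factors of $\psifw$ into a product $f(x_1,x')$ of those involving $x_1$ and a product $g(x_2,x')$ of the rest, the a.e.\ identity $W(x_1,x_2)\,f(x_1,x')\,g(x_2,x')=\gax$ holds, for a.e.\ fixed $x'$, for a.e.\ $(x_1,x_2)$; fixing one good $x'$ and abbreviating $f(x)\=f(x,x')$, $g(y)\=g(y,x')$ yields $W(x,y)=\gax/(f(x)g(y))$ a.e., a \emph{rank-one product} of one-variable functions. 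Now let $\sss\subset\oi$ be the full-measure set of common Lebesgue points of $f$ and $g$ and set $W_1(x,y)\=\min\bigl(1,\gax/(f(x)g(y))\bigr)$. Every $(x,y)\in\sss^2$ is then a Lebesgue point of $W_1$, hence every $(\xxf)\in\sss^\ff$ is a Lebesgue point of $\psifx{W_1}$, hence $\psifx{W_1}=\gax$ on all of $\sss^\ff$. Finally a measure-preserving map $\gf$ collapsing $\oi\setminus\sss$ to a point $x_0\in\sss$ gives $W'\=W_1^\gf$ with $\psifx{W'}\equiv\gax$ on all of $\oiff$. The heart of the lemma is thus the reduction to one-variable $f,g$, which localizes the exceptional set inside a union of coordinate \emph{strips} rather than an uncontrolled null subset of $\oi^2$; without that product structure one runs into exactly the wall you describe.
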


\begin{proof}
By  symmetry, we may assume $12\in E(F)$; hence
  $\psifw(\xxf)$, defined in \eqref{psifw}, contains a
  factor $W(x_1,x_2)$. We 
let $x'\=(x_3,\dots,x_\ff)$ and 
collect the other factors in
  \eqref{psifw} into a product $f(x_1,x')$ of the
  factors corresponding to edges $1j\in E(F)$ with
  $j\ge3$, and another product $g(x_2,x')$ of the
  remaining factors.
Thus
\begin{equation*}
  \psifw(\xxf)=W(x_1,x_2)f(x_1,x')g(x_2,x').
\end{equation*}
By assumption, thus
\begin{equation}\label{w1}
W(x_1,x_2)f(x_1,x')g(x_2,x')=\gax
\end{equation}
for \aex{} $(x_1,x_2,x')$. We may thus choose
$x'$ (\aex{} choice will do) such that
\eqref{w1} holds for \aex{} $(x_1,x_2)$.
We fix one such $x'$ and write $f(x)\=f(x,x')$, $g(y)\=g(y,x')$;
we then have $W(x,y)f(x)g(y)=\gax$ for \aex{}
$(x,y)$.

We define
$W_1(x,y)\=\max\bigpar{1,\gax/(f(x)g(y))}$;
thus $W_1=W$ a.e.

Let $|(\xxm)|_\infty\=\max|x_i|$.
Recall that if $f$ is an integrable function on
$\bbR^m$ for some $m$ (or on a subset such as
$\oi^m$), then a point $x$ is a \emph{\lp}
of $f$ if
$(2\eps)^{-m}\int_{|y-x|_\infty<\eps}|f(y)-f(x)|\dd
y =o(1)$ as $\eps\to0$. In probabilistic terms, this says
that if $\xex$ is a random point in the cube 
\set{y:|y-x|_\infty<\eps}, then
$f(\xex)\lto f(x)$.
For bounded functions, which is the case here, this is equivalent to
$f(\xex)\pto f(x)$ as $\eps\to0$, which shows,
for example, that if $x$ is a \lp{} of both $f$
and $g$, then it is also a \lp{} of $f\pm g$,
$fg$, and, provided $g(x)\neq0$, of $f/g$.
It is well-known,
see \eg{} \citet[\S1.8]{Stein},
that if $f$ is integrable, then \aex{} point is a
\lp{} of $f$.

We can thus find a null set $N\subset\oi$ such that every
$x\in\sss\=\oi\setminus N$ is a \lp{} of
both $f$ and $g$.
Since $W(x,y)\le1$ and thus $f(x)g(y)\ge\gax$
\aex, 
it then follows that if $(x_1,x_2)\in\sss^2$, then
$(x_1,x_2)$ is a \lp{} of $W_1$. This implies, by the definition
\eqref{psifw}, that if $(\xxf)\in\sss^\ff$,
then $(\xxf)$ is a \lp{} of $\psifw$;
hence, using $\psifx{W_1}=\psifw$ \aex{}
and $\psifw=\gax$ \aex,
$\psifx{W_1}(\xxf)=\gax$ for $(\xxf)\in\sss^\ff$.

This would really be enough for our purposes, but to obtain the
conclusion as stated, we choose $x_0\in\sss$ and define
$\gf:\oi\to\oi$ by $\gf(x)=x$ for
$x\in\sss$ and $\gf(x)=x_0$ for $x\in
N$; then $W'\=W_1^\gf$ satisfies
$\psifx{W'}=\gax$ everywhere.
\end{proof}

\begin{remark}\label{RB1}
  Although we do not need it, we note that \refL{LB1} is valid for the
  trivial case $e(F)=0$ too, since then $\psifw=1$ for every $W$ and
  there is nothing to prove. 
We do not know whether \refL{LB1} is also valid for $\gax=0$; consider
  for example $F=K_3$.  (In this case it suffices to
  consider 0/1-valued $W$ and $W'$.)
\end{remark}

\subsection{The first algebraic argument}\label{SSC1}

The proof of \refT{T1} is now completed, by Lemmas
\refand{LA1}{LB1} and the remarks above, by the
following lemma:

\begin{lemma}
  \label{LC1}
Let $F$ be a graph with $e(F)>0$ and let $W$ be a
graphon. If $p>0$ and $\psifw(\xxf)=p\eff$ for every
$(\xxf)\in\oiff$, then $W=p$.
\end{lemma}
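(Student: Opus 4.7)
The plan is to exploit the fact that the hypothesis $\psifw=p^{e(F)}$ holds \emph{pointwise} (not just a.e.) by specializing the variables to coincident values; this kind of substitution lies on a set of measure zero and is precisely what Lemma~\ref{LB1} was set up to allow.

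Since $e(F)>0$, we have $|F|\ge 2$ and at least one vertex of $F$ has positive degree. By the symmetry of the hypothesis under relabelling of $V(F)$, I may assume that vertex $1$ has degree $d\ge 1$ in $F$. The first step is to collapse all other variables: set $x_1=x$ and $x_2=x_3=\dots=x_{|F|}=y$. Each edge $ij\in E(F)$ then contributes a factor $W(x,y)$ if $1\in\{i,j\}$ and a factor $W(y,y)$ otherwise, so
\begin{equation*}
W(x,y)^{d}\,W(y,y)^{e(F)-d}=p^{e(F)} \qquad\text{for all } x,y\in[0,1].
\end{equation*}

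The second step extracts the diagonal: setting $x=y$ in the display gives $W(y,y)^{e(F)}=p^{e(F)}$ for every $y\in[0,1]$, and since $W(y,y)\ge 0$ and $p>0$, this forces $W(y,y)=p$ for all $y$. Plugging this back into the display and cancelling $p^{e(F)-d}$ yields $W(x,y)^{d}=p^{d}$, hence $W(x,y)=p$ for all $x,y\in[0,1]$.

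There is essentially no obstacle here, which is the whole point: the combinatorial/analytic work has already been done, and the present lemma is a short algebraic identity. The only thing worth flagging is the role of the hypothesis $p>0$ (needed to take $d$-th roots and to avoid the degeneracies discussed in Remark~\ref{RB1}), and the fact that the specialization $x_1=\dots=x_{|F|}$ is legitimate precisely because the hypothesis is pointwise rather than almost-everywhere.
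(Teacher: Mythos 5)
Your proof is correct and follows essentially the same argument as the paper: specialize $x_2=\dots=x_{|F|}$ to a common value $y$, extract $W(y,y)=p$ by taking all variables equal, and then cancel to conclude $W(x,y)=p$. The only cosmetic difference is that the paper treats the fully-collapsed case first and then the $(x,y,\dots,y)$ case, whereas you derive the two-variable identity first and recover the diagonal by setting $x=y$.
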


\begin{proof}
  First take $x_1=x_2=\dots=x_\ff=x$. Then
  $\psifw(\xxf)=W(x,x)\eff$, and thus $W(x,x)=p$,
  for every $x\in\oi$.
Next, we may assume by symmetry that the degree $d_1$ of vertex
  1 in $F$ is non-zero. 
Let $x,y\in\oi$ and 
take $x_1=x$ and $x_2=\dots=x_\ff=y$. Then
\begin{equation*}
  p\eff=\psifw(\xxf)=W(x,y)^{d_1}W(y,y)^{e(F)-d_1}
=W(x,y)^{d_1}p^{e(F)-d_1}.
\end{equation*}
Hence $W(x,y)=p$.
\end{proof}

This completes the first version of our graph limit proof of \refT{T1}.

\subsection{The second algebraic argument}\label{SSC1ae}

As said above, we can alternatively avoid \refL{LB1} and
instead use the following stronger version of \refL{LC1},
which together with \refL{LA1} yields another proof of \refT{T1}.

\begin{lemma}
  \label{LC1ae}
Let $F$ be a graph with $e(F)>0$ and let $W$ be a
graphon. If $\psifw(\xxf)=p\eff$ for \aex{}
$(\xxf)\in\oiff$, then $W=p$ a.e.
\end{lemma}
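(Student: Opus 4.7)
The plan is to mimic the argument of \refL{LC1} while respecting the `a.e.'\ throughout, by reconstructing $W$ from $\psifw$ as a rank-one product $\phi(x)\phi(y)$. The main obstacle, compared with the pointwise version, is that we cannot simply substitute special values like $x_1 = \dots = x_\ff = x$, since such slices have Lebesgue measure zero; every manipulation must be legitimised by Fubini.

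Concretely, pick any edge $12 \in E(F)$ (available since $e(F) > 0$) and write $x' = (x_3, \dots, x_\ff)$. Grouping factors yields
\begin{equation*}
\psifw(x_1, x_2, x') = W(x_1, x_2)\, f(x_1, x')\, g(x_2, x'),
\end{equation*}
where $f(x_1, x')$ collects the factors $W(x_1, x_j)$ for $1j \in E(F)$ with $j \ge 3$, and $g(x_2, x')$ collects all remaining factors. By Fubini, for \aex{} $x'$ the hypothesis holds for \aex{} $(x_1, x_2)$; fix such an $x'$ and abbreviate $f(x) = f(x, x')$, $g(y) = g(y, x')$. Since $W \le 1$ and the product equals $p^{e(F)} > 0$, we obtain $f(x)g(y) \ge p^{e(F)} > 0$ \aex, whence a further Fubini argument yields $f, g > 0$ \aex. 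Hence $W(x, y) = p^{e(F)}/\bigl(f(x)\,g(y)\bigr)$ \aex. Symmetry of $W$ then forces $f(x)/g(x) = f(y)/g(y)$ \aex, and one more Fubini reduction identifies $f/g$ with a positive constant $c$ \aex. Consequently $W(x,y) = \phi(x)\,\phi(y)$ \aex{} for a positive function $\phi$.

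Substituting back into the hypothesis yields $\prod_{k=1}^{\ff} \phi(x_k)^{d_k} = p^{e(F)}$ for \aex{} $(x_1, \dots, x_\ff)$, where $d_k$ is the degree of vertex $k$ in $F$. Since $e(F) > 0$, some $k_0$ has $d_{k_0} > 0$; fixing the remaining coordinates at a generic value forces $\phi(x_{k_0})^{d_{k_0}}$, and hence $\phi$, to be \aex{} equal to a positive constant $C$. Thus $W = C^2$ \aex, and the hypothesis then gives $C^{2e(F)} = p^{e(F)}$, so $C^2 = p$ and $W = p$ \aex.

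The main technical difficulty is the careful tracking of the `a.e.' qualifiers: each of the three key passages — dividing the constraint by $f$ and $g$, using symmetry to extract the rank-one form $\phi \otimes \phi$, and finally isolating a single coordinate — requires picking a generic slice on which all preceding \aex{} statements hold simultaneously. This is precisely the bookkeeping that \refSS{SSnull}, via \refL{LB1}, sidesteps by first upgrading the hypothesis to an everywhere statement; the present route trades that null-set elimination for a chain of Fubini-type reductions.
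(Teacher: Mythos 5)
Your proof is correct, but it takes a genuinely different route from the paper's. The paper proves \refL{LC1ae} by symmetrizing: it multiplies $\Psi_{\sigma(F),W}$ over all permutations $\sigma\in\fsf$ to get $\prod_{i<j}W(x_i,x_j)^{e(F)k!/\binom{k}{2}}$, takes logarithms, and then invokes \refL{Lassym} (Gottlieb's incidence-matrix lemma) to conclude each $\log W(x_i,x_j)=\log p$ a.e. You instead reuse the decomposition $\psifw=W(x_1,x_2)\,f(x_1,x')\,g(x_2,x')$ from \refL{LB1}, but where that proof enters Lebesgue-point territory, you stay with Fubini slices: you extract $W(x,y)=p^{e(F)}/(f(x)g(y))$ a.e., use the symmetry $W(x,y)=W(y,x)$ to force $f/g$ to be a.e.\ constant — hence $W=\phi\otimes\phi$ a.e. — and then feed the rank-one form back into the hypothesis to make $\phi$ a.e.\ constant. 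Each passage is a legitimate Fubini/generic-slice argument, and the chain is airtight (modulo the standing convention $p>0$, which both proofs need). What your route buys is that it dispenses with both \refL{Lassym} and the Lebesgue-differentiation machinery of \refL{LB1}, and it makes explicit a structural fact the paper's argument hides: the hypothesis already forces $W$ to be a symmetric rank-one product. The paper's symmetrization argument, on the other hand, is shorter once \refL{Lassym} is available and does not single out a distinguished edge, so it treats the vertices of $F$ more symmetrically. Either approach is a valid alternative to the \refL{LB1}+\refL{LC1} route.
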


\begin{proof}
  We first symmetrize. If $\gs\in\fsf$, the symmetric
  group of all permutations of \set{1,\dots,\ff}, let
  $\gs(F)$ be the image of $F$, with edges
  $\gs(i)\gs(j)$ for $ij\in E(F)$, and consider 
  \begin{equation*}
\begin{split}
%\widehat\Psi(\xxf)
\prod_{\gs\in\fsf}
	\Psi_{\gs(F),W}(\xxf)
%\\&\phantom:
=\prod_{1\le i<j\le\ff}
W(x_i,x_j)^{e(F)k!/\binom k2},	
\end{split}
  \end{equation*}
where the  equality follows because, by symmetry, each
$ij$ is an edge in $\gs(F)$ for $e(F)k!/\binom k2$ permutations $\gs$. 
By the assumption, this equals $p^{e(F)k!}$ \aex, so
taking logarithms and dividing by $e(F)k!$ we obtain
\begin{equation*}
\binom k2 \qw\sum_{1\le i<j\le\ff} \log W(x_i,x_j)
=\log p,
\qquad \text{a.e.}
\end{equation*}
For \aex{} $(x_1,\dots,x_{\ff+2})$, this
holds for every subsequence of $\ff$ elements $x_i$; it
then follows by \refL{Lassym} below, with $d=2$, $h=\ff$ and
$a(\set{i,j})=\log W(x_i,x_j)-\log p$, that in this case $W(x_1,x_2)=p$. 
Hence $W(x_1,x_2)=p$ for \aex{} $(x_1,x_2)$.
\end{proof}

\begin{lemma}\label{Lassym}
  Suppose that $1\le d\le h$, and let
  $a(I)$ be an array defined for all $d$-subsets $I$ of
  $[h+d]$.
Suppose further that for every $h$-subset $J$ of\/ $[h+d]$,
\begin{equation}\label{lassym}
  \sum_{I\subseteq J} a(I)=0,
\end{equation}
summing over the $\binom hd$ subsets of size $d$.
Then $a(I)=0$ for every $I$.
\end{lemma}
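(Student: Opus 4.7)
The plan is to prove Lemma~\ref{Lassym} by induction on $d$, reducing each instance with parameters $(d,h)$ to the smaller instance $(d-1,h-1)$ by a shifting / symmetric-difference argument. For the base case $d=1$, the hypothesis reads $\sum_{i\ne k}a(\{i\})=0$ for every $k\in[h+1]$; summing these $h+1$ equations gives $h\sum_i a(\{i\})=0$, so the total sum vanishes and hence each $a(\{k\})=0$.

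For the inductive step with $d\ge 2$, I would fix two distinct elements $w,z\in[h+d]$ and, for each $h$-subset $J$ with $w\in J$ and $z\notin J$, subtract the equation~\eqref{lassym} for $J$ from the one for $J':=(J\setminus\{w\})\cup\{z\}$. The $d$-subsets contained in the common intersection $K:=J\setminus\{w\}=J'\setminus\{z\}$ cancel, and the residual identity takes the form
\begin{equation*}
\sum_{\substack{I''\subseteq K\\ |I''|=d-1}}\bigsqpar{a(I''\cup\{w\})-a(I''\cup\{z\})}=0.
\end{equation*}
As $J$ varies, $K$ ranges over every $(h-1)$-subset of the ground set $[h+d]\setminus\{w,z\}$, whose cardinality is exactly $(h-1)+(d-1)$; hence the array $\tilde a(I''):=a(I''\cup\{w\})-a(I''\cup\{z\})$ satisfies the hypothesis of the lemma with parameters $(d-1,h-1)$, and the inductive hypothesis forces $\tilde a\equiv 0$, i.e., $a(I''\cup\{w\})=a(I''\cup\{z\})$ for every admissible $I''$.

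Since $w$ and $z$ were arbitrary, $a(I)$ is invariant under replacing any single element of $I$ by any element of $[h+d]\setminus I$, and any two $d$-subsets are joined by a chain of such single-element swaps; thus $a$ is constant on $\binom{[h+d]}{d}$. Substituting $a\equiv c$ into \eqref{lassym} for any single $J$ then gives $\binom{h}{d}c=0$, forcing $c=0$ since $h\ge d$. The main obstacle I expect is engineering the correct reduction in the first place --- that is, spotting that the symmetric difference of equations for $(J,J')$ differing by a single swap produces precisely the $(d-1,h-1)$ instance on the smaller ground set $[h+d]\setminus\{w,z\}$; once that is in hand the bookkeeping is mechanical, and the hypothesis $d\le h$ is exactly what keeps the inductive parameters in range.
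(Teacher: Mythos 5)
Your proof is correct, but it takes a genuinely different route from the paper's. The paper cites a result of Gottlieb and sketches the following argument: fix a $d$-subset $I_0$, and for each $k=0,\dots,d$ sum \eqref{lassym} over all $h$-subsets $J$ with $|J\cap I_0|=k$; this yields a linear system in the $d+1$ quantities $S_j := \sum_{|I\cap I_0|=j}a(I)$ whose coefficient matrix is invertible, so in particular $S_d=a(I_0)=0$. Your argument instead runs an induction on $d$: the base case $d=1$ is a direct averaging; for $d\ge 2$ you difference the equations for two $h$-subsets $J,J'$ differing by a single swap $w\leftrightarrow z$, observe that the common part cancels, and recognize the residual identity as an instance of the lemma with parameters $(d-1,h-1)$ on the ground set $[h+d]\setminus\{w,z\}$ applied to $\tilde a(I''):=a(I''\cup\{w\})-a(I''\cup\{z\})$. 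The inductive hypothesis gives swap-invariance, connectivity of the Johnson graph gives that $a$ is constant, and a single application of \eqref{lassym} forces the constant to be $0$. The bound $d\le h$ is used both to keep the inductive parameters legal and to ensure $\binom hd\neq 0$ at the end. Your approach avoids the need to set up and invert the $(d+1)\times(d+1)$ counting matrix, at the cost of an extra layer of induction; in particular it makes the cancellation that drives the proof entirely transparent, which the paper's sketch leaves to the reader.

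One small slip of no consequence: in your displayed residual identity the sign of the bracket should be $a(I''\cup\{z\})-a(I''\cup\{w\})$ if you subtract the equation for $J$ from that for $J'$, but since you only use that the difference vanishes the conclusion is unaffected.
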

\begin{proof}
  This is a form of a result by \citet{Gottlieb}.
(It is easily proved by fixing a $d$-subset $I_0$ and then summing
  \eqref{lassym}
for all $J$ with $|J\cap I_0|=k$, for
$k=0,\dots,d$; we omit the details.)
\end{proof}

\subsection{Further proofs}\label{SS1A}

Instead of \refL{LB1} we may use the weaker but more general
\refL{LD} in \refApp{Appa}; this lemma, with
$\Phi((w\ij)_{i<j})\=\prod_{ij\in E(F)}w\ij$, yields a
version of $W$ such that
$\psifw(\xxf)=p^{e(F)}$ at enough points so that the
proof of \refL{LC1} applies for \aex{} $(x,y)$.
(Although \refL{LD} does not guarantee
$\psifw=p^{e(F)}$ everywhere as \refL{LB1}
does.) This and \refL{LA1} yield another proof of \refT{T1}.

Alternatively, we may use \refT{TD} and argue as in the proof
of \refL{LC1}, with only notational changes, to show that
\refT{TD}(iii) does not hold for this $\Phi$, and
hence by \ref{tdw}$\iff$\ref{tduvs} in
\refT{TD}, $W$ is \aex{} constant and thus
$W=p$ \aex, yielding another proof of 
\refL{LC1ae}, and thus of \refT{T1}.

A modification of this argument is to use \refL{LC1} as
stated together with \refC{CD} to conclude that
\refL{LC1ae} holds.

Any of these proofs of \refT{T1} thus uses only the simple
algebraic argument in 
\refL{LC1} but combines it with results from \refApp{Appa}.
The latter results have rather long and technical proofs, which is the
reason why we have postponed them to an appendix.
If the objective is only to prove \refT{T1}, the direct proof
of \refL{LB1} is much simpler than using \refL{LD}
or one of its consequences \refT{TD} or
\refC{CD}. However, we have here started with the simplest
case, and for other cases it seems much more complicated to prove
analogues of \refL{LB1} or \refL{LC1ae}
directly. Hence, our main method in the sequel will be to use the
results of \refApp{Appa}, which once proven and available do
not have to be modified.

Nevertheless, we have chosen to present also the direct proofs in
Subsections \ref{SSnull}--\ref{SSC1} and
\refSS{SSC1ae} in order to show alternative ways that in the present
case are simpler. We furthermore want to inspire readers to investigate
whether there are similar direct proofs (that we have failed to find)
in some of the cases treated later too.

\section{One subset: proof of \refT{T2}}\label{SpfT2}

%\subsection{The symmetric case}

We next give a proof of \refT{T2} along the lines of \refS{SpfT1}.
We begin with a lemma giving an analogue of \refL{L1} for the case
$A_1=\dots=A_m$. 

If $f$ is a function on $\oi^m$ for some $m$, we
let $\tf$ denote its symmetrization defined by
\begin{equation}
  \label{symm}
\tf(\xxm)\=\frac1{m!}\sum_{\gs\in\fS_m} f\bigpar{\xxgsm},
\end{equation}
where $\fS_m$ is the symmetric group of all $m!$
permutations of \set{1,\dots,m}. Note that for any
integrable $f$ and any subset
$A$ of \oi,
\begin{equation}
  \label{symm1}
\int_{A^m} \tf= \int_{A^m} f.
\end{equation}

\begin{lemma}
  \label{L2}
Suppose that $f:\oi^m\to\bbR$ is an integrable function
such that $\int_{A^m} f=0$ for all  measurable subsets
$A$ of\/ $\oi$. Then $\tf=0$ \aex
\end{lemma}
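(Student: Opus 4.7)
The plan is to reduce to \refL{L1} by establishing $\int_{A_1\times\dots\times A_m}\tilde f=0$ for arbitrary disjoint measurable sets $\aam\subseteq\oi$, and then invoking \refL{L1} applied to $\tilde f$.

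By \eqref{symm1} the hypothesis yields $\int_{A^m}\tilde f=0$ for every measurable $A\subseteq\oi$. Fix disjoint $A_1,\dots,A_m\subseteq\oi$ and, for each $S\subseteq[m]$, set $A_S\=\bigcup_{i\in S}A_i$. Applying the hypothesis with $A=A_S$, decomposing $A_S^m=\bigsqcup_{(i_1,\dots,i_m)\in S^m}A_{i_1}\times\dots\times A_{i_m}$, and using the symmetry of $\tilde f$ to group tuples by their multiset of indices, one obtains
\begin{equation*}
0=\int_{A_S^m}\tilde f=\sum_{T\subseteq S}u(T),
\qquad
u(T)\=\sum_{\substack{(n_i)_{i\in T}\\ n_i\ge1,\ \sum_i n_i=m}}\frac{m!}{\prod_i n_i!}\int_{\prod_{i\in T}A_i^{n_i}}\tilde f,
\end{equation*}
where $u(T)$ collects the index tuples of support exactly $T$ (so in particular $u(\emptyset)=0$). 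A trivial induction on $|S|$ (equivalently, M\"obius inversion on the Boolean lattice $2^{[m]}$) then forces $u(T)=0$ for every $T\subseteq[m]$. Specializing to $T=[m]$, the only composition of $m$ into $m$ strictly positive parts is $(1,\dots,1)$, so $u([m])=m!\int_{A_1\times\dots\times A_m}\tilde f$ and hence the disjoint-set integral vanishes. \refL{L1} applied to $\tilde f$ now yields $\tilde f=0$ \aex

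The one mildly delicate step is the symmetry-based combinatorial expansion of $\int_{A_S^m}\tilde f$; the subsequent inclusion--exclusion and the appeal to \refL{L1} are routine. One could alternatively derive the disjoint-set identity by the randomization device used in the proof of \refL{LA1}, replacing each $A_i$ by an independent Bernoulli subsample with parameter $r_i$ and extracting the coefficient of $r_1\cdots r_m$ in the resulting polynomial identity in $r_1,\dots,r_m$, but the inclusion--exclusion above is more direct.
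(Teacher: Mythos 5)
Your proof is correct and takes essentially the same route as the paper's: both fix a disjoint family $A_1,\dots,A_m$, apply the hypothesis to the $2^m$ unions $A_S$, and extract the ``all indices distinct'' contribution by a linear-algebra step on the Boolean lattice, then invoke \refL{L1}. The paper phrases that extraction as ``the monomials $\prod_{i\in T}\xi_i$ form a basis, so each coefficient of the vanishing polynomial in $\xi\in\setoi^m$ is zero,'' while you phrase it as M\"obius inversion of $\sum_{T\subseteq S}u(T)=0$; these are the same argument, and your use of $\tilde f$ from the outset (via \eqref{symm1}) rather than symmetrizing at the end is a cosmetic difference only.
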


\begin{proof}
  Let $\aam$ be disjoint subsets of $\oi$. For any
  sequence $\xim\in\setoi^m$, take
  $A\=\bigcup_{i:\xi_i=1} A_i$. Then $\etta_A=\sumim\xi_i\etta_{A_i}$
and
\begin{equation}
  \label{l2a}
0=\int_{A^m} f = \int_{\oi^m} f\etta_{A^m}
=\sum_{i_1,\dots,i_m=1}^m\xi_{i_1}\dotsm\xi_{i_m}
\int_{A_{i_1}\times\dots\times A_{i_m}} f.
\end{equation}
The monomials $\xi_{i_1}\dotsm\xi_{i_k}$
with $i_1<\dots<i_k$, $0\le k\le m$, form a basis
of the $2^m$-dimensional space of functions on
$\setoi^m$. Hence, collecting terms in \eqref{l2a},
the coefficient of each such monomial vanishes. 
In particular, for the coefficient of
$\xi_1\dotsm\xi_m$ we obtain a contribution only when
$\iim$ is
a permutation of $1,\dots,m$, and we obtain
\begin{equation*}
  0=\sum_{\gs\in\fS_m}\int_{\aagsmx} f
=m!\int_{\aamx}\tf.
\end{equation*}
The result follows by \refL{L1}, applied to $\tf$.
\end{proof}

We can now translate the property \eqref{t2} to graphons, \cf{} \refL{LA1}.

\begin{lemma}
  \label{LA2}
Suppose that $G_n\to W$ for some graphon $W$ and let
$F$ be a fixed graph and $\gax\ge0$ a fixed number.
Then the following are equivalent:
\begin{romenumerate}
  \item
For all subsets $U$ of\/ $V(G_n)$, 
\begin{equation*}
  N(F,G_n;U)=\gax|U|^\ff+o\bigpar{|G_n|^\ff}.
\end{equation*}
\item
For all subsets $A$ of\/ $\oi$,
\begin{equation*}
  \intaf \psifw(\xxf)=\gax\gl(A)^\ff.
\end{equation*}
\item
$\tpsifw(\xxf)=\gax$ for \aex{} $\xxf\in\oiff$.
\end{romenumerate}
\end{lemma}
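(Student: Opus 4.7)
The plan is to follow the same three-part template as in \refL{LA1}, with two modifications tailored to the single-subset situation. First I will establish (ii)$\iff$(iii) using the symmetric version of Lebesgue differentiation, namely \refL{L2}. Since $\psifw$ is not symmetric but its integral over $A^{\ff}$ equals that of $\tpsifw$ by \eqref{symm1}, condition (ii) is equivalent to $\int_{A^{\ff}}(\psifw-\gax)=0$ for every measurable $A\subseteq\oi$. Applying \refL{L2} to the integrable function $f=\psifw-\gax$ gives $\widetilde{\psifw-\gax}=\tpsifw-\gax=0$ \aex, which is (iii); conversely, if (iii) holds, then $\int_{A^\ff}\psifw=\int_{A^\ff}\tpsifw=\gax\gl(A)^\ff$, using \eqref{symm1} again.

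The equivalence (i)$\iff$(ii) proceeds exactly as in the proof of \refL{LA1}. Choose \mpx{} bijections $\gf_n$ with $\cn{W_n-W}\to0$ for $W_n\=W_{G_n}^{\gf_n}$, and pull back a subset $U\subseteq V(G_n)$ to $U''\=\gf_n\qw\bigl(\bigcup_{j\in U}\inj\bigr)$. Then \eqref{nfgu} rewrites (i) as
\begin{equation*}
  \int_{(U'')^\ff}\psifx{W_n}=\gax\gl(U'')^\ff+o(1),
\end{equation*}
uniformly for all $U''$ that are unions of the intervals $\injy$.

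To pass from such special $U''$ to an arbitrary measurable $A\subseteq\oi$, I mimic the randomization of \refL{LA1}: set $a_j\=\gl(A\cap\injy)/\gl(\injy)$, let $J_j$ be independent Bernoulli variables with mean $a_j$, and form the random set $B\=\bigcup_{j:J_j=1}\injy$. Expanding $\int_{B^\ff}(\psifwn-\gax)$ as a sum over $(j_1,\dotsc,j_\ff)$ and taking expectations, the contribution of tuples with all $j_i$ distinct exactly reproduces $\int_{A^\ff}(\psifwn-\gax)$ (because $\psifwn$ is constant on each box $\ijifx$). The \emph{new} feature compared to \refL{LA1} is a diagonal error: since $J_j^k=J_j$ rather than $a_j^k$, repeated indices contribute a discrepancy. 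However, there are only $O(|G_n|^{\ff-1})$ such tuples, each box has measure $|G_n|^{-\ff}$, and $|\psifwn-\gax|$ is bounded, so the total diagonal error is $O(|G_n|\qw)=o(1)$ and can be absorbed into the error in \eqref{e1'}. This diagonal bookkeeping is the one genuinely new step and will be the main place where care is needed.

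Once this extension is achieved, the standard bound $\bigabs{\int_{A^\ff}(\psifx{W_n}-\psifx{W})}=O(\cn{W_n-W})=o(1)$ from \cite{BCLSV1} replaces $W_n$ by $W$, and (i) becomes
\begin{equation*}
  \int_{A^\ff}\psifw=\gax\gl(A)^\ff+o(1),
\end{equation*}
uniformly in $A$; since the \lhs{} and first term of the \rhs{} are independent of $n$, the $o(1)$ vanishes and (ii) follows, and conversely. \qed
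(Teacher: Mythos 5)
Your proposal is correct and matches the paper's proof essentially step for step: both reduce (ii)$\iff$(iii) to \refL{L2} via \eqref{symm1}, both run the $\gf_n$/cut-norm machinery of \refL{LA1} for (i)$\iff$(ii), and both identify the single random set $B$ together with the $O(|G_n|^{\ff-1})\cdot O(|G_n|^{-\ff})=o(1)$ diagonal bookkeeping (the paper's display \eqref{cecilia}) as the one genuinely new ingredient. No substantive differences to report.
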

\begin{proof}
  This is proved almost exactly as \refL{LA1}, with obvious
  notational changes and with \refL{L1} replaced by
  \refL{L2}, which together with \eqref{symm1} implies (ii)$\iff$(iii).
The main difference is that we now use a single random set 
$B\=\bigcup_{j:J_{j}=1}\injy$, where $\set{J_j}$ is a
  family of independent indicator variables.
Hence, the analogue of \eqref{sofie} is not exact; we have
\begin{equation}\label{magnus}
\E\prodif J_{j_i}
= \prodif a_{j_i}
\end{equation}
when $j_1,\dots,j_\ff$ are distinct, but in general not when two or
more are equal. However, there are only
$O(|G_n|^{\ff-1})$ choices of indices with at least two
coinciding, and each such choice introduces an error that is at most 
$\gl(\ijifx)=|G_n|^{-\ff}$. Hence, 
we now have
%\eqref{sofie} is replaced by
\begin{equation}\label{cecilia}
\int_{A^n} (\psifwn-\gax)
=
\E \int_{B^n} (\psifwn-\gax)
+o(1).
\end{equation}
The error $o(1)$ is unimportant, and, assuming (i), the conclusion of
\eqref{sofie} is valid in the form
$\int_{A^n} (\psifwn-\gax)=o(1)$, which yields (ii) as in \refS{SpfT1}.
\end{proof}

We do not know any direct proof of the analogue of
\refL{LB1} for $\tpsifw$. 
(This result follows by \refL{LA2} and \refT{T2}
once the latter is proven.)
However, as in \refSS{SS1A} we nevertheless can use the
following lemma, which is a
strengthening of \refL{LC1}.

\begin{lemma}
  \label{LC2}
Let $F$ be a graph with $e(F)>0$ and let $W$ be a
graphon. If $\tpsifw(\xxf)=p\eff$ for every
$(\xxf)\in\oiff$, then $W=p$.
\end{lemma}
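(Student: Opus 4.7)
The plan is to adapt the proof of Lemma LC1 by exploiting the symmetrization: instead of being able to pick a single term in the product $\psifw$ and evaluate it at special points, we have the averaged quantity $\tpsifw$, so we must choose the evaluation points so that the symmetrization collapses nicely. I will assume $p>0$ (this is the case of interest for \refT{T2}).

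First, I would set $x_1=x_2=\dots=x_\ff=x$ in the hypothesis. Each term $\psifw(x_{\gs(1)},\dots,x_{\gs(\ff)})$ in \eqref{symm} equals $W(x,x)^{e(F)}$ regardless of $\gs$, so $\tpsifw(x,\dots,x)=W(x,x)^{e(F)}=p^{e(F)}$, giving $W(x,x)=p$ for every $x\in\oi$.

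The main step is to evaluate at $x_1=x$ and $x_2=\dots=x_\ff=y$ for arbitrary $x,y\in\oi$. For a permutation $\gs\in\fsf$, let $m=\gs\qw(1)$; then in the product $\prod_{ij\in E(F)}W(x_{\gs(i)},x_{\gs(j)})$, each edge of $F$ incident to vertex $m$ contributes a factor $W(x,y)$, and each of the remaining $e(F)-d_m$ edges contributes $W(y,y)=p$. Since $m$ takes each value in $\set{1,\dots,\ff}$ for exactly $(\ff-1)!$ permutations, the hypothesis becomes
\begin{equation*}
\frac1{\ff}\sum_{m=1}^{\ff}W(x,y)^{d_m}p^{e(F)-d_m}=p^{e(F)},
\end{equation*}
where $d_m$ is the degree of vertex $m$ in $F$. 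Dividing by $p^{e(F)}$ and setting $t=W(x,y)/p\ge0$, this reads $Q(t)=1$ with $Q(t)\=\ff\qw\sum_{m=1}^\ff t^{d_m}$.

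The final step is to check that $t=1$ is the only non-negative solution of $Q(t)=1$. Clearly $Q(1)=1$. Since $e(F)>0$, at least one (in fact at least two) of the degrees $d_m$ is positive, so $Q'(t)=\ff\qw\sum_m d_m t^{d_m-1}>0$ for $t>0$; hence $Q$ is strictly increasing on $[0,\infty)$, forcing $t=1$ and thus $W(x,y)=p$. I don't expect any real obstacle here — the only mildly non-routine point is the combinatorial bookkeeping in the symmetrization that produces a polynomial equation depending only on the degree sequence of $F$, which is then painless to invert by monotonicity.
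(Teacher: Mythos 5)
Your proposal is correct and follows essentially the same route as the paper: evaluate $\tpsifw$ at the diagonal to get $W(x,x)=p$, then at $x_1=x$, $x_2=\dots=x_\ff=y$ to reduce to the scalar equation $\ff^{-1}\sum_m W(x,y)^{d_m}p^{e(F)-d_m}=p^{e(F)}$, and conclude by strict monotonicity of the right-hand side in $W(x,y)$. The paper compresses the symmetrization bookkeeping into ``it is easy to see'' and writes the sum as $\ff^{-1}\sum_i (W(x,y)/p)^{d_i}p^{e(F)}$, which is exactly your $Q(t)=1$ after dividing by $p^{e(F)}$; otherwise the two arguments coincide.
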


\begin{proof}
As in the proof of \refL{LC1}, first take 
$x_1=\dots=x_\ff=x$. Then
  $\tpsifw(\xxf)=\psifw(\xxf)=W(x,x)\eff$, and thus $W(x,x)=p$.
Using this, it is easy to see that if we
take $x_1=x$ and $x_2=\dots=x_\ff=y$,
and $d_i$ is the degree of vertex $i$, then
\begin{equation*}
  p\eff=\tpsifw(\xxf)=
\frac1{|F|}\sum_{i\in V(F)}
  \parfrac{W(x,y)}{p}^{d_i} p^{e(F)}.
\end{equation*}
Since the \rhs{} is a strictly increasing function of
$W(x,y)$, this equation has only the solution  $W(x,y)=p$.
\end{proof}

As in \refS{SpfT1} there is a companion result where we allow
exceptional null sets.

\begin{lemma}
  \label{LC2ae}
Let $F$ be a graph with $e(F)>0$ and let $W$ be a graphon. 
If $\tpsifw(\xxf)=p\eff$ for \aex{}
$(\xxf)\in\oiff$, then $W=p$ a.e.
\end{lemma}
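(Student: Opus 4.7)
The plan is to deduce this a.e.\ version from the pointwise Lemma~\ref{LC2} via the null-set-elimination machinery of Appendix~A, following the same pattern used in Subsection~\ref{SS1A} to derive Lemma~\ref{LC1ae} from Lemma~\ref{LC1}.

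First I would rewrite the hypothesis as a symmetric polynomial identity in the matrix of values of $W$. Define
\[
\Phi\bigpar{(w_{ij})_{1\le i<j\le\ff}} \= \frac{1}{\ff!}\sum_{\gs\in\fsf}\prod_{ij\in E(F)} w_{\gs(i)\gs(j)},
\]
so that $\tpsifw(\xxf) = \Phi\bigpar{(W(x_i,x_j))_{1\le i<j\le\ff}}$. The hypothesis then reads $\Phi\bigpar{(W(x_i,x_j))_{ij}} = p^{e(F)}$ for \aex{} $(\xxf)\in\oiff$.

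Next I would apply Theorem~\ref{TD} (or Corollary~\ref{CD}) from Appendix~A to this $\Phi$ with the target value $p^{e(F)}$. The relevant implication takes the shape: if no non-constant graphon $W$ can satisfy the equation $\Phi\bigpar{(W(x_i,x_j))_{ij}} = p^{e(F)}$ pointwise, then any graphon satisfying it \aex{} must itself be \aex{} equal to a constant. The hypothesis of this implication is precisely the content of Lemma~\ref{LC2}, whose proof (take $x_1=\dots=x_\ff=x$ to pin down the diagonal, then $x_1=x$ and $x_2=\dots=x_\ff=y$ together with strict monotonicity of $t\mapsto\tfrac{1}{|F|}\sum_{i\in V(F)}t^{d_i}$) transports without change to $\Phi$. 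Hence $W$ is \aex{} equal to some constant $c\in\oi$, and substituting this back into the hypothesis gives $c^{e(F)} = p^{e(F)}$; since $e(F)>0$ this forces $c=p$, so $W=p$ \aex

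The main obstacle I anticipate is purely bookkeeping: checking that $\Phi$ as defined above falls cleanly within the hypotheses of the appendix's theorems, and confirming that the pointwise uniqueness statement they call upon is exactly the one supplied by Lemma~\ref{LC2}. Once the appendix machinery is in place, no new algebraic or combinatorial ideas are needed --- this is the payoff of having isolated the analytic null-set-removal step into Appendix~A as discussed in Subsection~\ref{SS1A}.
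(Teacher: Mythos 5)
Your proposal is correct and is essentially the paper's own proof: the paper deduces Lemma~\ref{LC2ae} directly from Lemma~\ref{LC2} and Corollary~\ref{CD}, exactly as you describe, and your bookkeeping (that the symmetrized $\Phi$ is multiaffine, and that the final step $c^{e(F)}=p^{e(F)}$ with $c\in\oi$ and $e(F)>0$ pins down $c=p$) fills in the details the paper leaves implicit.
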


\begin{proof}
We have not tried to find a direct proof, since this follows directly
from \refL{LC2} and \refC{CD}.
\end{proof}

\refT{T2} now follows from Lemmas
\refand{LA2}{LC2ae}.
(Alternatively, we may use \refL{LD} or
\refT{TD}\ref{tduvs} and argue as in the proof of \refL{LC2}.)

\section{Further variations}\label{Svar}

\subsection{Disjoint subsets}\label{SSdisjoint}
In \refS{SpfT1} the sets $U_1,\dots,U_\ff$ of
vertices were arbitrary and in \refS{SpfT2} they were assumed
to coincide. The opposite extreme is to require that they are disjoint.
We can translate this version too to graphons as follows. Note that (iii)
in the following lemma is that same as \refL{LA1}(iii); hence
the two lemmas together show that it is equivalent to assume
\eqref{la1i} (or \eqref{t1}) for disjoint
$\uuf$ only; this implies the general case.

\begin{lemma}
  \label{LA1d}
Suppose that $G_n\to W$ for some graphon $W$ and let
$F$ be a fixed graph and $\gax\ge0$ a fixed number.
Then the following are equivalent:
\begin{romenumerate}
  \item
For all  disjoint subsets $\uuf$ of\/ $V(G_n)$, 
\begin{equation*}%\label{la1i}
  N(F,G_n;\uuf)=\gax\prodif|U_i|+o\bigpar{|G_n|^\ff}.
\end{equation*}
\item
For all disjoint subsets $\aaf$ of\/ $\oi$,
\begin{equation*}%\label{la1ii}
  \intaafx \psifw(\xxf)=\gax\prodif\gl(A_i).
\end{equation*}
\item
$\psifw(\xxf)=\gax$ for \aex{} $\xxf\in\oiff$.
\end{romenumerate}
\end{lemma}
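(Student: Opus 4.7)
The plan is to mimic the proof of \refL{LA1} closely, making only the modifications needed to restrict attention to disjoint subsets; no new algebraic or measure-theoretic input is required, only care in the probabilistic approximation step.

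First, (iii)$\implies$(ii) is immediate by integration, and (ii)$\implies$(iii) follows from \refL{L1} applied to $\psifw-\gax$, since \refL{L1} is already formulated for disjoint $\aaf$. The substantive content is therefore the equivalence (i)$\iff$(ii). For this, I would again invoke $\dcut(W_{G_n},W)\to 0$ to obtain \mpx{} bijections $\gf_n$ with $W_n\=W_{G_n}^{\gf_n}$ satisfying $\cn{W_n-W}\to 0$, and rewrite (i) via \eqref{nfgu} as the statement that, for all disjoint unions $U''_i=\bigcup_{j\in U_i}\injy$,
\begin{equation*}
  \int_{\uuiifx}\psifx{W_n}=\gax\prodif\gl(U''_i)+o(1).
\end{equation*}

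The key new step is to extend this from special disjoint $U''_i$ to arbitrary disjoint measurable $\aaf\subseteq\oi$. In the proof of \refL{LA1}, this was done by building each $B_i$ with an \emph{independent} family of Bernoulli variables; here we must produce \emph{disjoint} $B_i$. The natural fix is a multinomial-style construction: since $\aaf$ are disjoint, the proportions $a_{ij}\=\gl(A_i\cap\injy)/\gl(\injy)$ satisfy $\sum_i a_{ij}\le 1$, so for each $j$ we can draw a single random label $J_j\in\set{0,1,\dots,\ff}$ with $\P(J_j=i)=a_{ij}$ for $i\ge 1$, independently across $j$, and set $B_i\=\bigcup_{j:J_j=i}\injy$. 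The sets $B_i$ are then disjoint, so the special case applies to them, and the expectations factor correctly on distinct indices:
\begin{equation*}
  \E\prodif\ett{J_{j_i}=i}=\prodif a_{ij_i}\qquad\text{when $j_1,\dots,j_\ff$ are distinct,}
\end{equation*}
while for repeated indices the indicator vanishes. The tuples with a repeat contribute $O(|G_n|^{\ff-1})$ cells of measure $|G_n|^{-\ff}$, giving an $o(1)$ error as in the passage from \eqref{sofie} to \eqref{cecilia}. This yields
\begin{equation*}
  \int_{\aafx}\psifx{W_n}=\gax\prodif\gl(A_i)+o(1)
\end{equation*}
for all disjoint $\aaf$. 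Finally, the estimate $\cn{\psifx{W_n}-\psifx{W}}=O(\cn{W_n-W})=o(1)$ on product sets lets us replace $W_n$ by $W$, and since the resulting identity has no $n$-dependence, the $o(1)$ must vanish, yielding (ii); conversely (ii) trivially implies the $W_n$-version and hence (i).

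The only potential obstacle is verifying the disjointness construction rigorously, but this is a routine coupling argument; no analogue of the algebraic or null-set issues discussed in Subsections \ref{SSnull}--\ref{SS1A} is needed, since here we only translate the combinatorial hypothesis to a graphon hypothesis and do not attempt to conclude $W=p$.
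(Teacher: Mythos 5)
Your proposal is correct and follows essentially the same route as the paper's proof: you reduce (ii)$\iff$(iii) to the disjoint-sets version of Lemma~\ref{L1}, and for (i)$\iff$(ii) you reuse the proof of Lemma~\ref{LA1} with the random sets $B_i$ made disjoint by a coupled ("multinomial") choice of labels $J_j$, which is exactly the paper's device of taking, for each $j$, dependent indicators $J_{ij}$ with $\sum_i J_{ij}\le 1$ and $\P(J_{ij}=1)=a_{ij}$. Your observation that $\prod_i\ett{J_{j_i}=i}$ vanishes outright at repeated indices is a slightly sharper way of saying what the paper says — that $\E\prod_i J_{ij_i}=\prod_i a_{ij_i}$ holds only for distinct $j_i$ — and the $O(|G_n|^{\ff-1})$ error analysis and the final pass from $W_n$ to $W$ are as in Lemmas~\ref{LA2} and~\ref{LA1}.
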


\begin{proof}
  Again we follow the proof of \refL{LA1}. The only
  difference is that we consider only disjoint sets $\uuf$, etc.
In particular, given disjoint subsets $\aaf$ of
  $\oi$, we want to construct the random sets $B_i$ so
  that they too are disjoint. We do this by taking, for each
  $j$, the 0--1 random variables $J_{ij}$ dependent, so
  that $\sum_i J_{ij} \le1$. (This is possible
  because $\sum_i a_{ij}\le1$ when $\aaf$
  are disjoint.)
The vectors $(J_{ij})_{i=1}^\ff$ for different $j$
  are chosen independent as before.
Just as in the proof of \refL{LA2}, the dependency among the $J_{ij}$
means that \eqref{sofie} is not exact:
in analogy with \eqref{magnus},
$\E\prodif J_{ij_i}= \prodif a_{ij_i}$
when $j_1,\dots,j_\ff$ are distinct, but not in general.
However, again as in the proof of \refL{LA2}, the total error
  is $o(1)$, so the analogue of \eqref{cecilia} holds, and thus the
  conclusion 
$\int_{\aafx} (\psifwn-\gax)=o(1)$
of \eqref{sofie} holds for all disjoint sets $\aaf$.

Finally, for (ii)$\implies$(iii), note that 
\refL{L1} already is stated so that it suffices to
consider disjoint $\aaf$.
\end{proof}

\refL{LA1d},
combined with the remainder of the proof of \refT{T1} in 
\refS{SpfT1}, shows that in \refT{T1},
it is sufficient to assume \eqref{t1} for disjoint $\uuf$.

\subsection{Sets of the same size}\label{SSsamesize}

Another variation of \refT{T1} is to consider only subsets
$\uuf$ of the same size. (We may combine this with the preceding
variation and require that the sets are disjoint too.)
This can be 
translated to considering only subsets $\aaf$ of the same measure by
the same method as in the next subsection, when we further let the
common size be a given number. Since we obtain stronger results in the
next subsection, we leave the details to the reader.

\subsection{Sets of a given size}\label{SSgivensize}

Another variation of \refT{T1} is 
\refT{T1c} where we
consider only subsets
$\uuf$ of a given size, which we assume is a fixed fraction $\gc$ of
$|G_n|$ (rounded to an integer).
This is translated to graphons as follows.

\begin{lemma}
  \label{LA1c}
Suppose that $G_n\to W$ for some graphon $W$ and let
$F$ be a fixed graph and $\gax\ge0$ and $\gc\in(0,1)$ be fixed numbers.
Then the following are equivalent:
\begin{romenumerate}
  \item\label{La1ci}
For all subsets $\uuf$ of\/ $V(G_n)$
with $|U_i|=\floor{\gc|G_n|}$,
\begin{equation}\label{la1ci}
  N(F,G_n;\uuf)=\gax\prodif|U_i|+o\bigpar{|G_n|^\ff}.
\end{equation}
\item\label{La1cii}
For all subsets $\aaf$ of\/ $\oi$ with $\gl(A_i)=\gc$,
\begin{equation}\label{la1cii}
  \intaafx \psifw(\xxf)=\gax\prodif\gl(A_i).
\end{equation}
\item\label{La1ciii}
$\psifw(\xxf)=\gax$ for \aex{} $\xxf\in\oiff$.
\end{romenumerate}

If $\gc<1/\ff$, we may further, as in \refL{LA1d}, in \ref{La1ci}
and \ref{La1cii} add the requirement that the sets be
disjoint.
\end{lemma}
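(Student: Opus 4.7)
The plan is to mirror the three-part structure of the proof of \refL{LA1}, modifying only the two steps genuinely affected by the size constraint; the direction \ref{La1ciii}$\implies$\ref{La1cii} is immediate. For \ref{La1ci}$\iff$\ref{La1cii}, I would copy the argument of \refL{LA1} with one change. Given $A_i\subseteq\oi$ with $\gl(A_i)=\gc$, construct the random $U_i''=\bigcup_{j:J_{ij}=1}\injy$ with independent indicators satisfying $\P(J_{ij}=1)=\gl(A_i\cap\injy)/\gl(\injy)$ exactly as before, so $\E|U_i''|=\gc|G_n|$. Standard Chernoff concentration gives $\bigabs{|U_i''|-\floor{\gc|G_n|}}=O(|G_n|\qq)$ \whp, and adjusting each $U_i''$ by adding or deleting at most $O(|G_n|\qq)$ vertices to reach the exact size $\floor{\gc|G_n|}$ perturbs every integral $\int_{U''_1\times\dots\times U''_\ff}\psifwn$ by only $o(1)$, uniformly (since $\psifwn\le 1$). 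All remaining manipulations---passing from special sets to arbitrary measurable sets of measure $\gc$, and from $W_n$ to $W$---are identical to those in \refL{LA1}.

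The genuinely new work is \ref{La1cii}$\implies$\ref{La1ciii}, since we can no longer shrink the $A_i$ to a point. I would argue by iteration. Fix measurable $A_2,\dots,A_\ff$ of measure $\gc$ and put $h(x_1)\=\int_{A_2\times\dots\times A_\ff}\psifw(x_1,x_2,\dots,x_\ff)\dd x_2\dotsm\dd x_\ff$. Condition \ref{La1cii} forces $\int_{A_1}h=\gax\gc^\ff$ for every $A_1$ with $\gl(A_1)=\gc$; subtracting two such identities yields $\int_B h=\int_{B'}h$ for every pair of disjoint $B,B'\subseteq\oi$ with $\gl(B)=\gl(B')\le\min(\gc,1-\gc)$. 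Specializing $B,B'$ to small balls around any two Lebesgue points of $h$ then forces $h$ to be a.e.\ constant, and so $h\equiv\gax\gc^{\ff-1}$ a.e. Iterating this argument $\ff$ times---each round stripping off one integration variable, with the target constant becoming $\gax\gc^{\ff-k}$ after $k$ rounds---yields $\psifw(\xxf)=\gax$ for a.e.\ $(\xxf)$. To control the bookkeeping of null sets, one restricts the $A_i$ at each stage to a countable family dense in the symmetric-difference metric, takes a Fubini union of the resulting null sets, and then extends to all measurable $A_i$ of measure $\gc$ using dominated-convergence continuity of the map $A\mapsto\int_A\psifw(\cdot)$.

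For the disjoint variant when $\gc<1/\ff$, only cosmetic changes are needed. In \ref{La1ci}$\iff$\ref{La1cii}, the random $U_i''$ are made pairwise disjoint by using dependent indicators with $\sum_i J_{ij}\le 1$, as in \refL{LA1d}; this is consistent because $\sum_i\gl(A_i\cap\injy)/\gl(\injy)\le\ff\gc<1$. In \ref{La1cii}$\implies$\ref{La1ciii}, the test sets $A_i$ can be chosen pairwise disjoint (and the auxiliary $B,B'$ placed in the free space $\oi\setminus\bigcup_{i\ge 2}A_i$) since their total measure $\ff\gc$ is strictly below $1$.

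The main obstacle I anticipate is measure-theoretic rather than algebraic: carefully tracking null sets across the $\ff$ stages of iteration in \ref{La1cii}$\implies$\ref{La1ciii}. The countable-dense/Fubini device above is standard; alternatively, one may sidestep the iteration altogether by combining \ref{La1cii} with \refT{TD} or \refC{CD} of \refApp{Appa}, in the style of \refSS{SS1A}.
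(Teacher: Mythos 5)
Your proposal is correct and follows the same overall three-part structure as the paper's proof. The \ref{La1ci}$\iff$\ref{La1cii} step matches the paper's argument closely (the paper uses Chebyshev with $\gd_n=\gnabs^{-1/3}$ where you invoke Chernoff, but the mechanism---bounding $\Var\gl(B_i)$, adjusting the random $U''_i$ to exact size, and controlling the resulting perturbation of the integral via the bound \eqref{adiff}---is the same, as is the use of $A_i\supseteq U''_i$ with $\gl(A_i)=\gc$ in the converse direction). The genuine difference is in \ref{La1cii}$\implies$\ref{La1ciii}. The paper extracts this step into \refL{L3}, which it proves by a nested induction \emph{entirely at the level of integral identities} $f(A_1,\dots,A_m)=0$: the induction hypothesis is the statement ``$f(A_1,\dots,A_m)=0$ for all $A_i\subseteq B$'', not an a.e.\ pointwise statement, and the Lebesgue differentiation theorem is invoked only once at the very end (through \refL{L1}). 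This deliberately avoids the null-set bookkeeping that your variable-by-variable iteration requires, since at each stage you pass to an a.e.\ pointwise conclusion whose exceptional set depends on the fixed $A_{i+1},\dots,A_\ff$, and you then need the countable-dense-family/Fubini device to obtain a uniform null set before the next stage. Your route is sound---and you correctly flag where the care is needed---but the paper's formulation is arguably cleaner and has the additional advantage that \refL{L3} is a self-contained statement that is reused elsewhere (it also underlies \refL{L4} and hence \refL{LA2c}). Your handling of the disjoint variant when $\gc<1/\ff$ is likewise correct and matches the paper's in spirit (the paper builds it into claim (ii) of the proof of \refL{L3}, which requires only $m\gc<\gl(B)$); note though that in the disjoint case the a.e.\ constancy of $h$ is initially obtained only on $\oi\setminus\bigcup_{i\ge2}A_i$, so the null-set bookkeeping across iterations becomes slightly more delicate than your sketch suggests, which is another point in favour of the integral-level induction. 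Your closing remark that the whole step could be bypassed via \refC{CD} or \refT{TD} is also accurate and is exactly the route suggested in \refSS{SS1A}.
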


\begin{proof}
  The equivalence  \ref{La1ci}$\iff$\ref{La1cii} is
  proved as in the proof of \refL{LA1}, but some care has to
  be taken with the sizes and measures of the sets.
We note that for any sets $\aaf$ and $\aayf$, 
\begin{equation}\label{adiff}
\biggabs{\int_{\aafx}
  \psifwn-\int_{\aafyx}\psifwn} 
\le \sumif\gl(A_i\setdiff A'_i).
\end{equation}
Hence, we can modify the sets without affecting the results
as long as the difference has measure $o(1)$. We argue as
follows.

We obtain as in \refS{SpfT1} that \ref{La1ci} is
equivalent to \eqref{e1'}, now for 
%all $\uuiif$ that
all subsets $U_i''$ of\/ $\oi$ that are unions of sets $\inj''$
and have measures
$\gl(U_i'')=\floor{\gc\gnabs}/\gnabs$.
If \ref{La1cii} holds, we may for any such $U_i''$ find
$A_i\supseteq U_i''$ with $\gl(A_i)=\gc$; 
then \eqref{la1cii} implies first
\eqref{e1''} and then  \eqref{e1'} by
\eqref{adiff}.

Conversely, given $\aaf$ with measures
$\gl(A_i)=\gc$, the random sets $B_i$ constructed
above (either as in \refS{SpfT1} or as in
\refSS{SSdisjoint} in the disjoint case)
have measures that are random but well concentrated:
\begin{align*}
  \E\gl(B_i)
&=\sum_j\E J_{ij}\gl(\injy)
=\sum_j a_{ij}\gl(\injy)
=\gl(A_i)=\gc
\\
  \Var\gl(B_i)
&=\sum_j\Var (J_{ij})\gl(\injy)^ 2
\le\gnabs\qw\to0.
\end{align*}
Hence, if $\gd_n\=\gnabs^{-1/3}$, say, then by
Chebyshev's inequality
\begin{equation*}
  \P(|\gl(B_i)-\gc|>\gd_n)
\le \gd_n\qww\Var(\gl(B_i))\le\gd_n\to0.
\end{equation*}
If $|\gl(B_i)-\gc|\le\gd_n$ for all $i$, we
adjust $B_i$ to a set $U_i''$ with
$\gl(U_i'')=\floor{\gc\gnabs}/\gnabs$ so
that $\gl(B_i\setdiff
U_i'')\le\gd_n+\gnabs\qw\le2\gd_n$, and thus 
$$\int_{\bbfx}\psifwn = \int_{\uuiifx}\psifwn + O(\gd_n).$$
Consequently, if \eqref{e1'} holds, then
$\int_{\bbfx}\psifwn = \gax\gc^\ff + O(\gd_n)+o(1)$ whenever
$|\gl(B_i)-\gc|\le\gd_n$ for all $i$, and thus
\begin{equation*}
  \begin{split}
  \E\int_{\bbfx}\psifwn 
&
= \gax\gc^\ff + O(\gd_n)+o(1)+
O\Bigpar{\sumif\P\bigpar{|\gl(B_i)-\gc|>\gd_n}}
\\&
=\gax\gc^\ff+o(1).	
  \end{split}
\end{equation*}
Hence, \eqref{e1''} holds, for $\aaf$ with measures
$\gl(A_i)=\gc$, and thus \ref{La1cii} holds by
the argument in \refS{SpfT1}.

This proves \ref{La1ci}$\iff$\ref{La1cii}; we may add the
requirement that the sets be disjoint by the argument in the proof of
\refL{LA1d}. 

To see that \ref{La1cii}$\iff$\ref{La1ciii}, we use
the following analysis lemma. (This seems to be less well-known that
\refL{L1}; we guess that it is known, but we have been unable
to find a reference.)
\end{proof}

\begin{lemma}
  \label{L3}
Let $\gc\in(0,1)$.
Suppose that $f:\oi^m\to\bbR$ is an integrable function
such that $\intaamx f=0$ for all sequences $\aam$ of
measurable subsets of\/ $\oi$ such that
$\gl(A_1)=\dots=\gl(A_m)=\gc$.
Then $f=0$ \aex

Moreover, if $\gc<m\qw$, it is enough to consider
disjoint $\aam$.
\end{lemma}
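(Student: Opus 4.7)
The plan is to reduce \refL{L3} to \refL{L1} by upgrading, one coordinate at a time, the class of sets for which $\intaamx f$ vanishes. Concretely, I prove by induction on $k\in\set{0,1,\dots,m}$ that
\begin{equation*}
\intaamx f = 0 \quad\text{whenever $\gl(A_i)=\gc$ for $i>k$ and $A_i$ is arbitrary for $i\le k$.}
\end{equation*}
The base case $k=0$ is the hypothesis of \refL{L3}, and once the case $k=m$ is reached, \refL{L1} gives $f=0$ \aex; all the work therefore lies in the inductive step.

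For the step $k-1\Rightarrow k$, fix $A_1,\dots,A_{k-1}$ arbitrary and $A_{k+1},\dots,A_m$ of measure $\gc$, and set
\begin{equation*}
g(x_k) \= \int_{\oi^{m-1}} f(\xxm)\prod_{i\ne k}\etta_{A_i}(x_i)\dd x_i.
\end{equation*}
For any two sets $A_k,A_k'$ of measure $\gc$, the inductive hypothesis at level $k-1$ yields $\int_{A_k}g = \int_{A_k'}g = 0$, so by subtraction $\int_B g = \int_{B'}g$ for $B\=A_k\setminus A_k'$ and $B'\=A_k'\setminus A_k$. Since such $B,B'$ realise all disjoint pairs of small equal measure, a Lebesgue-point argument (in the spirit of the proof of \refL{L1}) forces $g$ to be \aex{} constant; combined with $\int_{A_k}g = 0$ for $\gl(A_k)=\gc > 0$, this constant must be $0$, so $g\equiv 0$ \aex, and Fubini then gives $\intaamx f = \int_{A_k}g = 0$ for \emph{arbitrary} measurable $A_k$, closing the induction.

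For the disjoint case with $\gc<1/m$, I run the same induction but with ``arbitrary'' replaced by ``of measure at most $\eps$'' and all sets kept mutually disjoint. The hypothesis $\gc<1/m$ ensures that for $\eps$ sufficiently small, the complement $\oi\setminus\bigcup_{i\ne k}A_i$ has measure exceeding $\gc$ at every step, so one can choose $A_k,A_k'$ of measure $\gc$ in this complement with $A_k\setminus A_k'=B$ and $A_k'\setminus A_k=B'$ for any prescribed disjoint pair $B,B'$ of small equal measure. After $m$ steps one obtains $\intaamx f=0$ for every disjoint $\aam$ of measure at most $\eps$, and the ``moreover'' clause of \refL{L1} concludes. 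The only analytic input beyond Fubini is the elementary claim that $\int_B g=\int_{B'}g$ for all disjoint $B,B'$ of equal small measure forces $g$ to be \aex{} constant, easily verified at two Lebesgue points of $g$ via the standard differentiation theorem; this is the only step requiring any real care.
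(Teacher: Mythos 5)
Your proof is correct but organized differently from the paper's. The paper inducts on the dimension $m$, formulating its two auxiliary claims for a general subset $B\subseteq\oi$ (vanishing of all products over $A_i\subset B$ of measure $\gc$ forces vanishing over all $A_i\subseteq B$, with a disjoint analogue); the one-dimensional base case is handled by a padding trick ($f(A_1\cup A_0)=0=f(A_2\cup A_0)$ with a common $A_0$, then partitioning into $N$ equal pieces and applying Lebesgue density points), and the inductive step applies the hypothesis in turn to the two Fubini slices $f_{A_1}$ and $f^{A_2,\dots,A_m}$. You instead induct on the number $k$ of coordinates ``freed'' from the measure-$\gc$ constraint, staying on $\oi^m$ throughout and reducing each step to a single one-variable function $g$; and your one-dimensional step is also different, first showing $g$ is a.e.\ constant from $\int_B g=\int_{B'}g$ (with $B=A_k\setminus A_k'$, $B'=A_k'\setminus A_k$) and then reading off that the constant is $0$. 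Your organization avoids carrying around a general domain $B$, which is a modest simplification. The cost is visible in the disjoint case, where the Lebesgue-point argument must run inside the complement $\oi\setminus\bigcup_{i\ne k}A_i$: one needs density points of that set, equal measures realized by intervals of slightly different widths, and a uniform smallness bound $\eps$ maintained across the $m$ steps (using $\gc<1/m$ to guarantee enough room for $A_k$, $A_k'$ of measure $\gc$). That is exactly the ``one step requiring real care'' you flag, and it is also where the paper's extra generality (arbitrary $B$) earns its keep. Both proofs are at bottom Fubini plus Lebesgue differentiation, and yours is a valid and arguably cleaner alternative.
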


\begin{proof}
For $f\in L^1(\oi^m)$ and $A_1,\dots,A_m\subseteq\oi$, let
\begin{equation*}
  f(A_1,\dots,A_m)\=\intaamx f,
\end{equation*}
and define further the functions 
\begin{equation*}
  f_{A_1}(x_2,\dots,x_m)\=\int_{A_1}f(x_1,x_2,\dots,x_m)\dd x_1
\end{equation*}
and
\begin{equation*}
  \fAAm(x_1)\=\int_{\AAmx}f(x_1,x_2,\dots,x_m)\dd x_2\dotsm\dd x_m.
\end{equation*}
By Fubini's theorem,
\begin{equation}
  \label{vattholm}
f(\aam)=f_{A_1}(\AAm)=\fAAm(A_1).
\end{equation}

We will derive the lemma from the following 
claims, which we will prove by induction in $m$.

\emph\bgroup
  Let $B$ be a measurable subset of\/ $\oi$, let
  $0<\gc<1$ and let $f$ be an integrable function
  on $B^{m}$.
\begin{romenumerate}
  \item
If $\gc<\gl(B)$ and 
$f(\aam)=0$ for all $\aam\subset B$ with
$\gl(A_1)=\dots=\gl(A_m)=\gc$, then
$f(\aam)=0$ for all $\aam\allowbreak\subseteq B$.
  \item
If $m\gc<\gl(B)$ and 
$f(\aam)=0$ for all disjoint $\aam\subset B$ with
$\gl(A_1)=\dots=\gl(A_m)=\gc$, then
$f(\aam)=0$ for all disjoint $\aam\subset B$
with $\gl(A_1),\dots,\gl(A_m)\le\gc$.
\end{romenumerate}
\egroup

Consider first the case $m=1$, in which case (i) and (ii)
have the same hypotheses: $\gc<\gl(B)$ and $f(A)=0$ if $\gl(A)=\gc$.
Suppose that $A_1,A_2\subset B$ with
$\gl(A_1)=\gl(A_2)\le\gd:=\tfrac12(\gl(B)-\gc)$.
Then $\gl(B\setminus(A_1\cup A_2))\ge\gl(B)-2\gd=\gc$, and we
may thus find a set $A_0\subseteq B\setminus(A_1\cup
A_2)$ with $\gl(A_0)=\gc-\gl(A_1)$. The assumption
yields $f(A_1\cup A_0)=0=f(A_2\cup A_0)$,
and thus 
\begin{equation}\label{aa12}
f(A_1)=-f(A_0)=f(A_2).  
\end{equation}
If $A\subset B$ is given with $\gl(A)\le\gd$
and $\gl(A)=\gc/N$ for some integer $N$, let
$A_1=A$ and choose further sets
$A_2,\dots,A_N\subset B$ of the
same measure $\gc/N$ and with $A_1,\dots,A_N$
disjoint. By \eqref{aa12}, then $f(A_k)=f(A_1)=f(A)$ for
every $k\le N$, and thus, by the assumption,
\begin{equation*}
  0=f\Bigpar{\bigcup_{k=1}^N A_k} = \sum_{k=1}^Nf(A_k)=Nf(A).
\end{equation*}
Consequently, $f(A)=0$ for every 
$A\subset B$ with $\gl(A)\le\gd$
and $\gl(A)=\gc/N$. If $x_0$ is a density point of
$B$ (\ie, a point in $B$ that is a Lebesgue point of
$\etta_B$), then there is a sequence $\eps_n\to0$
such that $\gl(B\cap(x_0-\eps_n,x_0+\eps_n))=\gc/n$,
and thus by we just have shown,
$\int_{x_0-\eps_n}^{x_0+\eps_n} f\etta_B=
f(B\cap(x_0-\eps_n,x_0+\eps_n))=0$ for every $n$. If
further $x_0$ is a Lebesgue point of $f\etta_B$, then this
implies $f(x_0)=f(x_0)\etta_B(x_0)=0$. Since \aex{}
$x_0\in B$ satisfies these conditions, $f=0$
\aex{} on $B$, which of course is equivalent to
$f(A)=0$ for every $A\subseteq B$.
This proves both (i) and (ii) for $m=1$.

For $m>1$, we use, as already said, induction, and assume that
the claims are true for smaller $m$.
To prove (i), we fix $A_1\subset B$ with
$\gl(A_1)=\ga$, 
and see by \eqref{vattholm}
that $f_{A_1}$ satisfies the assumptions of (i) on
$B^{m-1}$. Thus, by the induction hypothesis,
$f_{A_1}(\AAm)=0$ for all $\AAm\subseteq B$.
Fixing now instead such $\AAm$, \eqref{vattholm}
shows that $\fAAm(A_1)=0$ for all $\gl(A_1)\subset B$ 
with $\gl(A_1)=\gc$, and thus by the case
$m=1$, 
$\fAAm(A_1)=0$ for all $\gl(A_1)\subset B$.
By \eqref{vattholm} again, this proves the induction hypothesis.
Thus (i) is proved in general.

To prove (ii), we again fix $A_1$, and see by \eqref{vattholm}
that $f_{A_1}$ satisfies the assumptions of (ii) on
$(B\setminus A_1)^{m-1}$, noting that
$(m-1)\gc<\gl(B\setminus A_1)$.
Thus, by the induction hypothesis,
$f_{A_1}(\AAm)=0$ for all disjoint $\AAm\subseteq B\setminus A_1$ with
$\gl(A_k)\le\gc$ for every $k$.
Hence, if we instead fix disjoint sets $\AAm\subset B$
with $\gl(A_k)\le\gc$ for every $k$, then
\eqref{vattholm}
shows that $\fAAm(A_1)=0$ for every $A_1\subset
B\setminus(A_2\cup\dots\cup A_m)$ 
with $\gl(A_1)=\gc$, and thus by the case
$m=1$, 
$\fAAm(A_1)=0$ for every $A_1\subset B\setminus(A_2\cup\dots\cup
A_m)$ 
with $\gl(A_1)\le\gc$.
By \eqref{vattholm} again, this proves the induction hypothesis, and
(ii) is proved. 

We have proved the claims above. We now take $B=\oi$ and
the lemma follows immediately by  \refL{L1}.
\end{proof}

\begin{remark}
\label{Rl3}  
When $\gc=m\qw$, it is not enough to consider disjoint sets $\aam$
in \refL{L3}. In
fact, any $f$ of the type $\sumim g(x_i)$ where $\intoi g=0$ satisfies
the assumption for such $\aam$. 
(We do not know whether these are the only possible $f$.)
Taking $W$ of this type and
$F=K_2$, so that $\psifw=W$, 
we get a counter-example to \refL{LA1c}, 
and to \refT{T1c},
for disjoint sets and $\gc=1/\ff$; see also
\refS{Scut} where this example reappears in a different formulation.
We do not know whether there are such counter-examples for other
graphs $F$.
\end{remark}

\begin{proof}[Proof of \refT{T1c}]
  \refT{T1c} follows by using \refL{LA1c} instead of \refL{LA1} in 
(any version of) the
  proof of \refT{T1} in \refS{SpfT1}.
\end{proof}

\subsection{A single subset of a given size}

The corresponding variation of \refT{T2} is \refT{T2c} where we
consider a single subset $U$ with a given 
fraction $\gc$ of the vertices.
Again, there is a straightforward translation to graphons.

\begin{lemma}
  \label{LA2c}
Suppose that $G_n\to W$ for some graphon $W$ and let
$F$ be a fixed graph and $\gax\ge0$ and $\gc\in(0,1)$ be fixed numbers.
Then the following are equivalent:
\begin{romenumerate}
  \item\label{La2ci}
For every subset $U$ of\/ $V(G_n)$
with $|U|=\floor{\gc|G_n|}$,
\begin{equation*}%\label{la2ci}
  N(F,G_n;U)=\gax|U|^\ff+o\bigpar{|G_n|^\ff}.
\end{equation*}
\item\label{La2cii}
For every subset $A$ of\/ $\oi$ with $\gl(A)=\gc$,
\begin{equation*}%\label{la2cii}
  \int_{A^\ff} \psifw(\xxf)=\gax\gl(A)^\ff.
\end{equation*}
\item\label{La2ciii}
$\tpsifw(\xxf)=\gax$ for \aex{} $\xxf\in\oiff$.
\end{romenumerate}
\end{lemma}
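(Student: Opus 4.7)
The plan is to follow the same two-part template as for \refL{LA2} and \refL{LA1c}: a combinatorial translation of \ref{La2ci} into an integral condition on $W$, then an analytic extraction of an a.e.\ pointwise condition on $\tpsifw$.

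\textbf{Step 1: \ref{La2ci}$\iff$\ref{La2cii}.} I would merge the one-set argument from \refL{LA2} with the concentration argument for fixed size from \refL{LA1c}. Pulling back via measure-preserving bijections $\gf_n$ so that $W_n \= W_{G_n}^{\gf_n}$ has $\cn{W_n-W}\to0$, condition \ref{La2ci} becomes $\int_{U''^{\ff}}\psifwn = \gax(|U|/|G_n|)^{\ff} + o(1)$ for every $U''\subseteq\oi$ that is a union of intervals $\injy$ with measure $\floor{\gc|G_n|}/|G_n|$. Given such a $U''$, extending to $A\supseteq U''$ with $\gl(A)=\gc$ adds a set of measure $o(1)$, so the analog of \eqref{adiff} derives the preceding estimate from \ref{La2cii}. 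Conversely, for $A$ with $\gl(A)=\gc$ I would take the single random set $B\=\bigcup_{j:J_j=1}\injy$ as in the proof of \refL{LA2}, with independent Bernoulli $J_j$ of success probability $\gl(A\cap\injy)/\gl(\injy)$: one has $\E\gl(B)=\gc$ and $\Var\gl(B)=O(|G_n|\qw)$, so Chebyshev (exactly as in \refL{LA1c}) gives $|\gl(B)-\gc|\le\gd_n\=|G_n|^{-1/3}$ with probability $1-o(1)$, and on this event $B$ can be adjusted to a $U''$ of exact size with $\gl(B\setdiff U'')=O(\gd_n)$. Taking expectations reproduces the analog of \eqref{cecilia}, and $\cn{W_n-W}\to0$ converts this to \ref{La2cii}.

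\textbf{Step 2: \ref{La2cii}$\iff$\ref{La2ciii}.} The direction \ref{La2ciii}$\Rightarrow$\ref{La2cii} is immediate from \eqref{symm1}. For the converse I would prove the following symmetric one-set analog of \refL{L3}: \emph{if $f:\oi^m\to\bbR$ is bounded and measurable and $\int_{A^m}f=0$ for every $A\subseteq\oi$ with $\gl(A)=\gc$, then $\tf=0$ a.e.} (Boundedness holds here because $f=\tpsifw-\gax$ and $0\le W\le1$.) The proof goes by induction on $m$. The base case $m=1$ is the $m=1$ case of \refL{L3}. For $m\ge2$, replacing $f$ by $\tf$ one may assume $f$ symmetric. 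For fixed $A$ with $\gl(A)=\gc$, a swap $A'=(A\setminus B)\cup B'$ with $B\subseteq A$, $B'\subseteq\cpax$ and $\gl(B)=\gl(B')=\gd$ preserves $\gl(A')=\gc$, so $\int_{A'^m}f=0$; expanding $\etta_{A'}=\etta_A+(\etta_{B'}-\etta_B)$ and using symmetry of $f$ together with $\|f\|_\infty<\infty$ yields
\begin{equation*}
  0 = m\Bigpar{\int_{B'}G_A-\int_B G_A} + O(\gd^2),
\end{equation*}
where $G_A(z)\=\int_{A^{m-1}}f(y_1,\dots,y_{m-1},z)\dd y_1\cdots\dd y_{m-1}$. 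Shrinking $B,B'$ to small intervals centered at Lebesgue points $x\in A$, $x'\in\cpax$ (adjusting their sizes by $o(\gd)$ so that $\gl(B)=\gl(B')$), dividing by $\gd$, and letting $\gd\to0$ gives $G_A(x)=G_A(x')$; hence $G_A$ is a.e.\ constant on $\oi$, and the identity $\int_A G_A=\int_{A^m}f=0$ forces that constant to be $0$.

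To close the induction, observe that $A\mapsto G_A(z)$ is uniformly Lipschitz in the symmetric-difference metric with constant $(m-1)\|f\|_\infty$, so a countable dense family of admissible $A$ yields a single null set $N\subseteq\oi$ off which $G_A(z)=0$ for \emph{every} $A$ with $\gl(A)=\gc$. For $z\notin N$ the symmetric function $g_z\=f(\cdot,z)$ on $\oi^{m-1}$ satisfies the induction hypothesis, so $g_z=0$ a.e., and Fubini gives $f=0$ a.e.\ on $\oi^m$. The main obstacle is Step 2: neither \refL{L2} (which demands all measurable $A$) nor \refL{L3} (which demands freely chosen disjoint sets of independent measures) applies, so the swap-and-differentiate argument, the boundedness-based control of the $O(\gd^2)$ remainder, and the continuity-in-$A$ trick to swap the two quantifiers must all be assembled from scratch.
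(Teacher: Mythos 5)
Your Step 1 is essentially the paper's: it too combines the one-random-set machinery of \refL{LA2} with the Chebyshev concentration and set-adjustment arguments of \refL{LA1c}. Your Step 2, however, takes a genuinely different route from the paper, which proves \ref{La2cii}$\iff$\ref{La2ciii} via \refL{L4}: there the hypothesis is first propagated from $\gl(A)=\gc$ to $\gl(A)=r\gc$ (rational $r\ge1$) by a random-subset averaging argument, and then an ``anchor set'' $A_0$ together with a $\xi\in\setoi^m$ coefficient-extraction identity (as in \refL{L2}) reduces the statement to \refL{L1} or \refL{L3}. You instead argue by induction on $m$: a swap $A\mapsto (A\setminus B)\cup B'$ with $\gl(B)=\gl(B')$ preserves the constraint, so expanding $\etta_{A'}=\etta_A+(\etta_{B'}-\etta_B)$ and using boundedness to kill the $O(\gd^2)$ remainder yields, after Lebesgue differentiation, that $G_A$ is a.e.\ constant and hence zero; the Lipschitz-in-$A$ continuity and a countable dense family of admissible $A$ then let you invoke the inductive hypothesis at a.e.\ $z$. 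Both proofs are correct, but they have different scope and flavor: the paper's L4 argument is combinatorial and works for arbitrary integrable $f$, whereas your swap-and-differentiate argument needs $f$ bounded (which you correctly note is harmless here, since $f=\tpsifw-\gax$ is bounded) because both the $O(\gd^2)$ control of the higher-order terms and the uniform Lipschitz estimate $|G_A(z)-G_{A'}(z)|\le(m-1)\norm{f}_\infty\,\gl(A\setdiff A')$ require it. Yours has the advantage of being self-contained once the $m=1$ case is known (no need for the rational-dilation step), and it stays closer in spirit to the differentiation technique used throughout the paper (Lemmas \ref{L1}, \ref{L3}), at the mild cost of an extra separability/countable-dense-family step to pass the null set through the induction.
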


\begin{proof}
The equivalence (i)$\iff$(ii) is proved as for \refL{LA1c},
  using single sets $U$, $A$ and $B$ as in the proof of \refL{LA2}.

The equivalence (ii)$\iff$(iii) follows by the following lemma, which
strenthens \refL{L2} by considering subsets of a given size only.
\end{proof}

\begin{lemma}
  \label{L4}
Let $\gc\in(0,1)$.
Suppose that $f:\oi^m\to\bbR$ is an integrable function
such that $\int_{A^m} f=0$ for all 
measurable subsets $A$ of $\oi$ with
$\gl(A)=\gc$.
Then $\tf=0$ \aex
\end{lemma}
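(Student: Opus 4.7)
The plan is to mirror the structure of \refL{L2}, reducing to the case where $f$ is symmetric, and then to induct on $m$ using a Lebesgue-differentiation / variation argument. Since $\int_{A^m} f = \int_{A^m} \tf$ by the symmetry of $A^m$, the hypothesis passes to $\tf$; as the conclusion only concerns $\tf$, I may replace $f$ by $\tf$ and aim to prove $f = 0$ \aex{} for symmetric $f$.

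For the base case $m = 1$, given two sets $A, A'$ with $\gl(A) = \gl(A') = \gc$ and $A' = (A \setminus V) \cup U$ for small disjoint $U \subseteq \oi \setminus A$ and $V \subseteq A$ of measure $\eps$, subtracting $\int_A f = \int_{A'} f = 0$ gives $\int_U f = \int_V f$. Centring $U,V$ at arbitrary Lebesgue points and dividing by $\eps$ forces $f$ to agree (\aex) with a constant, and $\int_A f = 0$ forces this constant to vanish.

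For the inductive step $m \to m+1$, fix $A$ with $\gl(A) = \gc$ and again perturb to $A_\eps := (A \setminus V) \cup U$ with $\gl(U) = \gl(V) = \eps$, $U \subseteq \oi \setminus A$, $V \subseteq A$. The identity
\begin{equation*}
  0 = \int_{A_\eps^{m+1}} f - \int_{A^{m+1}} f
\end{equation*}
expands, using $\etta_{A_\eps} = \etta_A + \etta_U - \etta_V$, into a polynomial in $\etta_U, \etta_V$. Grouping by perturbation degree $n_U + n_V \ge 1$ and using the symmetry of $f$, the degree-$1$ part is $(m+1)\bigl[\int_{A^m \times U} f - \int_{A^m \times V} f\bigr]$, while all higher-degree contributions are integrals over product sets of measure $O(\eps^2)$, hence $O(\eps^2)$ since $f$ is bounded in the intended applications (for general integrable $f$ this requires a truncation/approximation argument). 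After dividing by $\eps$ and letting $\eps \to 0$ with $U, V$ centred at Lebesgue points of $u \mapsto H_A(u) \= \int_{A^m} f(x_1,\dots,x_m,u)\,dx_1\cdots dx_m$, we obtain $H_A(u_0) = H_A(v_0)$ for \aex{} $u_0 \in \oi \setminus A$ and $v_0 \in A$, so $H_A$ is \aex{} constant on $\oi$; since $\int_A H_A = \int_{A^{m+1}} f = 0$, this constant must be $0$.

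The final step swaps the quantifiers: for each such $A$, $H_A(u) = 0$ for \aex{} $u$, with the null set depending on $A$. Choosing a countable dense family $\{A_n\}$ of $\gc$-measure sets (e.g., finite unions of dyadic intervals whose total measure equals $\gc$), the relations $H_{A_n}(u) = 0$ hold simultaneously for \aex{} $u$; by $L^1$-continuity of $A \mapsto H_A(u)$ on indicator functions (valid for \aex{} $u$, via dominated convergence with dominator $|f(\cdot,u)| \in L^1$), this extends to $H_A(u) = 0$ for every $A$ with $\gl(A) = \gc$. Thus for \aex{} $u$, the slice $f(\cdot,u)$ on $\oi^m$ is symmetric in its $m$ arguments and satisfies the hypothesis of \refL{L4} with the same $\gc$; by the induction hypothesis $f(\cdot,u) = 0$ \aex, and Fubini gives $f = 0$ \aex{} on $\oi^{m+1}$. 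The main technical obstacle is controlling the higher-order terms in the expansion for general $L^1$ functions (the Lebesgue-differentiation estimate can fail on the diagonal $U^2$); boundedness of $f$ in the intended applications sidesteps this cleanly.
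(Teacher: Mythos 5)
Your proof takes a genuinely different route from the paper: a direct induction on $m$, driven by perturbing $A$ to $A_\eps=(A\setminus V)\cup U$ and extracting the degree-one term in the expansion of $\etta_{A_\eps}=\etta_A+\etta_U-\etta_V$. The paper instead first extends the hypothesis probabilistically to all sets of measure $r\gc$ for rational $r\ge1$ (partitioning such a set into $M$ pieces, taking a random $N$-subset $B$ of measure $\gc$, and averaging $\int_{B^m}f=0$), and then runs the $\set{0,1}^m$-monomial argument of \refL{L2}, with an extra padding set $A_0$ of measure $\gc$, to isolate $\int_{A_1\times\dotsm\times A_m}\tf=0$ for disjoint $A_i$, finishing by \refL{L1} or \refL{L3}. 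Your base case $m=1$ is correct, and the quantifier-swap via a countable dense family of $\gc$-measure indicators plus $L^1$-continuity of $A\mapsto H_A(u)$ is sound (though ``finite unions of dyadic intervals of total measure $\gc$'' does not literally work when $\gc$ is not dyadic-rational; use any countable set of indicators of measure $\gc$ that is $L^1$-dense, which exists by separability of $L^1$).

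There is, however, a real gap in the inductive step, which you flag but do not close. The higher-degree terms are integrals of $f$ over sets such as $A^{m-1}\times U\times U$ of Lebesgue measure $O(\eps^2)$. For merely integrable $f$, absolute continuity gives only $o(1)$, not the $o(\eps)$ needed after dividing by $\eps$; the Lebesgue point at which you centre $U$ controls the degree-one term $\int_{A^m\times U}f$ but says nothing about the diagonal contribution $\int_{A^{m-1}\times U^2}f$. Boundedness of $f$ rescues this, but the lemma is stated for general $f\in L^1$, and that is the generality in which \refL{L1}, \refL{L3} and the paper's own proof of \refL{L4} operate. Moreover the ``truncation/approximation argument'' you invoke is not readily available: truncating to $f\ett{|f|\le N}$ destroys the hypothesis $\int_{A^m}f=0$, so there is no obvious reduction to the bounded case. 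The paper's averaging argument avoids the issue entirely, since the only estimate it needs is $\int_{U_N}|f|\to0$ as $\gl(U_N)\to0$, which is precisely absolute continuity. If you restrict the claim to bounded $f$ (which does suffice for every application in the paper, where $f$ is a bounded polynomial in a graphon), your argument is correct and self-contained; as a proof of the lemma as stated it is incomplete.
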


\begin{proof}
We begin by showing that the vanishing property extends to sets
$A$ with measure greater than $\gc$ as follows:
\begin{equation}
  \label{aa}
\text{If $A\subseteq\oi$ with $\gl(A)=r\gc$ for some
rational $r\ge1$, then} \int_{A^m}f=0.
\end{equation}
(The restriction to rational $r$ may easily be removed by continuity,
but it will suffice for us.)
To see this, let $N$ be an integer such that $M\=rN$ is
an integer, and partition $A$ into $M$ subsets
$A_1,\dots,A_M$ of equal measure
$\gl(A_i)=\gl(A)/M=r\gc/M=\gc/N$. Pick $N$ of
the sets $A_i$ at random (uniformly over all $\binom
MN$ possibilities), and let $B$ be their union. Thus
$B$ is a random subset of $\oi$ with
$\gl(B)=\gc$, and thus by the assumption
$\int_{B^m}f=0$. Taking the expectation we find
\begin{equation}\label{wattholma}
  0=\E\int_{B^m}f
=\sum_{i_1,\dots,i_m=1}^M\P(A_{i_1},\dots,A_{i_m}\subseteq B) 
\intaaiimx f.
\end{equation}
If $\iim$ are distinct, then, letting $\fallm{N}$ denote the
falling factorial,
\begin{equation*}
  \P(A_{i_1},\dots,A_{i_m}\subseteq B) =
  \frac{\fallm{N}}{\fallm{M}}
=\parfrac NM^m+O\parfrac 1N = r^{-m}+O\parfrac 1N.
\end{equation*}
This fails if two or more of $\iim$ coincide (in fact, the
probability is $\fall N\nu/\fall M\nu\approx
r^{-\nu}$, where $\nu$ is the number of distinct
indices among $\iim$), so we let
$U_N\subseteq\oi^m$ be the union of all $\aaiimx$
with at least two coinciding indices. 
By \eqref{wattholma},
\begin{equation*}
  \begin{split}
    \frac{\fallm{N}}{\fallm{M}}\int_{A^m}f
&= \sum_{\iim=1}^M  \frac{\fallm{N}}{\fallm{M}} \intaaiimx f
\\&
= \sum_{\iim=1}^M 
\lrpar{\frac{\fallm{N}}{\fallm{M}}-\P(\aaiimx\subseteq B)} \intaaiimx f,	
  \end{split}
\end{equation*}
and thus
\begin{equation}\label{vattholma}
\lrabs{ \frac{\fallm{N}}{\fallm M}\int_{A^m}f}
\le \int_{U_N}|f|.
\end{equation}

Now let $N\to\infty$ (with $rN$ integer). 
Note that $\gl(U_N)\le \binom m2 N^{m-1}(\ga/N)^m\le\binom m2/N$.
Thus
$\gl(U_N)\to0$ and hence, since $f$ is integrable, $\int_{U_N}|f|\to0$.
It follows from \eqref{vattholma} and $\fallm{N}/\fallm{M}\to r^{-m}$ that
$r^{-m}\int_{A^m}f=0$, which proves \eqref{aa}.

Next, let $\aam$ be arbitrary disjoint subsets of
$\oi$ with equal measure
$\gl(A_1)=\dots=\gl(A_m)=q\gc$, for some rational
$q$ such that $(1+mq)\gc\le1$. Choose
$A_0\subseteq\oi\setminus\bigcup_1^m A_i$ with $\gl(A_0)=\gc$.
For any  sequence $\xim\in\setoi^m$, let $\xi_0\=1$ and take
$A\=\bigcup_{i\ge0:\xi_i=1} A_i$. Then $\etta_A=\sumiom\xi_i\etta_{A_i}$
and we argue as in the proof of \refL{L2} with an extra set
$A_0$: we have
\begin{equation}
  \label{l4a}
0=\int_{A^m} f = \int_{\oi^m} f\etta_{A^m}
=\sum_{i_1,\dots,i_m=0}^m\xi_{i_1}\dotsm\xi_{i_m}
\int_{A_{i_1}\times\dots\times A_{i_m}} f.
\end{equation}
As in the proof of \refL{L2}, it follows that the coefficient
of $\xi_1\dotsm\xi_m$ in \eqref{l4a} must
vanish,
and this coefficient comes from the terms
where $\iim$ is
a permutation of $1,\dots,m$. We thus obtain
\begin{equation*}
  0=\sum_{\gs\in\fS_m}\int_{\aagsmx} f
=m!\int_{\aamx}\tf.
\end{equation*}
The result follows by Lemma \ref{L1} or \ref{L3}, applied to $\tf$.
\end{proof}

\begin{proof}[Proof of \refT{T2c}] 
  \refT{T2c} follows by combining \refL{LA2c} and \refL{LC2ae}, \cf{}
  \refS{SpfT2}. 
\end{proof}

\section{Induced subgraph counts}

When considering counts of induced subgraphs, we translate the
conditions to graphons similarly as above.

\begin{lemma}
  \label{LAi}
Suppose that $G_n\to W$ for some graphon $W$ and let
$F$ be a fixed graph and $\gax\ge0$ a fixed number.
Then the following are equivalent:
\begin{romenumerate}
  \item\label{LAiU}
For all subsets $\uuf$ of\/ $V(G_n)$, 
\begin{equation*}
  \Nq(F,G_n;\uuf)=\gax\prodif|U_i|+o\bigpar{|G_n|^\ff}.
\end{equation*}
\item\label{LAiA}
For all subsets $\aaf$ of\/ $\oi$,
\begin{equation*}
  \intaafx \psiqfw(\xxf)=\gax\prodif\gl(A_i).
\end{equation*}
\item
$\psiqfw(\xxf)=\gax$ for \aex{} $\xxf\in\oiff$.
\end{romenumerate}
We may further in \ref{LAiU} and \ref{LAiA} add the
conditions that, as in \refL{LA1d}, the sets be disjoint, or
that, as in \refL{LA1c},
$|U_i|=\floor{\gc|G_n|}$ and
$\gl(A_i)=\gc$ for a fixed $\gc\in(0,1)$, or,
provided $\ga<1/\ff$, both.
\end{lemma}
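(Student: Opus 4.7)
The plan is to mirror the proofs of Lemmas \ref{LA1}, \ref{LA1d}, and \ref{LA1c}, replacing $N(F,G_n;\uuf)$ by $\Nq(F,G_n;\uuf)$, the graphon functional $\psifw$ by $\psiqfw$, and using the identity \eqref{nfguq} in place of \eqref{nfgu}. Because the overall three-part structure (combinatorial translation, analytic differentiation, algebraic conclusion) does not depend on whether we count arbitrary or induced copies, each step should transfer with only cosmetic changes. Moreover, since the extra conditions in the second part of the statement (disjoint sets, sets of a fixed fraction $\gc$, or both) are purely conditions on the sets $\uuf$ and $\aaf$ and not on the kernel being integrated, the modifications in \refSS{SSdisjoint} and \refSS{SSgivensize} can be copied verbatim.

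To prove \ref{LAiU}$\iff$\ref{LAiA}, I would pick \mpx{} bijections $\gf_n$ with $W_n\=W_{G_n}^{\gf_n}$ satisfying $\cn{W_n-W}\to0$, use \eqref{nfguq} to rewrite \ref{LAiU} as $\int_{\uuiifx}\psiqfx{W_n}=\gax\prodif\gl(U_i'')+o(1)$ for the distinguished sets $U_i''$ that are unions of the intervals $\injy$, and then extend to arbitrary measurable $\aaf$ by the random-sets trick: let $a_{ij}\=\gl(A_i\cap \injy)/\gl(\injy)$, let $J_{ij}$ be independent $0$--$1$ variables with $\P(J_{ij}=1)=a_{ij}$, set $B_i\=\bigcup_{j:J_{ij}=1}\injy$, and take expectations exactly as in \eqref{sofie}. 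The only ingredient that needs a brief check is the cut-norm estimate $\cn{\psiqfx{W}-\psiqfx{W'}}=O(\cn{W-W'})$; this follows from the corresponding bound for $\psifw$ cited from \cite{BCLSV1,BR}, because expanding the $(1-W(x_i,x_j))$ factors in \eqref{psiqfw} writes $\psiqfw$ as a signed sum
\begin{equation*}
\psiqfw=\sum_{S\subseteq\binom{[\ff]}{2}\setminus E(F)} (-1)^{|S|} \Psi_{F\cup S,W},
\end{equation*}
a fixed linear combination of products $\psifx{F\cup S,W}$ over graphs $F\cup S$ on $\ff$ vertices. The equivalence \ref{LAiA}$\iff$(iii) is then an immediate application of \refL{L1} to $\psiqfw-\gax$ (and in the given-size variant, of \refL{L3}).

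For the supplementary statement, disjointness is handled by choosing the indicator vectors $(J_{ij})_i$ to satisfy $\sum_i J_{ij}\le 1$ for each $j$, exactly as in the proof of \refL{LA1d}; imposing $|U_i|=\floor{\gc|G_n|}$ is handled by the Chebyshev concentration argument for $\gl(B_i)$ around $\gc$ used in the proof of \refL{LA1c}, together with the modification bound \eqref{adiff}; and the case where both conditions are imposed with $\ga<1/\ff$ is the common intersection of the two arguments. I do not anticipate a genuine obstacle: the main technical point is the cut-norm bound for $\psiqfw$, and the only mildly delicate passage is the simultaneous imposition of disjointness and a fixed measure, which however already works because $\ff\ga<1$ leaves enough ``room'' in $\oi$ to realize both $B_i$ and the adjusted $U_i''$. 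The analytic step \ref{LAiA}$\implies$(iii) causes no trouble because, unlike in \refL{LA2}, we are not symmetrizing and therefore can appeal directly to Lemmas \ref{L1} or \ref{L3} without the workaround needed for the single-subset case.
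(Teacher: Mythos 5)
Your proposal is correct and follows exactly the paper's approach: the paper's proof of \refL{LAi} literally consists of the single line ``As for \refL{LA1}, using \eqref{nfguq} instead of \eqref{nfgu}, and with the extra conditions treated as for Lemmas \ref{LA1d} and \ref{LA1c}.'' Your inclusion--exclusion identity $\psiqfw=\sum_{S\subseteq\binom{[\ff]}{2}\setminus E(F)}(-1)^{|S|}\Psi_{F\cup S,W}$ supplies a detail the paper leaves implicit, namely why the cut-norm Lipschitz bound transfers from $\psifw$ to $\psiqfw$; this is a welcome but not essential elaboration, since it is the same route through the same lemmas.
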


\begin{proof}
  As for \refL{LA1}, using \eqref{nfguq} instead of \eqref{nfgu},
and with the extra conditions treated as for Lemmas \refand{LA1d}{LA1c}.
\end{proof}

\begin{lemma}
  \label{LAis}
Suppose that $G_n\to W$ for some graphon $W$ and let
$F$ be a fixed graph and $\gax\ge0$ a fixed number.
Then the following are equivalent:
\begin{romenumerate}
  \item\label{LAisU}
For all subsets $U$ of\/ $V(G_n)$, 
\begin{equation*}
  \Nq(F,G_n;U)=\gax|U|^\ff+o\bigpar{|G_n|^\ff}.
\end{equation*}
\item\label{LAisA}
For all subsets $A$ of\/ $\oi$,
\begin{equation*}
  \intaf \psiqfw(\xxf)=\gax\gl(A)^\ff.
\end{equation*}
\item\label{LAisPsi}
$\tpsiqfw(\xxf)=\gax$ for \aex{} $\xxf\in\oiff$.
\end{romenumerate}
We may further in \ref{LAisU} and \ref{LAisA} add the
conditions that, as in \refL{LA2c},
$|U|=\floor{\gc|G_n|}$ and
$\gl(A)=\gc$ for a fixed $\gc\in(0,1)$.
\end{lemma}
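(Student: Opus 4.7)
The plan is a mechanical adaptation of the proof of \refL{LA2} (and, for the size-constrained version, of \refL{LA2c}), with $N$ replaced by $\Nq$, the integrand $\psifw$ replaced by $\psiqfw$, and \eqref{nfgu} replaced by \eqref{nfguq}. Since $\psiqfw$ is again a polynomial in the values $W(x_i,x_j)$ (with factors $W(x_i,x_j)$ for edges of $F$ and $1-W(x_i,x_j)$ for non-edges), all the continuity-in-cut-norm estimates from \cite{BCLSV1} used in \refL{LA1} and \refL{LA2} go through unchanged: for any two graphons $W,W'$ and any measurable $A\subseteq\oi$,
\begin{equation*}
\biggabs{\int_{A^\ff}\bigpar{\psiqfx{W}-\psiqfx{W'}}}=O(\cn{W-W'}),
\end{equation*}
by edge-by-edge telescoping. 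This is the only analytic input specific to the induced case; no new difficulty arises.

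For \ref{LAisU}$\iff$\ref{LAisA}, I would proceed verbatim as in \refL{LA2}. Choose \mpx{} bijections $\gf_n:\oi\to\oi$ with $W_n\=W_{G_n}^{\gf_n}$ satisfying $\cn{W_n-W}\to0$, set $I''_{nj}\=\gf_n\qw(I_{nj})$ and $U''\=\bigcup_{j\in U}I''_{nj}$. Then \eqref{nfguq} and a change of variables rewrite \ref{LAisU} as $\int_{(U'')^\ff}\psiqfx{W_n}=\gax\gl(U'')^\ff+o(1)$, uniformly over all $U''$ that are unions of the sets $I''_{nj}$. Use the single-random-set construction $B\=\bigcup_{j:J_j=1}I''_{nj}$ with independent Bernoulli $J_j$ of parameter $a_j\=\gl(A\cap I''_{nj})/\gl(I''_{nj})$ to extend the identity to arbitrary measurable $A\subseteq\oi$, exactly as in \eqref{cecilia}; the $O(|G_n|^{-1})$ error from indices that coincide is absorbed into $o(1)$. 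Finally, the cut-norm continuity above lets us replace $W_n$ by $W$ under the integral, yielding \ref{LAisA}.

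For \ref{LAisA}$\iff$\ref{LAisPsi}, apply \refL{L2} to the function $\psiqfw-\gax$ and use \eqref{symm1} to convert the hypothesis on $\int_{A^\ff}$ into the conclusion $\tpsiqfw=\gax$ a.e. The implication \ref{LAisPsi}$\implies$\ref{LAisA} is immediate from \eqref{symm1}.

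For the constrained version ($|U|=\floor{\gc|G_n|}$ and $\gl(A)=\gc$), the only modification is that the random set $B$ above does not automatically have measure $\gc$. As in the proof of \refL{LA1c} (single-set case of \refL{LA2c}), I would compute $\E\gl(B)=\gc$ and $\Var\gl(B)=O(|G_n|\qw)$, invoke Chebyshev to obtain $|\gl(B)-\gc|\le\gdn$ w.h.p.\ with $\gdn\=|G_n|\qqqw$, and then trim/augment $B$ by a set of measure $O(\gdn)$ to land at measure exactly $\floor{\gc|G_n|}/|G_n|$, using the trivial inequality \eqref{adiff} (for $\psiqfw$ in place of $\psifw$) to bound the resulting change in the integral. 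The pointwise conclusion in this case then comes from \refL{L4} applied to $\psiqfw-\gax$.

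The only step that warrants care is verifying the cut-norm continuity of $\psiqfw$, but this is entirely routine once one expands the product into $2^{e(\cp F)}$ monomials each of which is an instance of $\psifx{\cdot}$ for a suitable supergraph of $F$, and applies the telescoping estimate of \cite{BCLSV1} to each monomial. Everything else is a transcription of the combinatorial/analytic steps from \refS{SpfT2} (and \refSS{SSgivensize}).
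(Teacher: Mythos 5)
Your proposal is correct and matches the paper's own (very terse) proof, which simply says to repeat the proof of \refL{LA2} with \eqref{nfguq} in place of \eqref{nfgu} and to handle the size constraint as in \refL{LA2c} via \refL{L4}. The one substantive point you add — verifying cut-norm continuity of $W\mapsto\int_{A^\ff}\psiqfx{W}$ by expanding $\prod_{ij\notin E(F)}(1-W(x_i,x_j))$ into $\pm\psifx{\,\cdot\,}$ terms for supergraphs of $F$ and applying the telescoping bound of \cite{BCLSV1} to each — is exactly the routine inclusion--exclusion argument the paper leaves implicit, and it is sound.
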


\begin{proof}
  As for \refL{LA2}, using \eqref{nfguq} instead of \eqref{nfgu},
and with the extra size conditions treated as for \refL{LA2c},
using \refL{L4}.
\end{proof}

However, it is now more complicated to do the algebraic step, \ie,
to solve the equations in (iii) in these lemmas; the reason is that
$\psiqfw$  and $\tpsiqfw$ 
are not monotone in $W$. For $\psiqfw$, we can argue as
follows. (See also the somewhat different argument in \cite{ShapiraY}.) 

\begin{lemma}
  \label{LI}
Let $F$ be a graph with $\ff>1$, let $W$ be a graphon and let $p\in(0,1)$. 
If $\psiqfw(\xxf)=p^{e(F)}(1-p)^{\binom{\ff}2-e(F)}$ for 
every $\xxf\in\oiff$, then either $W=p$ or $W=\cpp$.
\end{lemma}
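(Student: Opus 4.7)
My plan is to follow the structure of the proof of \refL{LC1}, with the additional complication that $\psiqfw$ is non-monotone in $W$; this non-monotonicity is precisely what produces the two candidate constant solutions $p$ and $\cpp$. First I dispose of the exceptional cases: if $F$ is complete then $\psiqfw=\psifw$ and \refL{LC1} directly gives $W\equiv p=\cpp$, and the empty case reduces to this via $W\mapsto 1-W$. I therefore assume throughout that $e(F)>0$ and $e(\cp F)>0$. Under the pointwise hypothesis $\psiqfw\equiv\gbfp>0$ this assumption also forces $0<W(x,y)<1$ everywhere, for if $W(x,y)=0$ I can pick a labeling with $\set{1,2}\in E(F)$ and set $x_1=x,x_2=y$ in a tuple to make $\psiqfw$ vanish (and symmetrically for $W(x,y)=1$ using a non-edge).

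Next I would establish pointwise that $W(x,y)\in\set{p,\cpp}$. Setting $x_1=\dots=x_\ff=x$ in the hypothesis yields $h(W(x,x))=h(p)$, where $h(t)\=t^{e(F)}(1-t)^{e(\cp F)}$, a function that takes each value in its range at most twice on $\oi$; hence $W(x,x)\in\set{p,\cpp}$ for every $x$. For the off-diagonal values I would mimic the symmetrization of \refL{LC1ae}: for each $\gs\in\fsf$ the hypothesis applied to $(x_{\gs(1)},\dots,x_{\gs(\ff)})$ gives $\psiqfx{\gs F,W}(\xxf)=\gbfp$, and multiplying over $\gs$ while counting how often each pair $\set{i,j}$ occurs as an edge of $\gs F$ yields
\begin{equation*}
\prod_{1\le i<j\le\ff} h(W(x_i,x_j))=h(p)^{\binom{\ff}{2}}
\end{equation*}
for every $\xxf\in\oiff$. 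Taking logarithms (legitimate since $h(W)>0$), applying this identity to each $\ff$-subset of variables in a tuple from $\oi^{\ff+2}$, and invoking \refL{Lassym} with $d=2$ gives $h(W(x,y))=h(p)$ for every $x,y\in\oi$, so $W(x,y)\in\set{p,\cpp}$ pointwise.

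For the final step, set $f(x,y)\=\ett{W(x,y)=p}$, a symmetric $\setoi$-valued function. Substituting each $W(x_i,x_j)\in\set{p,\cpp}$ into $\psiqfw=\gbfp$ and using $(\cpp/p)^{e(F)}=((1-p)/(1-\cpp))^{e(\cp F)}$, each configuration must satisfy $e(\cp F)\cdot a=e(F)\cdot b$, where $a$ counts edges of $F$ and $b$ counts non-edges of $F$ on which $W$ takes the value $p$. Plugging in $x_1=x$ and $x_2=\dots=x_\ff=y$ this reduces to $\bigpar{d_1 N-e(F)(\ff-1)}\bigpar{f(x,y)-f(y,y)}=0$ with $N\=\binom{\ff}{2}$. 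If $F$ is non-regular, picking the labeling so that vertex $1$ has $d_1\neq 2e(F)/\ff$ gives $f(x,y)=f(y,y)$, and the same argument with the roles of $x$ and $y$ swapped gives $f(x,y)=f(x,x)$, forcing $f$ constant. If $F$ is regular the bracket vanishes, so I would instead test the configuration $x_1=x_2=x$, $x_3=\dots=x_\ff=y$; after an analogous calculation the constraint becomes $c\bigpar{f(x,x)-2f(x,y)+f(y,y)}=0$ with $c\in\set{e(\cp F),-e(F)}$ depending on whether $\set{1,2}\in E(F)$, hence nonzero, so that $f(x,x)+f(y,y)=2f(x,y)$; the $\setoi$-valuedness of $f$ then forces $f(x,x)=f(x,y)=f(y,y)$. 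Thus $f$ is constant and $W\equiv p$ or $W\equiv\cpp$.

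The hard part is the last step: the non-monotonicity of $\psiqfw$ in $W$ leaves open the possibility of a non-constant $\set{p,\cpp}$-valued graphon, and ruling it out requires the $\setoi$-valuedness of $f$ together with well-chosen test configurations, with a natural case split according to whether $F$ is regular. I expect no new difficulty in the non-regular case beyond the singleton test tuple, but the regular case requires the slightly more elaborate two-element test tuple and the specific numerical identity that makes the coefficients of $f(x,x)$ and $f(y,y)$ coincide.
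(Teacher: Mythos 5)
Your proof is correct, but it takes a genuinely different route from the paper's. The paper only derives $W(x,x)\in\set{p,\cpp}$ from the diagonal substitution and then argues directly with the values $W(x,y),W(x,x),W(y,y)$: for non-regular $F$ it extracts from the single-vertex substitution a non-singular $2\times2$ linear system in $\log\bigl(W(x,y)/W(x,x)\bigr)$ and $\log\bigl((1-W(x,y))/(1-W(x,x))\bigr)$, concluding $W(x,y)=W(x,x)$, and for regular $F$ (neither complete nor empty) it substitutes $x_i=x_j=y$ for an edge pair and for a non-edge pair to obtain $W(x,x)W(y,y)=W(x,y)^2$ and $(1-W(x,x))(1-W(y,y))=(1-W(x,y))^2$, subtracts, and gets $\bigl(W(x,x)-W(y,y)\bigr)^2=0$. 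You instead first establish the stronger pointwise fact $W(x,y)\in\set{p,\cpp}$ for \emph{all} pairs, by adapting the symmetrization-plus-\refL{Lassym} trick that the paper only deploys in \refL{LC1ae}, and then recast the constraint $\psiqfw\equiv\gbfp$ as the arithmetic relation $e(\cp F)\,a=e(F)\,b$ on the $\setoi$-valued indicator $f=\ett{W=p}$. The paper's route is shorter because it skips both the reduction to $\set{p,\cpp}$-values and the Lassym machinery; yours is conceptually pleasant because the final step becomes a combinatorial check on a two-coloring, and the fact that $f(x,x)+f(y,y)=2f(x,y)$ with $f\in\setoi$ forces equality is arguably cleaner than the squaring computation. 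Your preliminary observation that $0<W<1$ everywhere (needed for the logarithms) is also a helpful explicit step that the paper leaves implicit. The numerical bookkeeping — the coefficient $d_1\binom{\ff}{2}-e(F)(\ff-1)$ in the non-regular case and the coefficients $e(\cp F)$ and $-e(F)$ in the regular case — checks out.
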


\begin{proof}
  First, take all $x_i$ equal. Recalling the definitions
  \eqref{psiqfw} and \eqref{beta}, we see that
  \begin{equation*}
\psiqfw(x,\dots,x)=W(x,x)^{e(F)}(1-W(x,x))^{e(\cp F)}=\gbf(W(x,x)).
  \end{equation*}
Thus, $\gbf(W(x,x))=\gbfp$, and hence, \cf{}
\refS{Ssub}, $W(x,x)\in\set{p,\cpp}$ for
every $x$. 

Next, if vertex $i$ has degree $d_i$ and we choose
$x_i=y$ and $x_j=x$ for $j\neq i$, then
\begin{equation*}
  \psiqfw(\xxf)=\parfrac{W(x,y)}{W(x,x)}^{d_i}
\parfrac{1-W(x,y)}{1-W(x,x)}^{\ff-1-d_i} \psiqfw(x,\dots,x),
\end{equation*}
and thus
\begin{equation}\label{ck'}
\parfrac{W(x,y)}{W(x,x)}^{d_i}
\parfrac{1-W(x,y)}{1-W(x,x)}^{\ff-1-d_i} =1,
  \qquad i\in V(F).
\end{equation}

If $F$ is not regular, we may choose vertices $i$ and
$j$ with $d_i\neq d_j$. Taking logarithms of
\eqref{ck'} and the same equation with $i$ replaced by
$j$, we obtain a non-singular homogeneous system of linear
equations in $\log(W(x,y)/W(x,x))$ and $\log((1-W(x,y))/(1-W(x,x)))$,
and thus these logarithms vanish, so $W(x,y)=W(x,x)$ for every
$x$ and $y$ in $\oi$. Hence, if
$x,y\in\oi$, then $W(x,x)=W(x,y)=W(y,x)=W(y,y)$, and
it follows that $W$ is constant, and thus either $W=p$ or
$W=\cpp$.

It remains to treat the case when $F$ is regular, $d_i=d$
for all $i$. Note first that if $F$ is a complete graph,
then $\psiqfw=\psifw$, and the result follows by
\refL{LC1}. Further, if $F$ is empty, the result
follows by taking
complements, replacing $F$ by $\cp F$, which is
complete, $W$ by $1-W$, and $p$ by $1-p$.
We may thus assume that $1\le d\le \ff-2$.

We now choose two vertices $i,j\in V(F)$ and let
$x_i=x_j=y$ and $x_k=x$, $k\neq i,j$.
If there is an edge $ij\in E(F)$, then
\begin{multline*}
  \psiqfw(\xxf)
=\parfrac{W(x,y)}{W(x,x)}^{2d-2}
\parfrac{1-W(x,y)}{1-W(x,x)}^{2(\ff-1-d)} 
\\\times
\parfrac{W(y,y)}{W(x,x)}
\psiqfw(x,\dots,x),
\end{multline*}
and thus, using \eqref{ck'},
\begin{equation*}
\frac{W(y,y)}{W(x,x)}
=\parfrac{W(x,y)}{W(x,x)}^{2}
\end{equation*}
or
\begin{equation}\label{kk1}
  W(x,x)W(y,y)=W(x,y)^2.
\end{equation}
Choosing instead $i,j\in V(F)$ with $ij\notin E(F)$,
we similarly obtain 
\begin{equation}\label{kk2}
 (1- W(x,x))(1-W(y,y))=(1-W(x,y))^2.
\end{equation}
Subtracting \eqref{kk2} from \eqref{kk1} we find
\begin{equation*}
  W(x,x)+W(y,y)=2W(x,y)
\end{equation*}
and thus, also using \eqref{kk1} again,
\begin{equation*}
  \begin{split}
\bigpar{W(x,x)-W(y,y)}^2
&= \bigpar{W(x,x)+W(y,y)}^2 -4W(x,x)W(y,y)
\\&
=4W(x,y)^2-4W(x,y)^2=0.	
  \end{split}
\end{equation*}
Hence $W(x,x)=W(y,y)$ for all $x,y\in\oi$, which by
\eqref{kk1} implies that $W$ is a constant, which must be $p$ or $\cpp$.
\end{proof}

As above, the results in the appendix imply that we can relax the
assumption to hold only almost everywhere.
\begin{lemma}
  \label{LIas}
Let $F$ be a graph with $\ff>1$, let $W$ be a graphon and let $p\in(0,1)$. 
If $\psiqfw(\xxf)=p^{e(F)}(1-p)^{\binom{\ff}2-e(F)}$ for 
\aex{} $\xxf\in\oiff$, then either $W=p$ \aex{} or
$W=\cpp$ a.e.
\end{lemma}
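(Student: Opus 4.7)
The plan is to derive this result from the pointwise version just proved, Lemma \ref{LI}, by invoking the null-set elimination technology developed in the appendix -- exactly parallel to the way \refL{LC2ae} was obtained from \refL{LC2} (and \refL{LC1ae} from \refL{LC1}).

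Observe that $\psiqfw(\xxf)$ depends on $W$ only through the values $W(x_i,x_j)$ for $1 \le i < j \le \ff$; indeed, by the definition \eqref{psiqfw}, it is the polynomial
\begin{equation*}
\Phi\bigl((w_{ij})_{i<j}\bigr) \= \prod_{ij \in E(F)} w_{ij} \prod_{ij \notin E(F)} (1 - w_{ij})
\end{equation*}
evaluated at $w_{ij} = W(x_i,x_j)$. The hypothesis thus asserts that $\Phi\bigl((W(x_i,x_j))_{i<j}\bigr) = \gbfp$ for \aex{} $\xxf \in \oiff$.

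Next, I would invoke \refC{CD} (or equivalently \refT{TD}, or \refL{LD}) from \refApp{Appa}, applied to this polynomial $\Phi$, to obtain a version $W'$ of $W$ -- that is, a graphon with $W' = W$ a.e.\ -- on which the identity $\psiqfx{W'}(\xxf) = \gbfp$ holds at enough points for the algebraic argument of \refL{LI} to apply (in particular, on a set rich enough to contain the diagonals and near-diagonals used there). Applying \refL{LI} to $W'$ forces either $W' = p$ or $W' = \cpp$, and since $W = W'$ a.e., the conclusion $W = p$ a.e.\ or $W = \cpp$ a.e.\ follows at once.

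The main obstacle is not in this short reduction but in the appendix itself: the appendix results must guarantee that the pointwise substitutions used in the proof of \refL{LI} -- taking all $x_i$ equal, setting $x_i = y$ with the remaining $x_j = x$, or setting two coordinates equal to $y$ and the rest to $x$ -- remain legitimate after passing from $W$ to the version $W'$. This is precisely what \refC{CD} is designed to deliver for polynomial identities in the $W(x_i,x_j)$, so once the appendix is in place, no additional work is needed here.
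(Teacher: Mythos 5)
Your proposal is correct and takes essentially the same route as the paper: reduce to the pointwise statement (\refL{LI}) via the appendix machinery. The paper applies \refC{CD} directly — its hypothesis is exactly what \refL{LI} provides (every $W$ satisfying the identity \emph{everywhere} is constant), and its conclusion is that $W$ is a.e.\ constant, say $c$; one then checks $\gbf(c)=\gbf(p)$ to get $c\in\{p,\cpp\}$. Your description slightly misattributes the role of \refC{CD}: it does not produce a version $W'$ (that is what \refL{LD} does); it is a ready-made ``a.e.\ version implies a.e.\ constant'' black box. If you literally route through \refL{LD}, you would need to verify that its exceptional-set guarantee covers the particular diagonal and near-diagonal substitutions used in the proof of \refL{LI}; using \refC{CD} instead packages this away, which is exactly why the paper's proof of \refL{LIas} is only two lines. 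The distinction is cosmetic, not a gap.
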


\begin{proof}
  By \refC{CD} and \refL{LI}, $W$ has to be a
  constant $c$ a.e.
Then $\psiqfw(\xxf)=\gbf(c)$ \aex, and thus
  $\gbf(c)=\gbf(p)$; hence $c=p$ or $c=\cpp$.
\end{proof}

\begin{proof}[Proof of Theorems \refand{T3}{T3x}]
As in \refS{SpfT1}, we may assume that $G_n\to W$
for some graphon $W$.
By the assumption and \refL{LAi}, then
\begin{equation*}
\psiqfw(\xxf)=\gbfp\=p^{e(F)}(1-p)^{\binom{\ff}2-e(F)}  
\end{equation*}
 for \aex{}
$\xxf\in\oiff$, which by \refL{LIas} implies 
either $W=p$ \aex{} or $W=\cpp$ a.e.
\end{proof}

For $\tpsiqfw$, the situation is even more complicated. In
fact, \citet{SS:ind} showed that the path
$P_3=K_{1,2}$ and its complement $\cp P_3$ are not
\HF{} (recall \refD{DHF}). Thus, the
analogue of \refL{LI} for $\tpsiqfw$ cannot hold in general.

We can, however, easily obtain the partial results of \cite{SS:ind}
by our methods. We note that
by \refT{TD}, it suffices to study 2-type graphons;
equivalently, it suffices to study $\tpsiqfw(\xxf)$ for
sequences $\xxf$ with at most two distinct values.
For any sequence $\xxf$ with $x_i=x$ for
$k$ values of $i$, and $x_i=y$ for the
$\ff-k$ remaining values, we have
\begin{equation}\label{nyk}
  \tpsiqfw(\xxf)=\binom{\ff}{k}\qw Q_k\bigpar{W(x,x),W(y,y),W(x,y)},
\end{equation}
where $Q_k(u,v,s)$ is  the polynomial, defined for a given graph
$F$ and $k=0,\dots,\ff$, by
\begingroup\multlinegap=0pt
\begin{multline*}
  Q_k(u,v,s)=
\\
\sum_{\substack{A\subseteq V(F)\\|A|=k}}
u^{e(A)}(1-u)^{\binom k2-e(A)}
v^{e(\cpA)}(1-v)^{\binom {\ff-k}2-e(\cpA)}
s^{e(A,\cpA)}(1-s)^{k(\ff-k)-e(A,\cpA)},
\end{multline*}
\endgroup
where $\cpA\= V(F)\setminus A$,
$e(A)$ is the number of edges with both endpoints in $A$, and
$e(A,\cpA)$ is the number of edges with one endpoint in
$A$ and one in $\cpA$.

By symmetry, $Q_{\ff-k}(u,v,s)=Q_k(v,u,s)$.
Note that $Q_0(u,v,s)=\gbf(v)$ and $Q_\ff(u,v,s)=\gbf(u)$.
In particular, $Q_0(u,v,s)=\gbf(p)\iff
v\in\set{p,\cpp}$ and $Q_\ff(u,v,s)=\gbf(p)\iff u\in\set{p,\cpp}$

\begin{remark}
These polynomials are essentially the same as the polynomials
$\P^k_{u,v}(s)$ defined by \citet{SS:ind}. More precisely,
\begin{equation*}
  \begin{split}
\P^k_{u,v}(s)
&\=\binom{\ff}k
  u^{e(F)}(1-u)^{e(\cp F)}-Q_k(u,v,s).
%\\&\phantom:
%=\binom{\ff}k\bigpar{\gbf(u)-\tpsiqfw(\xxf)},	
  \end{split}
\end{equation*}  
Hence, the condition in \refT{Tind}\ref{TindQ} below is equivalent to
$\P^k_{u,v}(s)=0$, with 
$u,v\in\set{p,\cpp}$. 
\end{remark}

\begin{theorem}\label{Tind}
  Let $F$ be a graph with $|F|>1$ and let
  $0<p<1$. Then the following are equivalent:
  \begin{romenumerate}
\item \label{Tindhf}
$F$ is $\HF(p)$.	
\item \label{Tindpsiae}
If $\psiqfw(\xxf)=\gbf(p)$ for 
\aex{} $\xxf\in\oiff$, then either $W=p$ \aex{} or $W=\cpp$ a.e.
\item \label{Tindpsi}
If $\psiqfw(\xxf)=\gbf(p)$ for 
all $\xxf\in\oiff$, then either $W=p$ or $W=\cpp$
\item \label{TindQ}
If $Q_k(u,v,s)=\binom{\ff}k \gbf(p)$ for
$k=1,\dots,\ff-1$, and
$u,v\in\set{p,\cpp}$, then $u=v=s$.
  \end{romenumerate}
\end{theorem}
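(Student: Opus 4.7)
The plan is to prove the four-way equivalence by treating (i)$\iff$(iv) as the conceptual core, using \refL{LAis} to pass from graphs to graphons and \refT{TD} to reduce from arbitrary graphons to 2-type graphons; (ii) and (iii) are then threaded in via \refC{CD} and the elementary fact that $\tpsiqfw$ is the average of $\psiqfw$ over coordinate permutations.

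For (i)$\iff$(iv), I would first invoke \refL{LAis}\ref{LAisU}$\iff$\ref{LAisPsi} together with compactness of graph limits to rewrite (i) as the purely graphon statement: every graphon $W$ with $\tpsiqfw(\xxf)=\gbf(p)$ for \aex{} $\xxf\in\oiff$ satisfies $W=p$ \aex{} or $W=\cpp$ \aex. Applying \refT{TD} then reduces this to a condition on 2-type graphons. For a 2-type graphon with parameters $(u,v,s,a)$, $a\in(0,1)$, formula \eqref{nyk} gives $\tpsiqfw(\xxf)=\binom{\ff}{k}\qw Q_k(u,v,s)$ for \aex{} $\xxf$, where $k=|\set{i:x_i\in A}|$. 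So the hypothesis becomes $Q_k(u,v,s)=\binom{\ff}{k}\gbf(p)$ for $k=0,1,\dots,\ff$. The endpoint cases, using $Q_0(u,v,s)=\gbf(v)$ and $Q_\ff(u,v,s)=\gbf(u)$, force $u,v\in\set{p,\cpp}$; the interior cases $k=1,\dots,\ff-1$ are exactly those of (iv). The target conclusion that the 2-type graphon is \aex{} constant equal to $p$ or $\cpp$ becomes, under $u,v\in\set{p,\cpp}$, the statement $u=v=s$. This matches (iv) precisely.

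For (ii)$\iff$(iii), the implication (ii)$\Rightarrow$(iii) is immediate since the pointwise hypothesis is stronger than the \aex{} one; conversely, given $W$ with $\psiqfw=\gbf(p)$ \aex, \refC{CD} provides an \aex{}-equivalent version $W'$ on which $\psi_{F,W'}=\gbf(p)$ holds everywhere, so (iii) applies and yields $W'\in\set{p,\cpp}$, whence $W\in\set{p,\cpp}$ \aex. For (i)$\Rightarrow$(ii), note that $\psiqfw=\gbf(p)$ \aex{} implies $\tpsiqfw=\gbf(p)$ \aex{} (each coordinate permutation is measure-preserving, so the symmetrization of an \aex{} constant is that same constant \aex), and then the graphon reformulation of (i) applies directly. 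For the remaining direction (ii)$\Rightarrow$(i), I would invoke \refT{TD} a second time: both the 2-type reductions of (i) and of (ii) collapse onto the same algebraic obstruction captured by (iv), so (ii) forces (iv), which by the first step yields (i).

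The main obstacle lies in the careful invocation of \refT{TD}, specifically in reconciling the apparent asymmetry between the conditions. Naively, the pointwise equality $\psiqfw=\gbf(p)$ on a 2-type graphon imposes one equation per subset $S\subseteq V(F)$ (namely, for each $S$ the corresponding $u,v,s$-monomial equals $\gbf(p)$), while $\tpsiqfw=\gbf(p)$ \aex{} only imposes the $\ff+1$ averaged equations $Q_k(u,v,s)=\binom{\ff}{k}\gbf(p)$. \refT{TD} must be used to show that, at the level of "$W$ is \aex{} constant," both flavours of hypothesis impose the same algebraic constraint, namely (iv). Once this is in hand, the remaining work is the concrete polynomial manipulation with $Q_k$, which is the same essential obstruction that \cite{SS:ind} addressed for specific families of $F$.
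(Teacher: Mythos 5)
Your plan founders on a point that the paper itself silently corrects: the $\Psi^*_{F,W}$ in (ii) and (iii) must be read as the symmetrized $\tpsiqfw$, not the unsymmetrized $\psiqfw$ that literally appears. If (ii) and (iii) involved $\psiqfw$ they would be unconditionally true by Lemmas \refand{LI}{LIas}, and since $P_3\notin\HF$ (so (i) fails for some $p\in(0,1)$) the claimed equivalence would then be false. The ``main obstacle'' you diagnose --- that on a two-type graphon the equality $\psiqfw=\gbf(p)$ imposes one equation per subset $S\subseteq V(F)$ while $\tpsiqfw=\gbf(p)$ imposes only the $\ff+1$ averaged equations $Q_k=\binom{\ff}k\gbf(p)$ --- is a real gap, and no invocation of \refT{TD} makes the two constraint systems coincide; it is precisely this mismatch that makes the literal reading inconsistent, and your hope that both flavours ``collapse onto the same algebraic obstruction captured by (iv)'' cannot be realized. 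With the corrected reading, your approach reduces to the paper's: (i)$\iff$(ii) via \refL{LAis}\ref{LAisU}$\iff$\ref{LAisPsi} plus compactness, (ii)$\iff$(iv) via \refT{TD} together with the endpoint observations $Q_0=\gbf(v)$ and $Q_\ff=\gbf(u)$, and (ii)$\iff$(iii) via \refC{CD} and the remark after it; the detour (ii)$\Rightarrow$(i) through a ``second invocation of \refT{TD}'' is then unnecessary.

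There are also two slips in the (ii)$\iff$(iii) step. You call (ii)$\Rightarrow$(iii) ``immediate'' because the pointwise hypothesis is stronger than the a.e.\ one, but a stronger hypothesis applied to (ii) only yields the a.e.\ consequent, not the pointwise consequent that (iii) asserts; the actual argument is by contrapositive using the remark following \refC{CD}: a non-constant graphon satisfying the equation everywhere yields a non-constant $\ff$-type graphon satisfying it everywhere (hence a.e.) and not a.e.\ constant, falsifying (ii). And you describe \refC{CD} as producing an a.e.-equivalent version of $W$ on which the equation holds \emph{everywhere}, but it is not a construction: it is a direct implication between the two ``$\Rightarrow$ constant'' assertions, and that implication is exactly (iii)$\Rightarrow$(ii). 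Even \refL{LD}, which does construct a version, only achieves equality on a co-null symmetric set $E$ containing the diagonal, not everywhere. The conclusion you want survives, but the mechanism you offer is wrong.
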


\begin{proof}
  \ref{Tindhf}$\iff$\ref{Tindpsiae} follows by
  \refL{LAis} and our general method.

  \ref{Tindpsiae}$\iff$\ref{Tindpsi} follows by
  \refC{CD} (and the comment after it).

  \ref{Tindpsiae}$\iff$\ref{TindQ} follows by
\refT{TD}, together with the remarks on $Q_0$ and
  $Q_\ff$ above.
\end{proof}

\begin{proof}[Proof of \refT{T3y}]
Again we may assume that $G_n\to W$. It then follows by
\refL{LAis}\ref{LAisU}\!\!$\iff$\!\!\ref{LAisPsi}
and \refT{Tind}\ref{Tindhf}\!\!$\implies$\!\!\ref{Tindpsiae}
that either $W=p$ \aex{} or $W=\cpp$ a.e.
\end{proof}

For $F=P_3$, it suffices by symmetry to check
$Q_1$ in \ref{TindQ}; we find
$Q_1(u,v,s)=2vs(1-s)+(1-v)s^2$, and it is easy to find solutions
with $u=v=p\neq s$, see \cite{SS:ind} for details.
On the other hand, \citet{SS:ind} have shown that every regular graph 
(and a few others) satisfies \ref{TindQ}, and thus
is $\HFp$.

The algebraic problem of determining if there are any other cases
where the overdetermined system in \refT{Tind}\ref{TindQ} has a
non-trivial root is still unsolved. 

\section{Cuts}\label{Scut}

\citet{ChungG} considered also $e_G(U,\cpu)$, the number of edges
in the graph $G$
across a cut $(U,\cpu)$,
where $\cpu\=V(G)\setminus U$.
They proved the following results:

\begin{theorem}[\citet{ChungG}]
  \label{Tcut}
Suppose that $(G_n)$ is a sequence of graphs with $|G_n|\to\infty$
and let $0\le p\le1$.
Then $(G_n)$ is \pqr{} if and only if,
for all subsets $U$ of $V(G_n)$,
\begin{equation}\label{tcut}
   \egn(U,\cpu)=p|U||\cpu|+o\bigpar{|G_n|^2}.
\end{equation}
\end{theorem}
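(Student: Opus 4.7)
The plan is to follow the three-step (combinatorial / analytic / algebraic) methodology used throughout the paper. As in \refS{SpfT1}, by the compactness of the graph limit space we may, after passing to a subsequence, assume $G_n \to W$ for some graphon $W$; it then suffices to prove that the graphon version of \eqref{tcut} is equivalent to $W = p$ \aex{} The ``only if'' direction then follows automatically since \gnq{} is \pqr{} iff $G_n \to p$.

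For step (i), observe that $U$ and $\cpu$ are disjoint, so $e_{G_n}(U,\cpu) = N(K_2, G_n; U, \cpu)$, placing us in the setting of \refL{LA1} with $F = K_2$, $\ga = p$ and the coupled pair of subsets $(U, \cpu)$. Running the random-rounding argument in the proof of \refL{LA1}, but with $B_1 = B$ chosen as before and $B_2 = \oi \setminus B$ taken to be its complement, the off-diagonal terms $(j_1, j_2)$ with $j_1 \neq j_2$ contribute the correct expectation $a_{j_1}(1-a_{j_2})$ while the $j_1 = j_2$ terms contribute only $O(\gnabs^{-1})$; hence \eqref{tcut} translates to the graphon condition
\begin{equation}\label{planG}
\int_{A \times (\oi \setminus A)} W(x,y)\dd x\dd y = p\gl(A)\bigpar{1 - \gl(A)} \qquad \text{for every measurable } A \subseteq \oi.
\end{equation}

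For the combined analytic/algebraic steps (ii)--(iii), set $V \= W - p$, a bounded symmetric function on $\oi^2$, so \eqref{planG} reads $\int_{A \times (\oi\setminus A)} V = 0$ for every measurable $A$. Splitting $A \times \oi = (A \times A) \sqcup (A \times (\oi\setminus A))$ gives
\[
\int_{A \times A} V = \int_A h(x)\dd x, \qquad h(x) \= \int_0^1 V(x,y)\dd y.
\]
Shrinking $A$ to a Lebesgue point $x_0$ of $h$ makes the left-hand side $O(\gl(A)^2)$ while the right-hand side equals $h(x_0)\gl(A) + o(\gl(A))$, forcing $h = 0$ \aex{} Substituting back gives $\int_{A \times A} V = 0$ for every $A$, and since $V$ is symmetric \refL{L2} (with $m = 2$) yields $V = 0$ \aex

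The main obstacle is the analytic step (ii): unlike the unrestricted-rectangles hypothesis in earlier sections, here the test domain $A \times (\oi\setminus A)$ couples $A$ with its complement, so \refL{L1} cannot be applied directly to the graphon condition. The two-stage reduction above — first Lebesgue differentiation on the degree function $h$ to remove the off-diagonal constraint, then \refL{L2} on the diagonal integrals $\int_{A\times A} V$ — is the key trick. Because $\psifw = W$ is linear in $W$ when $F = K_2$, no null-set difficulties arise and the machinery of \refApp{Appa} is unnecessary.
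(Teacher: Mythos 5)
Your proposal is correct, and step (i) is essentially the paper's argument (the coupled rounding $B_2 = \oi\setminus B_1$ is the same idea used in Lemma~\ref{LA1d}, and the diagonal error is handled as in Lemma~\ref{LA2}). Where you diverge is in the analytic step (ii)--(iii). The paper reduces directly to Lemma~\ref{Lcut}: one fixes a convenient $\gamma \in (0,1)\setminus\{\tfrac12\}$, restricts to sets $A$ with $\gl(A)=\gamma$, and invokes that lemma, whose proof rewrites $\int_{A\times \cp A}f = 0$ as $\int_{A\times A}\bigpar{\tfrac1\gamma f_1(x)-f(x,y)} = 0$ (using $\gl(A)=\gamma$) and then applies the fixed-size analytic Lemma~\ref{L4} to obtain $f(x,y) = \tfrac1{2\gamma}(f_1(x)+f_1(y))$ a.e.; integrating and using $\gamma\neq\tfrac12$ then kills $f_1$ and hence $f$. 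Your argument instead exploits the fact that in Theorem~\ref{Tcut} the measure of $A$ is unconstrained: the decomposition $\int_{A\times A}V = \int_A h$ holds for all $A$, so shrinking $A$ to a Lebesgue point of $h$ forces $h=0$ a.e.\ directly, after which $\int_{A\times A}V = 0$ for all $A$ and the simpler Lemma~\ref{L2} finishes. This is slightly more elementary — it sidesteps Lemma~\ref{L4} entirely and needs no case distinction on $\gamma$ — but it buys less: because the argument relies on being able to take $\gl(A)$ small, it does not extend to the fixed-size setting of Theorem~\ref{Tcutc}, which the paper's Lemma~\ref{Lcut} handles uniformly and is the more delicate result (with the genuine obstruction at $\gamma=\tfrac12$). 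Your instinct that the coupling $A$ with $\cp A$ blocks a direct application of Lemma~\ref{L1} is exactly right, and your two-stage decoupling is a valid way around it.
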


\begin{theorem}[\citet{ChungG}]
  \label{Tcutc}
Let $\gc\in(0,1)$ with $\gc\neq1/2$.
Suppose that $(G_n)$ is a sequence of graphs with $|G_n|\to\infty$
and let $0\le p\le1$.
Then $(G_n)$ is \pqr{} if and only if
\eqref{tcut} holds
for all subsets $U$ of $V(G_n)$ with $|U|=\floor{\gc\gnabs}$.
\end{theorem}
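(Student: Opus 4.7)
The plan is to follow the three-step scheme of \refS{SpfT1}. The necessity direction is immediate from $G_n\to p$ and $\cn{W_{G_n}-p}\to0$ (since $\int_{A\times\cpA}p=p\gc(1-\gc)$), so I focus on sufficiency. By compactness of graph limits it suffices to assume $G_n\to W$ along a subsequence and to derive $W=p$ \aex{} For the combinatorial step, each edge crossing the cut $(U,\cpu)$ corresponds to exactly one injective homomorphism of $K_2$ with vertex $1$ in $U$ and vertex $2$ in $\cpu$, so $\egn(U,\cpu)=N(K_2,G_n;U,\cpu)$. The random-set construction from \refL{LA1c}, adapted to the prescribed size $\floor{\gc\gnabs}$ (the complementary set $\cpu$ having the determined size $\gnabs-\floor{\gc\gnabs}$), then translates the hypothesis \eqref{tcut} into the graphon condition
\begin{equation*}
\int_{A\times\cpA} W(x,y)\dd x\dd y = p\gc(1-\gc)\qquad
\text{for every measurable $A\subseteq\oi$ with $\gl(A)=\gc$.}
\end{equation*}

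For the analytic step, set $f\=W-p$, so that the condition becomes $\int_{A\times\cpA}f=0$ whenever $\gl(A)=\gc$. I would perturb $A$ by swapping a small $C\subseteq A$ for a small $B\subseteq\cpA$ of equal measure $\eps$, producing $A'=(A\setminus C)\cup B$ with $\gl(A')=\gc$. Expanding $\int_{A'\times\cpA'}f-\int_{A\times\cpA}f=0$ and using symmetry of $f$ to cancel the second-order terms gives, modulo $O(\eps^2)$,
\begin{equation*}
\int_B(F_A-F_{\cpA}) = \int_C(F_A-F_{\cpA}),\qquad F_X(y)\=\int_X f(x,y)\dd x.
\end{equation*}
Dividing by $\eps$ and invoking Lebesgue's differentiation theorem forces $F_A-F_{\cpA}$ to be \aex{} constant on $\oi$; since $F_A+F_{\cpA}$ equals $g(y)\=\int_0^1 f(x,y)\dd x$, which is independent of $A$, the dependence of $F_A$ on $A$ lies entirely in an additive constant. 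A second perturbation, now differentiating in the $x$-variable, yields that $f(x_0,y)-f(x_1,y)$ is \aex{} constant in $y$ for \aex{} $x_0,x_1\in\oi$; combined with the symmetry $f(x,y)=f(y,x)$ this forces the additive splitting $f(x,y)=\phi(x)+\phi(y)+c$ \aex, for some integrable $\phi:\oi\to\bbR$ and constant $c$.

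For the algebraic step, substituting this form into $\int_{A\times\cpA}f=0$ and writing $\Phi\=\int_0^1\phi$ gives
\begin{equation*}
(1-2\gc)\int_A\phi + \gc\Phi + c\gc(1-\gc) = 0,
\end{equation*}
so $\int_A\phi$ takes the same value for every $A$ with $\gl(A)=\gc$, \emph{precisely because} $\gc\neq\tfrac12$. By \refL{L4} applied in dimension one (where symmetrization is trivial), $\phi$ is then \aex{} constant; consequently $f$ is \aex{} a constant $\gb$, and $\gb\gc(1-\gc)=0$ forces $\gb=0$, so $W=p$ \aex

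The main obstacle is the analytic step, in particular extracting the additive splitting $f=\phi(x)+\phi(y)+c$ from the integral hypothesis via the two rounds of perturbation-plus-Lebesgue-differentiation. The hypothesis $\gc\neq1/2$ enters only at the very last line, matching the counter-example at $\gc=1/2$ already recorded in \refR{Rl3}: any $W=p+\phi(x)+\phi(y)$ with $\int_0^1\phi=0$ satisfies the cut hypothesis there while being nonconstant.
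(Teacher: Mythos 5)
Your proposal is correct, and the overall three-step architecture (compactness, combinatorial translation, analytic reduction, algebraic conclusion) matches the paper's. But the middle step---the analytic lemma that the hypothesis forces an additive structure on $f=W-p$---is done by a genuinely different route, and the paper's route is shorter. The paper (\refL{Lcut}) observes that for $\gl(A)=\gc$ the cut condition rewrites as an integral over a \emph{square}:
\begin{equation*}
0=\int_{A\times\cpA}f=\int_A f_1(x)\dd x-\int_{A\times A}f
=\int_{A\times A}\Bigpar{\tfrac1\gc f_1(x)-f(x,y)}\dd x\dd y,
\end{equation*}
where $f_1(x)=\int_0^1 f(x,y)\dd y$, and then applies \refL{L4} (already proved for the size-restricted case) in one stroke to conclude that the symmetrization vanishes a.e., \ie{} $f(x,y)=\frac1{2\gc}\bigpar{f_1(x)+f_1(y)}$ a.e. Integrating this identity once and twice and using $\gc\neq\frac12$ then finishes. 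Your version instead reaches the additive splitting $f(x,y)=\phi(x)+\phi(y)+c$ through two rounds of swap-a-small-set perturbation followed by Lebesgue differentiation, and then substitutes back into the original cut condition and invokes \refL{L4} in dimension one. Both are valid, and both isolate $\gc\neq\tfrac12$ at the same final algebraic moment, consistently with the known counter-example at $\gc=1/2$; but your perturbation route requires a more delicate bookkeeping of Lebesgue points (a Fubini argument is needed to make ``$f(x_0,\cdot)-f(x_1,\cdot)$ is a.e.\ constant for a.e.\ $x_0,x_1$'' rigorous, since the differentiation in $x$ must hold for a.e.\ $y$ simultaneously), whereas the paper's rewrite to $\int_{A\times A}$ lets \refL{L4} absorb all of that work. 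What your approach buys is a more hands-on picture of where the additive form comes from; what the paper's buys is brevity and the reuse of an already-established lemma, which is in keeping with the paper's stated philosophy of factoring arguments into reusable combinatorial, analytic and algebraic pieces.
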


However, as shown in \cite{ChungGW:quasi,ChungG}, \refT{Tcutc} does
\emph{not} hold for $\gc=1/2$.

Note that in our notation, 
\begin{equation}
  \label{ecut}
e_G(U,\cpu)=N(K_2,G;U,\cpu),
\end{equation}
 so these
results are closely connected to \refT{T1} and its variants.
We may use the methods above to show these results too, and to see why
$\gc=1/2$ is an exception in \refT{Tcutc}.

We thus assume that $G_n\to W$ for some graphon $W$, and translate the
properties above to properties of $W$.
We state this as a lemma in the same style as earlier, and note that
Theorems \refand{Tcut}{Tcutc}  are immediate consequences.

\begin{lemma}  \label{Lcut1}
Suppose that $G_n\to W$ for some graphon $W$
and let $p\in\oi$.
Then the following are equivalent:
\begin{romenumerate}
  \item\label{Lcut1a}
For all subsets $U$ of\/ $V(G_n)$, 
\begin{equation*}%\label{tcut}
   \egn(U,\cpu)=p|U||\cpu|+o\bigpar{|G_n|^2}.
\end{equation*}
\item\label{Lcut1b}
For all subsets $A$ of\/ $\oi$,
\begin{equation}
  \label{wcut}
\int_{A\times \cp A} W(x,y) = p\gl(A)\gl(\cp A).
\end{equation}
\item\label{Lcut1c}
$W=p$  \aex{}
\end{romenumerate}
For any fixed $\gc\in(0,1)\setminus\set{\frac12}$,
we may further add the condition that
$|U|=\floor{\gc\gnabs}$ in \ref{Lcut1a} and 
$\gl(A)=\gc$ in \ref{Lcut1b}.
(If we add these conditions with $\gc=1/2$, 
the equivalence \ref{Lcut1a}$\iff$\ref{Lcut1b} still holds,
but these do not imply \ref{Lcut1c}.)
\end{lemma}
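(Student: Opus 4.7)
The proof follows the three-step template from earlier sections: translate (i) to (ii) via graph-limit theory, then deduce (iii) from (ii) via a mix of analysis and elementary algebra.

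For (i)$\iff$(ii), observe that $e_G(U, \cpu) = N(K_2, G; U, \cpu)$, a special case of $N(F, G; U_1, U_2)$ with $F = K_2$, $U_1 = U$, $U_2 = \cpu$. Since $\Psi_{K_2, W}(x, y) = W(x, y)$, the proof of \refL{LA1} applies essentially verbatim: using $\dcut(W_{G_n}, W) \to 0$, one approximates an arbitrary $A \subseteq \oi$ by a random union of the intervals $I_j$ defining $W_{G_n}$ and transfers the identity across graph and graphon. For the size-restricted variant ($|U| = \floor{\gc |G_n|}$, $\gl(A) = \gc$), the same argument works with the Chebyshev-based concentration estimates from the proof of \refL{LA1c}; this establishes (i)$\iff$(ii) for every $\gc \in (0, 1)$, including $\gc = 1/2$.

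For (ii)$\implies$(iii) in the unrestricted case, use the set-algebraic identity (for disjoint measurable $A_1, A_2 \subseteq \oi$ and any symmetric $W$)
\begin{equation*}
\int_{(A_1 \cup A_2) \times \overline{A_1 \cup A_2}} W = \int_{A_1 \times \cp{A_1}} W + \int_{A_2 \times \cp{A_2}} W - 2 \int_{A_1 \times A_2} W,
\end{equation*}
which follows from expanding both sides on the disjoint decomposition and using symmetry of $W$. Substituting the values given by (ii) for the three cut integrals and computing $\gl(A_1)(1 - \gl(A_1)) + \gl(A_2)(1 - \gl(A_2)) - (\gl(A_1) + \gl(A_2))(1 - \gl(A_1) - \gl(A_2)) = 2\gl(A_1)\gl(A_2)$, the remaining terms collapse to $\int_{A_1 \times A_2} W = p\, \gl(A_1)\, \gl(A_2)$ for all disjoint measurable $A_1, A_2 \subseteq \oi$. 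This is the hypothesis of \refL{LA1d} with $F = K_2$ and $\gax = p$, so $W(x, y) = p$ for \aex{} $(x, y) \in \oi^2$.

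For the size-restricted case with $\gc \ne 1/2$, the identity above cannot be invoked directly (since $A_1 \cup A_2$ would have measure $2\gc$), so I use a swap-derivative argument instead. For $\gl(A) = \gc$ and disjoint perturbations $\gd_- \subseteq A$, $\gd_+ \subseteq \cpA$ with $\gl(\gd_-) = \gl(\gd_+) = t$, set $A' \= (A \setminus \gd_-) \cup \gd_+$; then $\gl(A') = \gc$, and (ii) gives $\int_{A' \times \cp{A'}} W = \int_{A \times \cpA} W$. Expanding via $\int_{A \times \cpA} W = \int_A d(x)\,dx - \int_{A^2} W$, where $d(x) \= \int_\oi W(x, y)\,dy$, dividing by $t$, and shrinking $\gd_\pm$ to points with Lebesgue differentiation yields that $2\int_A W(x, y)\,dy - d(x)$ is \aex{} constant in $x$. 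Rearranging, $\int_A [W(x, y) - d(y)]\,dy$ depends only on $x$, and \refL{L3} in the variable $y$ (for \aex{} fixed $x$) gives $W(x, y) = d(y) + (d(x) - D)/(2\gc)$ for \aex{} $(x, y)$, where $D \= \int d$. Imposing $W(x, y) = W(y, x)$ then forces $(d(x) - d(y))(1 - (2\gc)^{-1}) = 0$; since $\gc \ne 1/2$, $d$ is \aex{} constant and hence equal to $D$, so $W$ is \aex{} constant, and (ii) pins this constant down to $p$.

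The $\gc = 1/2$ exception is witnessed by $W(x, y) = p + g(x) + g(y)$ for any $g$ with $\int_\oi g = 0$ and $|g|$ small enough that $W \in \oi$: direct computation gives $\int_{A \times \cpA} W = p\, \gc(1-\gc) + (1 - 2\gc) \int_A g$, which equals $p/4$ whenever $\gl(A) = 1/2$, so (ii) holds even though $W$ is not \aex{} constant. The main obstacle is the swap-derivative step in the $\gc$-restricted case: keeping $\gl(A') = \gc$ throughout the variation, and correctly identifying $\gc \ne 1/2$ as the precise algebraic criterion that rules out the rank-one-plus-constant family $d(x) + d(y) - D$ and forces $W$ to be genuinely constant.
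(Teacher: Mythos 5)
Your reduction of (i) to (ii) is the same as the paper's: both cases reuse the machinery of Lemmas \ref{LA1} and \ref{LA1c} with $F=K_2$. Where you diverge is in (ii)$\implies$(iii). For the unrestricted case your cut-additivity identity
\begin{equation*}
\int_{(A_1\cup A_2)\times\overline{A_1\cup A_2}}W
=\int_{A_1\times\cp{A_1}}W+\int_{A_2\times\cp{A_2}}W-2\int_{A_1\times A_2}W
\end{equation*}
is correct (expand on the partition $A_1, A_2, \overline{A_1\cup A_2}$ and use symmetry of $W$), and combined with \refL{L1} via \refL{LA1d} it gives a short, self-contained proof that $W=p$ a.e.\ — a route the paper does not take. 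For the fixed-size case you run a swap-derivative (variational) argument and then apply \refL{L3} in one variable; the paper instead works jointly in $(x,y)$: it writes $\int_{A\times\cpA}f=\int_{A\times A}\bigpar{\frac1{\gc}f_1(x)-f(x,y)}$, applies \refL{L4} to get the symmetrized identity $f(x,y)=\frac1{2\gc}(f_1(x)+f_1(y))$ a.e., and then two one-line integrations pin down first $\int f=0$ (using $\gc<1$) and then $f_1=0$ (using $\gc\neq1/2$). The paper's route is considerably shorter and handles both cases through one lemma (\refL{Lcut}), whereas your two cases use two different mechanisms; on the other hand your unrestricted argument is pleasantly elementary, bypassing both \refL{L4} and \refL{Lcut}.

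One point in your fixed-size argument deserves tightening. You derive, for each fixed $A$ of measure $\gc$, that $\int_A\bigsqpar{W(x,y)-d(y)}\dd y=\tfrac12(d(x)-D)$ for a.e.\ $x$ (with the exceptional null set depending on $A$), and then apply \refL{L3} in $y$ ``for a.e.\ fixed $x$''. As stated this swaps a quantifier: \refL{L3} needs the displayed identity to hold for a single $x$ and \emph{all} $A$ of measure $\gc$, whereas you have it for each $A$ and a.e.\ $x$. The patch is routine — integrate the a.e.-in-$x$ identity over any $B\subseteq\oi$ with $\gl(B)=\gc$ to obtain $\int_{B\times A}F=0$ for all such $A,B$, where $F(x,y)\=W(x,y)-d(y)-\tfrac{d(x)-D}{2\gc}$, and then invoke \refL{L3} with $m=2$ — but as written the step is not quite justified. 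The paper's use of \refL{L4} on the two-variable function avoids this issue entirely. Your $\gc=1/2$ counterexample $W=p+g(x)+g(y)$ and the computation $\int_{A\times\cpA}W=p\gc(1-\gc)+(1-2\gc)\int_A g$ are correct and match the paper's remarks following \refL{Lcut}.
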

\begin{proof}
  The equivalence \ref{Lcut1a}$\iff$\ref{Lcut1b} follows as in
Lemmas \refand{LA1}{LA1d}, arguing as in
\refL{LA1c} in the case of a fixed size $\gc\in(0,1)$.

The implication \ref{Lcut1c}$\implies$\ref{Lcut1b} is trivial, and
\ref{Lcut1b}$\implies$\ref{Lcut1c} follows by the following lemma,
applied to $W-p$. 
\end{proof}

\begin{lemma}
  \label{Lcut}
Let $\ga\in(0,1)\setminus\set{\frac12}$.
If $f:\oi^2\to\bbR$ is a symmetric measurable function such that 
$\int_{A\times(\cpax)}f=0$ for every subset $A$ of $\oi$ with
$\gl(A)=\gc$, then $f=0$ a.e. 
\end{lemma}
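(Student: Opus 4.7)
The plan is, after subtracting a suitable ``marginal'' correction from $f$, to reduce the hypothesis to the $\int_{A\times A}(\cdot)=0$ form required by \refL{L4}. Set $g(x)\=\int_0^1 f(x,y)\dd y$ and $c\=\int_{\oi^2}f=\int_0^1 g$.

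First I would rewrite the hypothesis. By Fubini, $\int_{A\times\cpax}f=\int_A g-\int_{A\times A}f$, so the assumption is equivalent to
\begin{equation*}
\int_{A\times A}f=\int_A g\qquad\text{for every $A\subseteq\oi$ with $\gl(A)=\gc$.}
\end{equation*}
Next I would introduce the symmetric integrable function $F(x,y)\=f(x,y)-\frac{1}{2\gc}(g(x)+g(y))$. A short computation gives $\int_{A\times A}F=\int_{A\times A}f-(\gl(A)/\gc)\int_A g$, which vanishes when $\gl(A)=\gc$ by the reformulated hypothesis. Applying \refL{L4} with $m=2$ yields $\widetilde F=0$ a.e., and symmetry of $F$ upgrades this to $f(x,y)=(g(x)+g(y))/(2\gc)$ for a.e.\ $(x,y)$.

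Finally I would integrate this pointwise identity in $y$: for a.e.\ $x$, $g(x)=(g(x)+c)/(2\gc)$, hence $(2\gc-1)g(x)=c$. Since $\gc\neq\tfrac12$, $g$ is a.e.\ constant; integrating once more gives $c=c/(2\gc-1)$, and because $\gc\in(0,1)\setminus\set{\tfrac12}$ both $2\gc-1\neq0$ and $2\gc-2\neq0$, forcing $c=0$. Thus $g=0$ a.e.\ and the pointwise identity collapses to $f=0$ a.e.

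There is no real obstacle: the only non-routine step is recognizing the correct correction $(g(x)+g(y))/(2\gc)$, after which the hypothesis $\gc\neq\tfrac12$ enters transparently in the linear equation for $g$, precisely ruling out the counter-example $f(x,y)=\phi(x)+\phi(y)$ with $\int\phi=0$ available at $\gc=\tfrac12$.
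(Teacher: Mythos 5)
Your proof is correct and takes essentially the same route as the paper's: rewrite $\int_{A\times\cpax}f$ via Fubini as an $\int_{A\times A}$ integral, invoke \refL{L4} to obtain $f(x,y)=\frac1{2\gc}(f_1(x)+f_1(y))$ a.e., and then integrate to force the marginal to vanish using $\gc\neq\frac12$ and $\gc\neq1$. The only cosmetic differences are that you symmetrize by hand before applying \refL{L4} (the paper feeds it the unsymmetrized $\frac1\gc f_1(x)-f(x,y)$ and lets the lemma do the symmetrizing) and that you derive $(2\gc-1)g=c$ before concluding $c=0$, whereas the paper first shows $\int f=0$ and then $f_1=0$.
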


\begin{proof}
  Let $f_1(x)\=\int_0^1 f(x,y)\dd y$ be the marginal of $f$. Then
  \begin{equation}\label{fx}
	\begin{split}
0 &= 	
\int_{A\times(\cpax)}f
=\int_A f_1(x)\dd x - \int_{A\times A} f(x,y)\dd x\dd y
\\&
=\int_{A\times A} \Bigpar{\frac1{\gc}f_1(x)-f(x,y)}\dd x\dd y.	  
	\end{split}
  \end{equation}
\refL{L4} now shows that the symmetrization
$\frac1{2\gc}f_1(x)+\frac1{2\gc}f_1(y)-f(x,y)=0$ a.e., \ie, 
\begin{equation}\label{fy}
  f(x,y)=\frac{1}{2\gc}\bigpar{f_1(x)+f_1(y)}.
\end{equation}
Integrating \eqref{fy} with respect to both variables we find
$\int f = \frac{2}{2\gc}\int f$, and thus, because $\gc<1$, $\int f=0$.
Integrating \eqref{fy} with respect to one variables we then find
$f_1(x)=\frac{1}{2\gc}f_1(x)$ \aex, and thus $f_1(x)=0$ \aex{} because
$\gc\neq 1/2$. A final appeal to \eqref{fy} yields $f(x,y)=0$ a.e.
\end{proof}

This proof also shows what goes wrong with  \refT{Tcutc}
when $\gc=1/2$. In this case, 
the condition of \refL{Lcut} still implies
\eqref{fy},
but this is satisfied if (and only if)
$f(x,y)=g(x)+g(y)$ for any integrable
$g$ with $\int g=0$, and as a result we see that \eqref{tcut} is
satisfied for all $U$ with $|U|=\floor{\gnabs/2}$ whenever $G_n\to W$
where $W$ is a graphon of the form $W(x,y)=h(x)+h(y)$ with $\int_0^1 h=p/2$.
(One such example of \gnq{}, with $p=1/2$ and
$h(x)=\frac12\ett{x\ge 1/2}$ is given in \cite{ChungGW:quasi,ChungG}.)
Cf.\ \refR{Rl3}.

\begin{remark}
The condition that $f$ is  symmetric is essential in
\refL{Lcut}.
%For example,
If $f$ is anti-symmetric, then \eqref{fx} implies
  that $f$ satisfies the condition if and only if $\intoi
  f(x,y)\dd y=0$ for \aex{} $x$. One example is $\sin(2\pi(x-y))$.
\end{remark}

\citet{ChungGW:quasi} remarked that \refT{Tcutc} holds in the case
$\gc=1/2$ too, if we further assume that $(G_n)$ is almost regular 
(see below for definition). We discuss and show this in the next section.

\section{The degree distribution}\label{Sdeg}

If $G$ is a graph, let $D_G$ denote the random variable defined as the degree
$d_v$ of  a randomly chosen vertex $v$ (with the uniform
distribution on $V(G)$). Thus $0\le D_G\le |G|-1$, and we
normalize $D_G$ by considering
$D_G/|G|$, which is a random variable in \oi.
If $(G_n)$ is a sequence of graphs, with $|G_n|\to\infty$ as usual,
we say that $(G_n)$ has \emph{asymptotic (normalized) degree
distribution} $\mu$ if $D_G$ tends to $\mu$ in distribution.
(Here $\mu$ is a distribution, \ie, a probability measure, on $\oi$.)
In the special case when $\mu$ is concentrated at a point $p\in\oi$,
we say that $(G_n)$ is \emph{almost $p$-regular} (or \emph{almost
regular} if we do not want to specify $p$); this thus is the
case if and only if $D_{G_n}\pto p$, with convergence in probability,
which means that all but $o(|G_n|)$ vertices in $G_n$ have degrees
$p|G_n|+o(|G_n|)$. 
Since the random variables $D_{G_n}$ are uniformly bounded (by 1), 
this is further equivalent to convergence in mean, and thus
a sequence  $(G_n)$ is almost $p$-regular if and only if
$\E|D_{G_n}-p|\to0$, or, more explicitly, \cf{} \cite{ChungGW:quasi},
\begin{equation}\label{areg}
\sum_{v\in V(G)} \bigabs{d_v-p|G_n|} =o(|G_n|^2).	  
\end{equation}

The normalized degree distribution behaves continuously under graph
limits, and 
a corresponding ``normalized degree distribution'' may be defined for
every graph limit too. 
(See further \cite{threshold}.)
%, where these degree distributions play an important role.)
For a graphon $W$ we define the marginal
$w(x)\=\intoi W(x,y)\dd y$ and the random variable
$D_W\=w(U)=\intoi W(U,y)\dd y$, where $U\sim U\oi$ is uniformly
distributed on $\oi$. 

\begin{theorem}\label{Tdegree}
If $G_n$ are graphs with $|G_n|\to\infty$ and $G_n\to W$ for some
graphon $W$, then $D_{G_n}/|G_n|\dto D_W$.
Hence, $(G_n)$ has an asymptotically degree distribution, and this equals
the distribution of the random variable $D_W\=\intoi W(U,y)\dd y$.
\end{theorem}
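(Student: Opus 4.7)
The plan is to use the method of moments. The key observation is that, for every integer $k\ge 1$, both the $k$-th moment of $D_{G_n}/|G_n|$ and the $k$-th moment of $D_W$ are naturally expressed as a homomorphism density of the star $K_{1,k}$, evaluated at $G_n$ and at $W$ respectively. The hypothesis $G_n\to W$ then immediately supplies convergence of all moments, and a standard moment-problem argument (valid because $D_W\in\oi$) upgrades this to convergence in distribution.

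First I would unpack the moments of $D_W$: with $w(x)\=\intoi W(x,y)\dd y$ so that $D_W=w(U)$ for $U$ uniform on $\oi$, Fubini gives
\begin{equation*}
\E D_W^k = \intoi w(x)^k\dd x = \int_{\oi^{k+1}} \prod_{i=1}^k W(x,y_i)\dd x\dd y_1\dotsm\dd y_k = t(K_{1,k},W).
\end{equation*}

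Next I would compute the moments on the graph side. The $k$-th moment of $D_{G_n}/|G_n|$ equals $|G_n|^{-(k+1)}\sum_{v\in V(G_n)} d_v^k$, and $\sum_v d_v^k$ is exactly the number of (arbitrary) maps $V(K_{1,k})\to V(G_n)$ that are graph homomorphisms (the center mapped to $v$, the leaves to any $k$-tuple of neighbours of $v$, with repetitions allowed). Hence
\begin{equation*}
\E\bigpar{D_{G_n}/|G_n|}^k = t(K_{1,k},G_n),
\end{equation*}
where $t$ now counts arbitrary maps. Since $|G_n|\to\infty$, the quantities $t(K_{1,k},G_n)$ and $\tinj(K_{1,k},G_n)$ differ by $O(|G_n|^{-1})$, so $G_n\to W$ yields $t(K_{1,k},G_n)\to t(K_{1,k},W)$, and thus $\E\bigpar{D_{G_n}/|G_n|}^k\to\E D_W^k$ for every fixed $k$.

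Finally, since $D_W$ is supported in the bounded interval $\oi$, its distribution is uniquely determined by its moments, and convergence of all moments to those of $D_W$ then forces $D_{G_n}/|G_n|\dto D_W$ by the classical method of moments. I do not anticipate any substantive obstacle; the only points requiring a line of bookkeeping are the identification of $\sum_v d_v^k$ with the count of star-homomorphisms and the $O(|G_n|^{-1})$ comparison between $t$ and $\tinj$.
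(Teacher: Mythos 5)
Your proof is correct and follows exactly the same route as the paper's: identify the $k$-th moments of both $D_{G_n}/|G_n|$ and $D_W$ with the star-homomorphism density $t(K_{1,k},\cdot)$ and conclude by the method of moments, using boundedness to resolve the moment problem. You simply spell out the Fubini computation and the $t$ versus $\tinj$ bookkeeping, which the paper compresses into ``it is easily seen.''
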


\begin{proof}
  It is easily seen that, for every $k\ge1$, the moment $\E (D_G/|G|)^k$ equals
  $t(S_k,G)$, where $S_k=K_{1,k}$ is a star with $k+1$ vertices, and
  similarly the moment $\E W_G^k=t(S_k,W)$.
Consequently, 
$\E (D_{G_n}/|G_n|)^k=t(S_k,G_n)\to t(S_k,W)=\E D_W$ for every
  $k\ge1$, and thus $D_{G_n}\dto D_W$ by the method of moments.
\end{proof}

\begin{corollary}\label{Cdegree}
Let $(G_n)$ be a sequence of graphs and $W$ a graphon such that
  $G_n\to W$.
Then $G_n$ is almost $p$-regular if and only if
$\intoi W(x,y)\dd y=p$ for \aex{} $x\in\oi$.
\nopf
\end{corollary}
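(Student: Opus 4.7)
The plan is to deduce this directly from \refT{Tdegree}. By that theorem, the hypothesis $G_n\to W$ gives $D_{G_n}/|G_n|\dto D_W$, where $D_W=w(U)$ for $U\sim U\oi$ and $w(x)\=\intoi W(x,y)\dd y$.

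Now, by the discussion preceding the statement, $(G_n)$ is almost $p$-regular iff $D_{G_n}/|G_n|\pto p$. Since $D_{G_n}/|G_n|\in\oi$ is uniformly bounded, this is equivalent to convergence in distribution to the constant $p$, i.e., to $D_W=p$ a.s. The latter in turn says $w(U)=p$ almost surely, which (because $U$ has the uniform distribution on $\oi$) is the same as $w(x)=p$ for \aex{} $x\in\oi$.

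Combining these equivalences, we obtain
\begin{equation*}
\text{$(G_n)$ almost $p$-regular} \iff D_W=p \text{ a.s.} \iff \intoi W(x,y)\dd y=p \text{ for \aex{} } x,
\end{equation*}
as required. There is no real obstacle here; the substantive content is all in \refT{Tdegree}, and the corollary is just the specialization of the distributional convergence to a degenerate limit.
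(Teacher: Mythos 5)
Your proposal is correct and matches the paper's implicit approach (the corollary is marked \nopf, i.e.\ the paper considers it immediate from \refT{Tdegree}): one passes from $D_{G_n}/|G_n|\dto D_W$ to the equivalence of convergence in probability to the constant $p$ with degenerate convergence in distribution, and then identifies the law of $D_W=w(U)$ with the pushforward of Lebesgue measure by $w$. One small remark: the appeal to uniform boundedness is superfluous at the step you invoke it, since convergence in probability to a constant is equivalent to convergence in distribution to that constant for arbitrary real random variables; but this extra hypothesis does no harm.
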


In particular, a \qr{} sequence of graphs is almost regular, but 
the converse does not hold.

Motivated by \refC{Cdegree},
we say that a graphon $W$ is \emph{$p$-regular} if its marginal
$\intoi W(x,y)\dd y=p$ a.e.
This is evidently not a \qr{} property of graphons, but it can be used
in conjuction with the failed case $\gc=1/2$ in \refS{Scut}.
We find the following lemmas.

\begin{lemma}
  \label{LDcut}
Let $\ga\in(0,1)$.
If $f:\oi^2\to\bbR$ is a symmetric measurable function such that 
$\int_{A\times(\cpax)}f=0$ for every subset $A$ of $\oi$ with
$\gl(A)=\gc$, and $\intoi f(x,y)\dd y=0$ for \aex{} $x$, then $f=0$ a.e. 
\end{lemma}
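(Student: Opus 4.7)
The plan is to recycle the argument from the proof of \refL{Lcut} almost verbatim, noting that the key symmetrization identity produced there actually holds for every $\gc\in(0,1)$ (including $\gc=1/2$), and that the extra marginal hypothesis closes the gap that previously forced the exclusion $\gc\neq 1/2$.

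Concretely, I would let $f_1(x)\=\intoi f(x,y)\dd y$ as before. For any $A\subseteq\oi$ with $\gl(A)=\gc$, the computation in \eqref{fx} gives
\begin{equation*}
 0=\int_{A\times(\cpax)} f = \int_{A\times A}\Bigpar{\tfrac{1}{\gc}f_1(x)-f(x,y)}\dd x\dd y.
\end{equation*}
Applying \refL{L4} to the symmetric function $\tfrac{1}{2\gc}\bigpar{f_1(x)+f_1(y)}-f(x,y)$, whose symmetrization equals itself, yields
\begin{equation*}
 f(x,y)=\frac{1}{2\gc}\bigpar{f_1(x)+f_1(y)}\qquad\text{a.e.}
\end{equation*}
This step does not require $\gc\neq 1/2$; it was only at the very end of the proof of \refL{Lcut} that $\gc\neq 1/2$ was used to deduce $f_1=0$ a.e.\ from integrating the identity.

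Here, however, $f_1=0$ a.e.\ is exactly the additional hypothesis. Substituting it into the displayed identity immediately gives $f(x,y)=0$ for a.e.\ $(x,y)\in\oi^2$, which is the desired conclusion. There is no real obstacle: the analytic work has already been done in \refL{L4} and in the proof of \refL{Lcut}; the role of the marginal assumption is precisely to replace the arithmetic constraint $\gc\neq 1/2$ that previously eliminated the anomalous graphons $W(x,y)=h(x)+h(y)$ discussed after \refL{Lcut}.
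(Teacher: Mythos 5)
Your proof is correct and is essentially identical to the paper's, which also derives the identity $f(x,y)=\frac{1}{2\gc}\bigpar{f_1(x)+f_1(y)}$ a.e.\ by rerunning the argument of \refL{Lcut}, and then concludes immediately from the hypothesis $f_1=0$ a.e. The paper's proof is simply a one-line remark to this effect.
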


\begin{proof}
  The proof of \refL{Lcut} shows that \eqref{fy} holds, where now by
  assumption $f_1=0$.
\end{proof}

\begin{lemma}  \label{LD1}
Let $p\in\oi$ and $\gc\in(0,1)$.
Suppose that $(G_n)$ is an almost $p$-regular sequence of graphs and that
$G_n\to W$ for some graphon $W$.
Then the following are equivalent:
\begin{romenumerate}
  \item\label{LDcut1a}
For all subsets $U$ of\/ $V(G_n)$
with $|U|=\floor{\gc\gnabs}$,
\begin{equation}\label{tcutd}
   \egn(U,\cpu)=p\ga(1-\ga)|G_n|^2+o\bigpar{|G_n|^2}.
\end{equation}
\item\label{LDcut1b}
For all subsets $A$ of\/ $\oi$ with $\gl(A)=\gc$,
\begin{equation*}
\int_{A\times \cp A} W(x,y) = p\gc(1-\gc).
\end{equation*}
\item\label{LDcut1c}
$W=p$  \aex{}
\end{romenumerate}
\end{lemma}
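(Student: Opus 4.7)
The plan is to mirror the proof of \refL{Lcut1}, replacing the single appeal to \refL{Lcut} at the final step by \refL{LDcut}, and exploiting the almost $p$-regularity precisely to supply the extra marginal hypothesis of \refL{LDcut}. This is exactly what makes the case $\gc = 1/2$ work here, where \refL{Lcut1} with a fixed size fails.

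First I would establish the equivalence \ref{LDcut1a}$\iff$\ref{LDcut1b}. This is the standard ``combinatorial to graphon'' translation already carried out in Lemmas \refL{LA1}, \refL{LA1c} and the fixed-size part of \refL{Lcut1}: using $G_n\to W$ one passes via $\dcut(W_{G_n},W)\to0$ and measure-preserving rearrangements to express $\egn(U,\cpu)$ as an integral of $W_n$ over $U''\times (U'')^c$, then approximates an arbitrary $A\subseteq\oi$ with $\gl(A)=\gc$ by random unions $B$ of the intervals $\inj''$ (concentrated around measure $\gc$ via Chebyshev, as in \refL{LA1c}). The only bookkeeping novelty is that $|U||\cpu| = \gc(1-\gc)|G_n|^2 + O(|G_n|)$ when $|U|=\floor{\gc|G_n|}$, so the right-hand sides match $p\gc(1-\gc)|G_n|^2+o(|G_n|^2)$ and $p\gc(1-\gc)$ respectively.

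Next, \ref{LDcut1c}$\implies$\ref{LDcut1b} is immediate since $\int_{A\times\cpA}p=p\gc(1-\gc)$.

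The heart of the argument is \ref{LDcut1b}$\implies$\ref{LDcut1c}. I would apply \refL{LDcut} to $f(x,y)\=W(x,y)-p$, which is symmetric and measurable. The integral hypothesis of \refL{LDcut} holds since
\begin{equation*}
\int_{A\times\cpA}(W-p) = \int_{A\times\cpA}W - p\gc(1-\gc) = 0
\end{equation*}
for every $A$ with $\gl(A)=\gc$, by \ref{LDcut1b}. The marginal hypothesis $\intoi f(x,y)\dd y = 0$ for \aex{} $x$ is precisely where the almost $p$-regularity enters: by \refC{Cdegree}, $(G_n)$ almost $p$-regular together with $G_n\to W$ gives $\intoi W(x,y)\dd y=p$ for \aex{} $x$, so $\intoi f(x,y)\dd y=0$ a.e. \refL{LDcut} then yields $W=p$ a.e.

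I do not anticipate any real obstacle: the combinatorics-to-analysis step is routine boilerplate by now, and the crucial analytic lemma (\refL{LDcut}) is already in hand. The only conceptual point worth emphasising is that \refL{LDcut} is valid for all $\gc\in(0,1)$ including $\gc=1/2$, whereas \refL{Lcut} required $\gc\ne1/2$; thus the almost-regularity assumption is exactly what rescues the failed endpoint case of \refT{Tcutc}.
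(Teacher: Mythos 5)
Your proposal is correct and follows essentially the same route as the paper: establish \ref{LDcut1a}$\iff$\ref{LDcut1b} by the standard translation, and then obtain \ref{LDcut1b}$\implies$\ref{LDcut1c} by applying \refL{LDcut} to $W-p$, with the marginal hypothesis of \refL{LDcut} supplied by almost $p$-regularity via \refC{Cdegree}. The only cosmetic difference is that the paper simply invokes \refL{Lcut1} to dispose of \ref{LDcut1a}$\iff$\ref{LDcut1b} (and of \ref{LDcut1b}$\implies$\ref{LDcut1c} when $\gc\neq\frac12$), leaving only the $\gc=\frac12$ endpoint to handle via \refL{LDcut}, whereas you re-derive the translation and run \refL{LDcut} uniformly in $\gc$; both are fine, and your version is arguably cleaner in treating all $\gc$ at once.
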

\begin{proof}
By \refL{Lcut1}, it remains only to show that
\ref{LDcut1b}$\implies$\ref{LDcut1c} in the case $\gc=1/2$.
However, by \refC{Cdegree}, $W$ is $p$-regular, 
so \ref{LDcut1b}$\implies$\ref{LDcut1c} follows by
\refL{LDcut} applied to $W-p$.
\end{proof}

\refL{LD1} yields, by our general machinery, immediately the following
theorem by \citet{ChungGW:quasi}, which supplements \refT{Tcutc}
in the case $\gc=1/2$ (and otherwise is a trivial consequence of \refT{Tcutc}).
\begin{theorem}[\citet{ChungGW:quasi}]
  \label{TDcutc}
Let $0\le p\le1$ and $\gc\in(0,1)$.
Suppose that $(G_n)$ is a sequence of graphs with
$|G_n|\to\infty$.
Then $(G_n)$ is \pqr{} if and only if
$(G_n)$ is almost $p$-regular and
\eqref{tcutd} holds
for all subsets $U$ of $V(G_n)$ with $|U|=\floor{\gc\gnabs}$.
\end{theorem}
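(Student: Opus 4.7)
The proof will be a direct application of the general three-step machinery developed in the paper, with Lemma \ref{LD1} doing essentially all the work.

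For the ``only if'' direction, I would argue as follows. If $(G_n)$ is \pqr, then $G_n \to p$ (the constant graphon), and in particular $\intoi p\dd y = p$ for every $x$, so by \refC{Cdegree} the sequence is almost $p$-regular. Moreover, the constant graphon $p$ trivially satisfies condition \ref{LDcut1b} of \refL{LD1}, since $\int_{A\times \cp A} p = p\gc(1-\gc)$. The implication \ref{LDcut1b}$\implies$\ref{LDcut1a} of \refL{LD1} (applied to the constant subsequence $W\equiv p$) then gives \eqref{tcutd} for all subsets $U$ of $V(G_n)$ with $|U|=\floor{\gc\gnabs}$.

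For the ``if'' direction, the plan is to use compactness of the space of graph limits. Suppose $(G_n)$ is almost $p$-regular and satisfies \eqref{tcutd} for all $U$ of size $\floor{\gc\gnabs}$, but that $(G_n)$ is not \pqr. Then there is a subsequence, which I still call $(G_n)$, that converges to some graphon $W$ with $W \neq p$ on a set of positive measure. The almost $p$-regularity is preserved along this subsequence, and the hypothesis \eqref{tcutd} continues to hold. Applying \refL{LD1}, specifically \ref{LDcut1a}$\implies$\ref{LDcut1c}, we conclude $W = p$ almost everywhere, a contradiction. Hence every convergent subsequence of $(G_n)$ has limit $p$, and by compactness $G_n \to p$, i.e., $(G_n)$ is \pqr.

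The proof presents essentially no obstacle, since all difficulty has been absorbed into the earlier lemmas: the translation between graph and graphon conditions is handled by \refL{LD1}\ref{LDcut1a}$\iff$\ref{LDcut1b}, the degree condition is translated by \refC{Cdegree}, and the delicate case $\gc=1/2$ (which is precisely what \refT{TDcutc} addresses beyond \refT{Tcutc}) is treated by \refL{LDcut}, where the marginal assumption $\intoi f(x,y)\dd y = 0$ forces $f_1 = 0$ in \eqref{fy} and thereby excludes the counterexamples of the form $W(x,y) = h(x)+h(y)$. Thus the main body of the proof of \refT{TDcutc} is simply an invocation of \refL{LD1} combined with the compactness of graph limit space, exactly as in all the previous proofs in the paper.
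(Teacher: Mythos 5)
Your proposal is correct and follows exactly the paper's own (very brief) argument: the paper dispatches \refT{TDcutc} with a single sentence saying that \refL{LD1} yields it "by our general machinery," which is precisely the compactness-plus-subsequence-plus-\refL{LD1} argument you spell out. The only cosmetic oddity is your phrase "constant subsequence $W\equiv p$" in the ``only if'' direction — what you mean is that $G_n\to p$ so \refL{LD1} applies with the limit graphon $W=p$, and \ref{LDcut1b} is then trivially satisfied, which is fine.
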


\appendix

\section{A measure-theoretic lemma}\label{Appa}

A \emph{multiaffine} polynomial is a polynomial in several
variables $\set{x_\nu}_{\nu\in\cI}$, for some
(finite) index set $\cI$, such that each variable has degree at
most 1; it can thus be written as a linear combination of the
$2^{|\cI|}$
monomials $\prod_{\nu\in \cJ} x_\nu$ for subsets $\cJ\subseteq\cI$.
We are interested in the case when the index set $\cI$
consists of the $\binom m2$ pairs $\set{i,j}$
with $1\le i<j\le m$, for some $m\ge2$. In this
case we define, for any symmetric function
$W:\oi^2\to\bbR$ and $x_1,\dots,x_m\in\oi$,
\begin{equation}\label{phiw}
  \phiw(x_1,\dots,x_m)\=\Phi\bigpar{(W(x_i,x_j))_{i<j}}.
\end{equation}

The functions $\psifw$ and $\psiqfw$ considered
above are of this type, see \eqref{psifw} and \eqref{psiqfw},
as well as their symmetrizations
$\tpsifw$ and $\tpsiqfw$. As we have seen above, in
all our proofs we derive as an intermediate result an equation of the
type $\Phi_W(\xxm)=\gax$ \aex{} for some multiaffine
$\Phi$, and it would simplify the
analysis of this equation if we were able to strengthen this
to $\Phi_{W}(\xxm)=\gax$ for \emph{every}
$\xxm\in\oi$, possibly after modifying $W$ on a
null set. We thus are led to the following measure-theoretic problem,
with applications to quasi-random graphs:

\begin{problem}\label{P1}
Suppose that $\Phi\bigpar{(w\ij)_{i<j}}$ is a multiaffine polynomial in the
  $\binom m2$ variables $w\ij$, $1\le  i<j\le m$, for some $m\ge2$.
Suppose further that $W:\oi^2\to\oi$ is a graphon such that
$\Phi_W(\xxm)=\gax$ a.e. for some $\gax\in\bbR$.
Does there always exist a
graphon $W'$ with $W'=W$ \aex{} such that
$\Phi_{W'}(\xxm)=\gax$ for \emph{every} $\xxm\in\oi$?
\end{problem}

We were able to prove such a result for a special class of
$\Phi$ in \refL{LB1} (but see \refR{RB1}). In general, we do not know the
answer, but we can prove the following weaker result that suffices for
us;
the important feature is that the set $E$ below contains the
diagonal; hence we can make the equation 
$\Phi_{W'}(\xxm)=\gax$ hold (typically, at least) also when
several, or all, $x_i$ coincide.

\begin{remark}
The elimination of a null set in \refP{P1} seems related to the infinite
version of the (hypergraph) removal lemma \cite{ElekSz},
where the objective, in a different but related context, also is to
replace a null set by an empty set.  
\end{remark}

\begin{lemma}\label{LD}
  Suppose that $\Phi\bigpar{(w\ij)_{i<j}}$ is a multiaffine polynomial in the
  $\binom m2$ variables $w\ij$, $1\le  i<j\le m$, for some $m\ge2$.
Suppose further that\/ $W:\oi^2\to\oi$ is a graphon,
  \ie, a symmetric measurable function, and 
suppose that $\phiw(\xxm)=\gax$ for \aex{} $\xxm\in\oi$ and some $\gax\in\bbR$.
Then there is a version\/ $W'$ of\/ $W$ and a symmetric set  $E\subseteq\oi^2$
such that $\gl(\oi^2\setminus E)=0$, $E\supseteq\set{(x,x):x\in\oi}$, and 
$\Phi_{W'}(x_1,\dots,x_m)=\gax$ for all
  $x_1,\dots,x_m$ such that $(x_i,x_j)\in E$ for
  every pair $(i,j)$ with $1\le i<j\le m$.
\end{lemma}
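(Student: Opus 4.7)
The plan is to construct $W'$ by redefining $W$ at its non-Lebesgue points via approximate limits, and then to take $E$ to be the 2D Lebesgue-point set together with the diagonal. The required identity $\Phi_{W'}(x_1,\ldots,x_m)=\gax$ will be verified by an averaging argument that exploits the multiaffine structure of $\Phi$ together with the a.e.\ hypothesis. First I would let $L\subseteq\oi^2$ denote the set of 2D Lebesgue points of $W$; by Lebesgue's differentiation theorem, $\gl(\oi^2\setminus L)=0$, and $L$ is symmetric. For $(x,y)\in L$ define
\begin{equation*}
W'(x,y)\=\lim_{\eps\downto 0}\frac{1}{(2\eps)^2}\int_{|u-x|<\eps}\int_{|v-y|<\eps}W(u,v)\dd u\dd v,
\end{equation*}
which exists by definition of Lebesgue point. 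Extend symmetrically off $L$ by any convenient rule, with the diagonal values $W'(x,x)$ fixed by the separate construction described below. Then $W'=W$ a.e., and $W'$ is approximately continuous at every point of $L$.

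Take $E\=L\cup\set{(x,x):x\in\oi}$. Given any tuple $(\xxm)$ with $(x_i,x_j)\in E$ for all $i<j$, the hypothesis $\phiw=\gax$ \aex{} yields, for \emph{every} $\eps>0$,
\begin{equation*}
\frac{1}{(2\eps)^m}\int_{B(x_1,\eps)\times\dots\times B(x_m,\eps)}\phiw(\yym)\dd y_1\dotsm\dd y_m=\gax,
\end{equation*}
since the integrand equals $\gax$ a.e.\ on the box. It thus suffices to show that as $\eps\downto 0$ the \lhs{} converges to $\Phi_{W'}(\xxm)$. Expanding the multiaffine polynomial $\Phi(w)=\sum_T c_T\prod_{(i,j)\in T}w\ij$, the average becomes $\sum_T c_T M_T(\eps)$ with $M_T(\eps)=\E\bigsqpar{\prod_{(i,j)\in T}W(X_i,X_j)}$ and $X_i\sim U(B(x_i,\eps))$ independent. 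For each $T$, an iterated Fubini/Lebesgue-differentiation argument, valid because $W$ is bounded by $1$, would show $M_T(\eps)\to\prod_{(i,j)\in T}W'(x_i,x_j)$. When all $x_i$ are distinct and each pair $(x_i,x_j)$ lies in $L$, this is a standard (if tedious) exercise in multi-parameter Lebesgue differentiation using the approximate continuity of $W'$ at each pair and iterating the inner integrals coordinate by coordinate.

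The main obstacle is the diagonal contribution: when some $x_i=x_j$, the pair $(x_i,x_j)$ belongs to $E$ by fiat but need not be a 2D Lebesgue point. The plan is to define $W'$ on the diagonal by the iterated limit
\begin{equation*}
W'(x,x)\=\lim_{\eps\downto 0}\frac{1}{(2\eps)^2}\int_{|u-x|<\eps}\int_{|v-x|<\eps}W(u,v)\dd u\dd v,
\end{equation*}
which, by reducing via Fubini to a 1D Lebesgue differentiation on the marginal $u\mapsto\int W(u,v)\dd v$ and iterating, exists for \aex{} $x\in\oi$ (in the 1D sense). On the exceptional 1D null set of bad diagonal points, the multiaffinity of $\Phi$ is brought in: the equation $\Phi_{W'}(\xxm)=\gax$ is linear in each variable $w\ij$ separately, so the constraint on $W'(x,x)$ coming from any fixed $m$-tuple is linear, and the a.e.\ hypothesis guarantees that at least one root in $\oi$ exists; one picks any such root.

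The crucial technical step — and the hardest part — will be verifying consistency: checking that the value assigned to $W'(x,x)$ does not depend on which $m$-tuple one uses to derive it, so that the limit-computation of $M_T(\eps)$ genuinely produces $\prod_{(i,j)\in T}W'(x_i,x_j)$ even when several $x_i$'s coincide. Here one expects to invoke the multiaffine structure of $\Phi$ heavily, solving a linearised system for the diagonal values and using the full-measure a.e.\ validity of $\phiw=\gax$ to force every such system to be compatible. Everything outside this consistency check reduces to standard measure theory; because of this, the argument generalises cleanly beyond the special case $\Phi=\prod_{ij\in E(F)}w\ij$ already handled by \refL{LB1}.
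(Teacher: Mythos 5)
The off-diagonal part of your plan is essentially sound and close in spirit to what the paper does: restricting to a full-measure set of (approximate) 2D Lebesgue points of $W$, using boundedness of $W$, and pushing iterated averages through each monomial of the multiaffine $\Phi$ to get $\Phi_{W'}(\xxm)=\gax$ whenever all the $x_i$ are distinct and each pair sits in the good set. But the whole difficulty of the lemma is the diagonal, and there your argument has a genuine gap.

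You propose to define $W'(x,x)$ by the iterated square average $\lim_{\eps\downto 0}(2\eps)^{-2}\int_{(x-\eps,x+\eps)^2}W$, and you assert this limit exists for a.e.\ $x$ "by reducing via Fubini to a 1D Lebesgue differentiation on the marginal." This is not correct. The diagonal is a 2D null set, so the Lebesgue differentiation theorem (in any number of parameters) tells you nothing about it, and the Fubini reduction you sketch is not a reduction at all: the inner average $(2\eps)^{-1}\int_{|v-x|<\eps}W(u,v)\dd v$ converges to $W(u,x)$ only for those $x$ that are Lebesgue points of $v\mapsto W(u,v)$, and that exceptional set depends on $u$; you cannot then average over $u\approx x$ and conclude. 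The paper flags exactly this obstacle explicitly before \refL{LLD}, and this is precisely why the proof needs a substitute for Lebesgue differentiation on the diagonal. The substitute is \refL{LLD}, which is a genuinely combinatorial statement (proved via the strong analytic regularity lemma plus Ramsey's theorem): for any $\eps>0$ one can find, inside any interval around $x$, a subset $B$ of definite relative measure on which $W$ is $\eps$-close to a constant \emph{in cut norm}. That lemma, and the use of a Banach limit to extract a single consistent value $W'(x,x)=\chi((w_n(x))_n)$ from the possibly non-convergent sequence of constants, is the heart of the proof and is entirely missing from your plan.

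Your fallback — on bad diagonal points, use multiaffinity to view $\Phi_{W'}(\xxm)=\gax$ as a linear constraint on $W'(x,x)$ and "pick any root" — is not a proof either, and you acknowledge as much: the consistency check that the same root works for \emph{every} $m$-tuple involving $x$ is exactly the hard part. There is no reason a priori that the linear constraints coming from different tuples are compatible with a single value, and the a.e.\ validity of $\phiw=\gax$ does not by itself enforce it (it gives you no pointwise information at all near the diagonal, which is a null set). In the paper's proof this consistency is obtained for free because the value $w_n(x)$ found by \refL{LLD} is chosen once and for all (depending only on $x$ and $n$), its defining cut-norm estimate is uniform and is then fed through the Lipschitz-in-cut-norm property of $\Phi\mapsto\int\phiw$ for \emph{every} tuple simultaneously, and the Banach limit is a single linear multiplicative functional applied uniformly. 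So the step you identify as "the hardest part" is indeed the hard part, and your proposal does not carry it out; as written, the proof does not go through.
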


The proof is rather technical, and is postponed until the end of the appendix.

As a consequence, we obtain a convenient criterion (patterned after
\cite{SS:ind}). We say that a graphon $W$ is
\emph{finite-type}, or more specifically
\emph{$k$-type}, if there exists a partition of
$\oi$ into $k$ sets $S_1,\dots,S_k$ such that
$W$ is constant on each rectangle $S_i\times S_j$.
Making a rearrangement, we can without loss of generality assume that
the sets $S_i$ are intervals.
(See \cite{LSos} for a study of finite-type graph limits and the
corresponding sequences of graphs, which 
generalize \qr{} graphs.)

\begin{remark}
  In this paper, we consider for convenience only graphons defined on
  $\oi$, but the definition extends to any
  probability space. Using this, we can equivalently, 
and more naturally, say that $W$ is finite-type if it is
equivalent to a graphon defined on a finite probability space.
\end{remark}

\begin{theorem}
  \label{TD}
 Suppose that $\Phi\bigpar{(w\ij)_{i<j}}$ is a multiaffine polynomial in the
  $\binom m2$ variables $w\ij$, $1\le  i<j\le m$, for some $m\ge2$,
  and that $\gax\in\bbR$. Then the following are equivalent.
  \begin{romenumerate}
\item\label{tdw}
There exists a graphon\/ $W$ such that	
$\phiw(\xxm)=\gax$ for \aex{} $\xxm\in\oi$, but\/ $W$ is
\emph{not} \aex{} constant.

\item\label{td2}
There exists a 2-type graphon\/ $W$ such that	
$\phiw(\xxm)=\gax$ for all $\xxm$, but\/ $W$ is
\emph{not} (\aex) constant. 

\item\label{tduvs}
There exist numbers $u,v,s\in\oi$, not all equal, such
that for every subset $A\subseteq[m]$, if we choose
\begin{equation}\label{uvs}
  w\ij\=
  \begin{cases}
	u,&i,j\in A, \\
	v,&i,j\notin A, \\
	s,&i \in A,\,j\notin A \text{ or conversely},
  \end{cases}
\end{equation}
then $\Phi((w\ij)_{i<j})=\gax$.
  \end{romenumerate}
\end{theorem}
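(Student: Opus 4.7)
The plan is to argue (iii) $\Longrightarrow$ (ii) $\Longrightarrow$ (i) $\Longrightarrow$ (iii). The first two implications are routine, and the substantive one is (i) $\Longrightarrow$ (iii), which is built directly on \refL{LD}.

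For (iii) $\Longrightarrow$ (ii) I would partition $\oi$ into two intervals $S_1,S_2$ of positive measure and define the 2-type graphon $W$ by taking the value $u$ on $S_1\times S_1$, $v$ on $S_2\times S_2$, and $s$ on the mixed rectangles. Given any $(\xxm)\in\oi^m$, the set $A\=\set{i:x_i\in S_1}$ recovers exactly the $w\ij$ pattern of (iii), so $\phiw(\xxm)=\gax$ holds pointwise; since $u,v,s$ are not all equal and both $S_i$ have positive measure, $W$ is not \aex{} constant. The implication (ii) $\Longrightarrow$ (i) is immediate because pointwise equality implies \aex{} equality.

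The substantive direction (i) $\Longrightarrow$ (iii) runs as follows. First apply \refL{LD} to $W$ to produce a version $W'$ together with a symmetric set $E\supseteq\set{(x,x):x\in\oi}$ with $\gl(\oi^2\setminus E)=0$, such that $\Phi_{W'}(\xxm)=\gax$ whenever every pair $(x_i,x_j)$ with $i<j$ lies in $E$. The key claim is that there exist $a\neq b$ in $\oi$ with $(a,b)\in E$ for which the triple $(u,v,s)\=(W'(a,a),W'(b,b),W'(a,b))$ is not all equal. Once such a pair is in hand, (iii) follows by choosing, for a given $A\subseteq[m]$, the tuple $(\xxm)$ with $x_i=a$ when $i\in A$ and $x_i=b$ otherwise: every pair $(x_i,x_j)$ then lies in $E$ (it is either $(a,a)$, $(b,b)$, $(a,b)$, or $(b,a)$), so $\Phi_{W'}(\xxm)=\gax$, and this value coincides with $\Phi((w\ij))$ assembled from $A,u,v,s$ as in (iii).

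To prove the claim I would argue by contradiction. If no suitable $(a,b)$ exists, then $W'(a,a)=W'(b,b)=W'(a,b)$ holds for \aex{} $(a,b)\in\oi^2$ (using that $E$ has full measure and the diagonal is null). A Fubini argument then selects a $b_0$ for which $W'(a,b_0)=W'(a,a)=W'(b_0,b_0)$ for \aex{} $a$, which forces $a\mapsto W'(a,a)$ to equal the constant $c\=W'(b_0,b_0)$ on a co-null set; plugging back yields $W'(a,b)=c$ for \aex{} $(a,b)$, so $W=W'$ is \aex{} constant, contradicting (i). I expect the only real obstacle is that \refL{LD} itself must be proved, a nontrivial task deferred to the appendix; granted that tool, the rest is a short Fubini calculation.
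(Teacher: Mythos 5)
Your proposal is correct, and the overall structure matches the paper's: establish (ii)$\iff$(iii) by a direct translation between 2-type graphons and the $(u,v,s)$-pattern, note (ii)$\implies$(i) is trivial, and do the real work in (i)$\implies$(iii) via \refL{LD}. Where you genuinely diverge from the paper is in the last step of (i)$\implies$(iii). Both you and the paper deduce from ``(iii) fails'' plus \refL{LD} the property that $W'(x,x)=W'(y,y)=W'(x,y)$ whenever $(x,y)\in E$. To turn this into ``$W$ is a.e.\ constant,'' the paper runs a topological argument: it picks a Lebesgue point of $W$ restricted to $E$, extracts a compact set $K'\subseteq\oi$ of positive measure on which $W(x,x)$ lies in a prescribed small interval, repeats this for two disjoint intervals to get $K_1,K_2$, and then contradicts $\gl(E)=1$ via $E\cap(K_1\times K_2)=\emptyset$. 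You instead apply Fubini directly to $\tE:=E\setminus\mathrm{diag}$: pick $b_0$ whose section $\tE_{b_0}$ is co-null, set $c:=W'(b_0,b_0)$, conclude $W'(a,a)=c$ for $a\in\tE_{b_0}$, and then for $(a,b)\in\tE\cap(\tE_{b_0}\times\oi)$ (a full-measure set) get $W'(a,b)=W'(a,a)=c$. This is shorter and cleaner, and the apparent measurability worry about $a\mapsto W'(a,a)$ never actually arises because the argument only ever uses the measurability of $E$ and of its sections, not of the diagonal restriction of $W'$. In short: same skeleton, but your Fubini calculation is a simpler substitute for the paper's Lebesgue-point/compactness/projection argument.
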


In \ref{td2}, we may further require that the two parts of
$\oi$ are the intervals $[0,\frac12]$ and $(\frac12,1]$.

The equivalence of \ref{tdw} and \ref{td2} shows
that if a property of the type $\phiw=\gax$
\aex{} does not imply that $W$ is \aex{}
constant (\ie, it is not a (mixed) \qr{} property for graphons),
then there exists a counter-example that is a 2-type graphon.
This generalizes one of the results for induced subgraph counts by
\citet{SS:ind}.

\begin{proof}
\ref{td2}$\iff$\ref{tduvs}:
A 2-type graphon $W$ is defined by a partition $(S_1,S_2)$
of $\oi$ and three numbers $u,v,s\in\oi$ such
that $W=u$ on $S_1\times S_1$, 
$W=v$ on $S_2\times S_2$, and
$W=s$ on $(S_1\times S_2)\cup(S_2\times S_1)$.
It is easy to see that, for any $S_1$ and $S_2$ with
$\gl(S_1),\gl(S_2)>0$, such a graphon $W$ satisfies
$\phiw=\gax$  if and only if
$\Phi((w\ij)_{i<j})=\gax$ with $w\ij$ as
in \eqref{uvs}, for every choice of
$A\subseteq[m]$. (Consider $x_i$ such that
$x_i\in S_1\iff i\in A$.) 
Moreover, $W$ is  constant $\iff u=v=s$.

\ref{td2}$\implies$\ref{tdw}: Trivial.

\ref{tdw}$\implies$\ref{tduvs}: 
Suppose that $W$ is a graphon as in \ref{tdw} but that
\ref{tduvs} does not hold; we will show that this leads to a contradiction.
Let $W'$ and $E$ be as in \refL{LD}; for notational
simplicity we replace $W$ by $W'$ and assume thus $W'=W$.

Suppose that $(x,y)\in E$. Given $A\subseteq[m]$,
let $x_i\=x$ for $i\in A$ and $x_i\= y$ for
$i\notin A$. Then $W(x_i,x_j)=w\ij$ as given by
\eqref{uvs} with $u=W(x,x)$, $v=W(y,y)$, $s=W(x,y)$.
Further, \refL{LD} shows that
$\Phi((w\ij)_{i<j})=\phiw(\xxm)=\gax$.
Since \ref{tduvs} does not hold, no such $u,v,s$ exist
except with $u=v=s$. Consequently, we have shown the following
property of $W$:
\begin{equation}
  \label{md}
\text{If  $(x,y)\in E$, then } W(x,x)=W(y,y)=W(x,y). 
\end{equation}

Now suppose, more strongly,  that $(x_0,y_0)$ is a Lebesgue point of
$E$, and that $U$ is an open interval with $W(x_0,y_0)\in U$. 
It follows from the definition of Lebesgue points, that in a sufficiently
small square $Q$ centered at $(x_0,y_0)$, the set
$B\=\set{(x,y)\in Q:W(x,y)\in U}$  has measure at least 
$\gl(Q)/2$. Since $\gl(E)=1$, the same holds for
$B\cap E$, and we may thus, by the regularity of the Lebesgue
measure, find a compact set $K\subseteq B\cap E$ with $\gl(K)>0$.
If $(x,y)\in K$, then $(x,y)\in E$, so by
\eqref{md}, $W(x,x)=W(x,y)\in U$. 
Consequently, if $K'$ is the projection of $K$ onto the
first coordinate, then $W(x,x)\in U$ for $x\in K'$;
furthermore, $K'$ is a compact, and thus measurable, subset of
$\oi$, and $\gl(K')>0$.

By assumption, our $W$ is not \aex{} constant. Thus
there exist two disjoint open intervals $U_1$ and $U_2$
such that $W\qw(U_\ell)\=\set{(x,y):W(x,y)\in
U_\ell}\subseteq\oi^2$ has positive measure,
$\ell=1,2$. Then also, for each $\ell=1,2$, 
$D_\ell\=E\cap W\qw(U_\ell)$ has positive
measure, so we may pick a Lebesgue point $(x_\ell,y_\ell)$ in
$D_\ell$.
By what we just have shown, this implies that there exists a compact set
$K_\ell\subseteq\oi$ with $\gl(K_\ell)>0$ and
$W(x,x)\in U_\ell$ for $x\in K_\ell$.

However, this means that if $(x,y)\in K_1\times K_2$, then
$W(x,x)\neq W(y,y)$, and thus by \eqref{md},
$(x,y)\notin E$. Hence $E\cap(K_1\times
K_2)=\emptyset$. Since $\gl(K_1\times K_2)>0$ and
$\gl(E)=1$, this is a contradiction.
\end{proof}

\begin{corollary}  \label{CD}
 Suppose that $\Phi\bigpar{(w\ij)_{i<j}}$ is a multiaffine polynomial in the
  $\binom m2$ variables $w\ij$, $1\le  i<j\le m$, for some $m\ge2$,
  and that $\gax\in\bbR$. 
If every  graphon\/ $W$ such that	
$\phiw(\xxm)=\gax$ for every $\xxm\in\oi$ is constant,
then
every  graphon\/ $W$ such that	
$\phiw(\xxm)=\gax$ for \aex{} $\xxm\in\oi$ is \aex{} constant.
\end{corollary}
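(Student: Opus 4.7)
The plan is to deduce \refC{CD} directly from \refT{TD} by contraposition. Suppose the conclusion fails: there exists a graphon $W$ such that $\phiw(\xxm)=\gax$ for \aex{} $\xxm\in\oi$, yet $W$ is not \aex{} constant. This is precisely the statement of \refT{TD}\ref{tdw}.

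By the equivalence \ref{tdw}$\iff$\ref{td2} established in \refT{TD}, there then exists a (2-type) graphon $W'$ such that $\Phi_{W'}(\xxm)=\gax$ for \emph{every} $\xxm\in\oi^m$, while $W'$ is not (\aex) constant. In particular, $W'$ is a graphon satisfying $\Phi_{W'}=\gax$ pointwise everywhere on $\oi^m$, but $W'$ is not constant, which contradicts the hypothesis of the corollary.

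Hence no such $W$ can exist, and every graphon $W$ with $\phiw=\gax$ \aex{} must itself be \aex{} constant. There is no separate work to do beyond invoking \refT{TD}; the substantive content (eliminating a null set via \refL{LD} and the Lebesgue-point argument reducing to the 2-type case) is already packaged in that theorem. Consequently the only conceptual step is identifying the hypothesis of \refC{CD} as the negation of \refT{TD}\ref{td2} and reading off the contrapositive.
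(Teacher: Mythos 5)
Your proof is correct and uses the same argument as the paper: both simply invoke the equivalence \ref{tdw}$\iff$\ref{td2} of \refT{TD}, with the paper phrasing it directly (the hypothesis rules out a graphon as in \ref{td2}, hence none as in \ref{tdw}) and you phrasing it as the contrapositive. There is no substantive difference.
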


In the terminology of \refR{Rqrw}, if 
``$\phiw(\xxm)=\gax$ everywhere'' is a (mixed) \qr{}
property, then so is
``$\phiw(\xxm)=\gax$ \aex''.
It is easily seen that the converse holds too; if $W$ is a
non-constant graphon such that
$\phiw(\xxm)=\gax$ for every $\xxm\in\oi$, then there exists a
non-constant $m$-type graphon with this property, and this
graphon is not \aex{} constant.

\begin{proof}
  The assumption implies that there is no 2-type graphon $W$ as
  in \refT{TD}(ii), and thus there is no graphon $W$ as
  in \refT{TD}(i).
\end{proof}

It remains to prove \refL{LD}. 
In order to do this,
we first prove the following lemma, which is a
(weak) substitute for the Lebesgue differentiation theorem when we
consider points on the diagonal only. (The Lebesgue differentiation theorem
says nothing about such points, since the diagonal is a null set. A
simple counter-example is $W(x,y)=\ett{x<y}$.)
We introduce  some further notation.

If $A\subseteq\oi$ with $\gl(A)>0$, let
$\gl_A$ be the normalized Lebesgue measure on $A$ given
by $\gl_A(B)\=\gl(A\cap B)/\gl(A)$,
$B\subseteq\oi$.
(In other words, $\gl_A$ is the distribution of a uniform random point
in $A$.)

The definition \eqref{cut} of the cut norm generalizes to
arbitrary measure spaces. In particular, if
$A\subseteq\oi$ with $\gl(A)>0$, we let
$\cnx WA$ denote the cut norm on $A\times A$ with respect to the normalized
measure $\gl_A$. More generally, if
$A$ and $B\subseteq\oi$ have positive measures, then 
\begin{equation*}
  \cnx{W}{A\times B}
\=
\sup_{S\subseteq A,\, T\subseteq B}
\int_{S\times T} W(x,y)\dd\gl_A(x)\dd\gl_B(y)
\end{equation*}
denotes the (normalized) cut norm on $A\times B$.

\begin{lemma}\label{LLD}
  For every $\eps>0$ there exists
  $\gd=\gd(\eps)>0$ such that if
  $W:\oi^2\to\oi$ is a symmetric and measurable
  function and $A\subseteq\oi$ with $\gl(A)>0$,
  then there exists $B\subseteq A$ with
  $\gl(B)\ge\gd\gl(A)$ and a real number
  $w\in\oi$ such that $\cnx{W-w}{B}<\eps$.
\end{lemma}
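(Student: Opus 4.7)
The plan is to prove Lemma~\ref{LLD} by an iterative density-boost argument, in the spirit of the Frieze--Kannan weak regularity lemma, but tuned to produce a single good subset of $A$ rather than a regular partition. Starting with $B_0 := A$ and $w_0 := \int_{B_0 \times B_0} W \dd\gl_{B_0}\dd\gl_{B_0}$ (the average of $W$ on $B_0 \times B_0$ in the normalized measure $\gl_{B_0}$), at step $t$ I either stop, when $\cnx{W - w_t}{B_t} < \eps$, declaring $B = B_t$ and $w = w_t$; or else extract a cut-norm witness $S_t, T_t \subseteq B_t$ satisfying $|\int_{S_t \times T_t}(W - w_t)\dd\gl_{B_t}\dd\gl_{B_t}| \ge \eps$. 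Since $W \in \oi$, this witness automatically has $\gl_{B_t}(S_t), \gl_{B_t}(T_t) \ge \eps$.

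The heart of the proof is a density-boost step: from such a witness, produce $B_{t+1} \subseteq B_t$, chosen as an atom (or a union of atoms) of the refinement $\{S_t \cap T_t,\ S_t \setminus T_t,\ T_t \setminus S_t,\ B_t \setminus (S_t \cup T_t)\}$ of $B_t$, satisfying $\gl(B_{t+1}) \ge c(\eps)\gl(B_t)$ and $|w_{t+1} - w_t| \ge \eta(\eps)$, with $c(\eps),\eta(\eps)$ explicit positive functions of $\eps$. The key algebraic observation is that the rectangle average $\bar W_{S_t \times T_t}$ decomposes as a convex combination of the $\bar W$-averages on the four subrectangles induced by the refinement, and each of these is in turn a linear combination of candidate square averages $\bar W_{B' \times B'}$ for $B'$ in a finite family of unions of atoms; were all such candidate square averages $\eta$-close to $w_t$, a linear-algebraic collapse would force $\bar W_{S_t \times T_t}$ itself close to $w_t$, contradicting the cut-norm violation.

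For the termination bound, I track the $L^2$-energy $\|W_{\cP_t}\|_{L^2(\gl_A \times \gl_A)}^2$ of the step-function approximation on the cumulative partition $\cP_t$ of $A$ generated by $\{S_s, T_s\}_{s < t}$: a standard Frieze--Kannan-type increment yields a gain of at least $\eps^2 \gl_A(B_t)^2$ per iteration, while the energy is bounded above by $1$, capping the iteration count at some explicit $T(\eps)$. Combined with the measure retention $\gl(B_{t+1}) \ge c(\eps)\gl(B_t)$, this gives $\gl(B_T) \ge c(\eps)^{T(\eps)}\gl(A)$, so $\gd(\eps) := c(\eps)^{T(\eps)}$ suffices. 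The main obstacle is proving the density-boost lemma cleanly, particularly in the off-diagonal case where the witness rectangle $S_t \times T_t$ is essentially disjoint from its transpose: here neither $\bar W_{S_t \times S_t}$ nor $\bar W_{T_t \times T_t}$ is automatically boosted, and one must instead argue via a larger union such as $(S_t \cup T_t)^2$, carefully balancing diagonal and off-diagonal contributions.
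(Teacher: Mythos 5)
Your density-boost step is essentially sound: from a cut-norm witness $S_t, T_t\subseteq B_t$ one can indeed find a union $C$ of atoms of the partition $\{S_t\cap T_t,\ S_t\setminus T_t,\ T_t\setminus S_t,\ B_t\setminus(S_t\cup T_t)\}$ with $\lrabs{\int_{C^2}(W-w_t)\dd\gl_{B_t}\dd\gl_{B_t}}\ge\eps/6$, and since $W\in\oi$ this forces $\gl_{B_t}(C)\ge\sqrt{\eps/6}$ and $|\bar W_{C^2}-w_t|\ge\eps/6$. The algebraic bookkeeping you sketch (expressing off-diagonal rectangle averages via square averages, using symmetry of $W$) can be carried through with explicit constants. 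So far so good.

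The gap is in the termination argument, and it is fatal as stated. The Frieze--Kannan energy increment from refining $\cP_t$ by the witness pair $S_t,T_t$ is indeed at least $\eps^2(\gl(B_t)/\gl(A))^2$. But the measure retention you establish is only $\gl(B_{t+1})\ge c(\eps)\gl(B_t)$ with $c(\eps)$ of order $\sqrt{\eps}$, which is a \emph{constant factor strictly less than one} for small $\eps$. Consequently $\gl(B_t)/\gl(A)\ge c(\eps)^t$ decays geometrically, the per-step energy gain is only $\ge\eps^2 c(\eps)^{2t}$, and the total over all $t\ge0$ is the convergent geometric sum $\eps^2/(1-c(\eps)^2)$, which is less than $1$ for small $\eps$. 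Thus the bound ``total energy gain $\le 1$'' is compatible with infinitely many iterations and does \emph{not} cap $T(\eps)$. In other words, the energy can remain bounded while you zoom in forever, precisely because each zoom-in step both shrinks the block and shrinks the guaranteed gain quadratically in the block size; the two effects conspire so that no contradiction is reached. To salvage the approach you would need either a measure retention of the form $\gl(B_{t+1})\ge(1-O(\eps))\gl(B_t)$ (which your density-boost analysis does not supply, since the deviant atom can be as small as $\sqrt{\eps/6}\,\gl(B_t)$), or a genuinely different potential function that decreases by a fixed amount per step independently of $\gl(B_t)$; neither is provided.

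For comparison, the paper's proof takes a completely different route: it applies the analytic Szemer\'edi regularity lemma of Lov\'asz and Szegedy once (not iteratively), classifies the resulting pairs of cells as good or bad via the cut norm on each cell product, colours the good pairs by the rounded cell density with $r=\ceil{3/\eps}$ colours, and then invokes Tur\'an's theorem to find a large complete subgraph of good pairs followed by Ramsey's theorem to find a monochromatic $K_r$ inside it; the set $B$ is the union of the $r$ corresponding cells. This sidesteps the iterative zoom-in entirely: the loss of measure is controlled once and for all by the regularity parameter $K$ and the Ramsey number $M$, rather than compounded over many rounds. The price is the tower-type bound inherited from the regularity lemma, but termination is never in question.
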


\begin{remark}
  The example $W(x,y)=\ett{x<y}$ shows that \refL{LLD}
  in general fails for non-symmetric functions.
\end{remark}

\begin{remark}
\refL{LLD} is not true with the stronger conclusion obtained by
  replacing cut norm by $L^1$ norm.
%$\Linxx{W-w}{B}<\eps$.
An example is (whp) given, for any $\eps<1/2$, by the $0/1$-valued function $W$
corresponding to a random graph $G(n,1/2)$, for a large $n$.
\end{remark}

Although \refL{LLD} is a purely analytic statement, we prove it using
combinatorial methods; in fact, the proof is an adaption of the
relevant parts of the proof of one of the main theorems in
\citet{SS:ind} to graphons (instead of graphs).

\begin{proof}
  By considering the restriction of $W$ to $A\times A$
  and a measure preserving bijection of $(A,\gl_A)$ onto
  $(\oi,\gl)$, it suffices to consider the case
  $A=\oi$.

Let $r=\ceil{3/\eps}$ and let $M$ be the
Ramsey number $R(r;r)=R(r,\dots,r)$ (with $r$ repeated
$r$ times); in other words, every colouring of the edges of the
complete graph $K_M$ with at most $r$ colours contains a
monochromatic $K_r$. (See \eg{} \cite{GRS:Ramsey}.)

By the (strong) analytic \SRL{}  by 
\citet[Lemma 3.2]{LSz:Sz}, there is an integer
$K=k(\eps/(4M^2))$ (depending on $\eps$ only, since
$M$ is a function of $\eps$) and, for some
$k\le K$, a partition
$\cP=\set{S_1,\dots,S_k}$ of $\oi$ into
$k$ sets of equal measure $1/k$ with the property that for
every set $R\subseteq\oi^2$ that is a union of at most
$k^2$ rectangles, we have
\begin{equation}
  \label{ow1}
\lrabs{\int_R(W-\wp)}\le\frac{\eps}{4M^2},
\end{equation}
where $\wp$ is the function that is constant on each set
$S_i\times S_j$ and equal to the average
$k^2\int_{S_i\times S_j} W$ there.
(I.e., $\wp$ is the conditional expectation of $W$
given the $\gs$-field generated by
$\set{S_i\times S_j}_{i,j=1}^k$.)
Let $w_{ij}$ be this average $k^2\int_{S_i\times S_j} W$.
We consider two cases separately:

\pfitem{i} $k\ge2M$.
Let, for $i,j=1,\dots,k$, 
\begin{equation}
  \label{ow2}
d\ij\=\cnx{W-\wp}{S_i\times S_j}
=\cnx{W-w\ij}{S_i\times S_j}
=\max\bigpar{d\ij^+,d\ij^-},
\end{equation}
where
\begin{equation*}%\label{ow3}
  d\ij^\pm
\=
\sup_{S\subseteq S_i,\; T\subseteq S_j} \pm k^2\int_{S\times T} 
%\bigpar{W(x,y)-\wp(x,y)}\dd\gl(x) \dd\gl(y).
\xpar{W-\wp}%\dd\gl\times \dd\gl
.\end{equation*}
It follows from \eqref{ow1} that
\begin{equation*}
  \sum_{i,j=1}^kd\ij^+\le k^2\frac{\eps}{4M^2},
\end{equation*}
and thus the number of pairs $(i,j)$ with
$d\ij^+>\eps/3$ is less than $k^2/M^2$, and
similarly for $d\ij^-$.

Say that a pair $(i,j)$ is \emph{bad} if
$d\ij>\eps/3$ or $i=j$, and \emph{good}
otherwise.
By \eqref{ow2}, the number of bad pairs is thus less than
$2k^2/M^2+k\le k^2/M$, using our assumption that $k\ge2M$
and assuming, as we may, that $M\ge4$. 

Consider the graph $H$ on $[k]$ where there is an edge
$ij$ whenever $(i,j)$ is a good pair. Further, give every
edge $ij$ in $H$ the colour $c\ij\=\max(\ceil{rw\ij},1)\in[r]$.
Since $H$ has more than $\frac12(k^2-\frac1M
k^2)=(1-\frac1M)\frac{k^2}2$ edges, Tur\'an's theorem
shows that $H$ contains a complete subgraph $K_M$,
and the choice of $M$ implies that this complete subgraph
contains a complete monochromatic subgraph $K_r$.

In other words, there is a $c\in[r]$ such that, after
renumbering the sets $S_i$ in $\cP$, for all
$i,j\in[r]$ with $i\neq j$, $(i,j)$ is a good
pair and $c\ij=c$.
Let $w\=c/r\in\oi$. Then, for $1\le i<j\le r$,
$c-1\le rw\ij\le c$, so $|w\ij-w|\le 1/r\le
\eps/3$.
Since $(i,j)$ is good, this further implies
\begin{equation*}
\cnx{W-w}{S_i\times S_j} \le d\ij+|w\ij-w|\le 2\eps/3,
\qquad 1\le i<j\le r.
\end{equation*}
On the other hand, trivially, for every $i$, %for any $(i,j)$,
\begin{equation*}
\cnx{W-w}{S_i\times S_i} \le \sup|W-w|\le1.
\end{equation*}
Let $B\=\bigcup_{i=1}^r S_i$. Then
$\gl(B)=r/k\ge r/K$ and, recalling that the sets
$S_i$ have the same measure,
\begin{equation*}
  \begin{split}
\cnx{W-w}{B} 
&\le 
r\qww\sum_{i,j=1}^r\cnx{W-w}{S_i\times S_j} 
\le r\qww\Bigpar{r(r-1)\frac{2\eps}3+r\cdot1}
\\&
<\frac{2\eps}3+\frac1r\le\eps.	
  \end{split}
\end{equation*}

\pfitem{ii} $k<2M$.
We simple take $B=S_1$ and $W=w_{11}$.
Then $\gl(B)=1/k>1/(2M)$, and \eqref{ow1} implies
\begin{equation*}
\cnx{W-w}{B} 
%=\gl(B)\qww\cnx{W-w}{B} 
\le\gl(B)\qww\cn{W-w} 
\le\gl(B)\qww\frac{\eps}{4M^2}
<\eps.
\end{equation*}
This completes the proof of \refL{LLD}.
\end{proof}

\begin{proof}[Proof of \refL{LD}]
We may assume that $\gax=0$.

For $\eps>0$ and $\eta>0$, let
  \begin{multline}
  \label{een}
E_{\eps,\eta}\=
\\
\Bigset{(x,y)\in(0,1)^2:(2\eps)\qww\int_{|x'-x|,|y'-y|<\eps}
|W(x',y')-W(x,y)|\dd x'\dd y'<\eta}.
  \end{multline}
The Lebesgue differentiation theorem says that 
\aex{} $(x,y)\in\bigcap_\eta\bigcup_\eps
E_{\eps,\eta}$; in other words, \aex{}
$(x,y)\in E_{\eps,\eta}$ for every $\eta>0$
and all sufficiently small $\eps>0$ (depending on $x$,
$y$ and $\eta$). For $\eta>0$ and
$n\ge1$, we can thus find
$\eps_1=\eps_1(\eta,n)\in(0,1/n)$ such that
$\gl\bigpar{E_{\eps_1(\eta,n),\eta}}>1-2^{-n}$.

For $n\ge1$, let $\gdn\=\gd(1/n)$ be as in
\refL{LLD} with $\eps=1/n$, and let $\etan\=\gdn^2/n$,
$\eps_2(n)\=\eps_1(\etan,n)$ and $E_n\=E_{\eps_2(n),\etan}$.
Then $\gl\bigpar{E_n}>1-2^{-n}$, so if
$\tE\=\bigcup_{n=1}^\infty\bigcap_{\ell=n}^\infty E_\ell$, 
then $\gl(\tE)=1$.
Let $E\=\tE\cup\set{(x,x):x\in\oi}$.

For $x\in\oio$ and $n$ so large that
$A_n(x)\=(x-\eps_2(n),x+\eps_2(n))\subset(0,1)$, use
\refL{LLD} to find $w_n(x)$ and a set
$B_n(x)\subseteq A_n(x)$ with $\gl(B_n(x))\ge\gdn\gl(A_n(x))=2\gdn\eps_2(n)$
such that
\begin{equation}
  \label{s1}
\cnx{W-w_n(x)}{B_n(x)}\le1/n.
\end{equation}
If $(x,y)\in E$ and $x\neq y$, then
$(x,y)\in\tE$ so for all large $n$, $(x,y)\in
E_n=E_{\eps_2(n),\etan}$, and thus, by
\eqref{een},
\begin{equation}  \label{s2}
  \begin{split}
\int_{B_n(x)\times B_n(y)}&
|W(x',y')-W(x,y)|\dd\gl_{B_n(x)} (x')\dd\gl_{B_n(y)} (y')
\\&
\le
(2\gdn\eps_2(n))\qww
\int_{A_n(x)\times A_n(y)}
|W(x',y')-W(x,y)|\dd x'\dd y'
\\&
<\gdn\qww\etan=\xfrac1n.
  \end{split}
\raisetag\baselineskip
\end{equation}

Let $\chi$ be a Banach limit, \ie, a multiplicative linear
functional on $\ell^\infty$ such that
$\chi((a_n)_1^\infty)=\lim_\ntoo a_n$ if the limit exists.
Now define
\begin{equation}\label{W'}
  W'(x,y)\=
  \begin{cases}
	\chi\bigpar{(w_n(x))_n}, & y=x, \\
W(x,y),& y\neq x.
  \end{cases}
\end{equation}
Note that $W'$ is a graphon and a version of
$W$. (Lebesgue measurability is immediate, since the diagonal is
a null set.)

Assume for the rest of the proof that $\xxm\in(0,1)^m$ with $(x_i,x_j)\in E$
for all $i$ and $j$. For sufficiently large $n$,
\eqref{s1} holds for all $x_i$ and \eqref{s2}
holds for all pairs $(x_i,x_j)$ with $x_i\neq
x_j$. Thus, if $x_i=x_j$, by \eqref{s1},
\begin{equation}
  \label{s3}
\cnx{W-w_n(x_i)}{B_n(x_i)\times B_n(x_j)} \le 1/n,
\end{equation}
and if $x_i\neq x_j$, by \eqref{s2}, since the cut
norm is at most the $L^1$ norm, 
\begin{equation}
  \label{s4}
\cnx{W-W(x_i,x_j)}{B_n(x_i)\times B_n(x_j)} 
%\le \norm{W-W(x_i,x_j)}_{L^1(B_n(x_i)} 
\le 1/n.
\end{equation}
For notational convenience, we define the constants
\begin{equation}\label{em1}
\wijn\=
  \begin{cases}
	w_n(x_i), & %\text{if } 
         x_i=x_j, \\
W(x_i,x_j),& x_i\neq x_j,
  \end{cases}
\end{equation}
and let $\bin\=B_n(x_i)$.
Thus, \eqref{s3} and \eqref{s4} say that for all $i,j\in[m]$, 
\begin{equation}
  \label{s5}
\cnx{W-\wijn}{\bin\times\bjn} \le 1/n.
\end{equation}

We extend the definition of $\phiw$ in \eqref{phiw} to families
$(W\ij)_{1\le i<j\le m}$ of functions and write
\begin{equation*} %  \label{s6}
\Phi[(W\ij)](\yym)\=
\Phi\bigpar{(W\ij(x_i,x_j))_{i<j}}.
\end{equation*}

A standard argument shows that, for $|W\ij|\le1$, say, for all
$i$ and $j$, and
any sets $B_1,\dots,B_m\subseteq\oi$ with positive measures, the mapping
\begin{multline*}
  (W\ij)\mapsto \Phix[(W\ij);\bbm]
\\
\=\int_{\bbmx}\Phi[(W\ij)](\yym)
\dd\gl_{B_1}(y_1)\dotsm \dd\gl_{B_m}(y_m)
\end{multline*}
is Lipschitz in cut norm, in each variable
separately; by linearity it 
suffices to consider the case when 
$\Phi$ is a monomial (and thus $\phiw=\psifw$ for
some graph $F$), and this result then is explicit in
\cite[Proof of Lemma 2.2]{BR}, see also  \cite{BCLSV1}.
Thus, by \eqref{s5}, recalling that each $\wijn$ here is
a constant,
\begin{equation}\label{s7}
\Phix[(W);\bbnm]  -\Phi((\wijn)_{i<j})=O(1/n).
\end{equation}
On the other hand,
$\Phi[(W)](\yym)=\phiw(\yym)=\gax=0$ \aex, by
assumtion, and thus 
$\Phix[(W);\bbnm]=0$. Consequently, \eqref{s7} yields
\begin{equation}\label{er}
\Phi((\wijn)_{i<j})=O(1/n).
\end{equation}

Apply the Banach limit $\chi$ to \eqref{er}.
With $z\ij\=\chi((\wijn)_{i<j})$ we
obtain, recalling that $\Phi$ is a polynomial,
\begin{equation}\label{s9}
\Phi((z\ij)_{i<j})=0.
\end{equation}
If $x_i\neq x_j$, then, by \eqref{em1},
$\wijn=W(x_i,x_j)$ for all $n$, and thus
$z\ij=W(x_i,x_j)=W'(x_i,x_j)$, see \eqref{W'}.
If $x_i=x_j$, then \eqref{em1} shows
that $\wijn=w_n(x_i)$, and thus, using \eqref{W'},
$z\ij=\chi((w_n(x_i))_n)=W'(x_i,x_j)$. 
Consequently, $z\ij=W'(x_i,x_j)$ for all $(i,j)$, and
\eqref{s9} can be written
$\Phi_{W'}(\xxm)=0$, as asserted.
(In order to avoid any worry of edge effects, we have considered
$x_i\in\oio$ only. For completeness, we, trivially, may
define $W'(0,0)\=W'(1,1)\=W'(\frac12,\frac12)$.)
\end{proof}

Finally, we mention another technical problem, which might be of
interest in some applications:
\begin{problem}
  The version $W'$ in \refL{LD} is Lebesgue
  measurable. Can $W'$ always be chosen to be Borel measurable?
\end{problem}
(The construction in the proof above, using a Banach limit, does not seem to
guarantee Borel measurability.)

\newcommand\AAP{\emph{Adv. Appl. Probab.} }
\newcommand\JAP{\emph{J. Appl. Probab.} }
\newcommand\JAMS{\emph{J. \AMS} }
\newcommand\MAMS{\emph{Memoirs \AMS} }
\newcommand\PAMS{\emph{Proc. \AMS} }
\newcommand\TAMS{\emph{Trans. \AMS} }
\newcommand\AnnMS{\emph{Ann. Math. Statist.} }
\newcommand\AnnPr{\emph{Ann. Probab.} }
\newcommand\CPC{\emph{Combin. Probab. Comput.} }
\newcommand\JMAA{\emph{J. Math. Anal. Appl.} }
\newcommand\RSA{\emph{Random Struct. Alg.} }
\newcommand\ZW{\emph{Z. Wahrsch. Verw. Gebiete} }
\newcommand\DMTCS{\jour{Discr. Math. Theor. Comput. Sci.} }

\newcommand\AMS{Amer. Math. Soc.}
\newcommand\Springer{Springer}
\newcommand\Wiley{Wiley}

\newcommand\vol{\textbf}
\newcommand\jour{\emph}
\newcommand\book{\emph}
\newcommand\inbook{\emph}
\def\no#1#2,{\unskip#2, no. #1,} %(typeset after year) 
\newcommand\toappear{\unskip, to appear}

\newcommand\webcite[1]{%\hfil  %???
   %\penalty0 %???
\texttt{\def~{{\tiny$\sim$}}#1}\hfill\hfill}
\newcommand\webcitesvante{\webcite{http://www.math.uu.se/~svante/papers/}}
\newcommand\arxiv[1]{\webcite{arXiv:#1.}}

\def\nobibitem#1\par{}


\begin{thebibliography}{99}
\bibitem[Bollob\'as, Janson and Riordan(2009+)]{cutsub} 
B.~Bollob\'as, S.~Janson and O.~Riordan,
 The cut metric, random graphs, and branching processes.
Preprint (2009).
\arxiv{0901:2091}

\bibitem[Bollob\'as and Riordan(2007+)]{BR}
B.~Bollob\'as and O.~Riordan,
Sparse graphs: metrics and random models.
Preprint, 2007.
\arxiv{0708.1919v3}

\bibitem[Borgs, Chayes and Lov\'asz(2008+)]{BCL:unique}
C. Borgs, J. Chayes, L. Lov\'asz,
Moments of two-variable functions and the uniqueness of graph limits.
Preprint, 2007.
\arxiv{0803.1244v1}

\bibitem[Borgs, Chayes, \Lovasz, S\'os and Vesztergombi(2007+)]{BCLSV1}
C. Borgs, J.~T. Chayes, L. \Lovasz, V.~T. S\'os and K. Vesztergombi,
Convergent sequences of dense graphs I: Subgraph frequencies, metric
properties and testing. 
Preprint, 2007.
\arxiv{math.CO/0702004} 

\bibitem[Borgs, Chayes, \Lovasz, S\'os and Vesztergombi(2007+)]{BCLSV2}
C. Borgs, J.~T. Chayes, L. \Lovasz, V.~T. S\'os and K. Vesztergombi,
Convergent sequences of dense graphs II: Multiway cuts and statistical physics.
Preprint, 2007.
%Available at
\webcite{http://research.microsoft.com/~borgs/}

\bibitem[Chung and Graham(1992)]{ChungG}
F. R. K. Chung \& R. L. Graham,
Maximum cuts and quasirandom graphs.  
\emph{Random graphs, Vol. 2 (Pozna\'n, 1989)},  23--33, 
%Wiley-Intersci. Publ., 
Wiley, New York, 1992. 

\bibitem[Chung, Graham and Wilson(1989)]{ChungGW:quasi}
F. R. K. Chung, R. L. Graham \&  R. M. Wilson,
Quasi-random graphs.  
\jour{Combinatorica}  \vol9  (1989),  no. 4, 345--362. 

\bibitem[Diaconis and Janson(2008)]{SJ209}
P. Diaconis \& S. Janson,
Graph limits and exchangeable random graphs.
\jour{Rendiconti di Matematica}
\vol{28} (2008), 33--61.

\bibitem[Diaconis, Janson and Holmes(2009+)]{threshold}
P. Diaconis, S. Holmes \& S. Janson,
Threshold graph limits and random threshold graphs.
In preparation.

\nobibitem[Elek and Szegedy(2007+)]{ElekSz}
G. Elek \&  B. Szegedy,
Limits of hypergraphs, removal and regularity lemmas. A non-standard
approach.
\arxiv{0705.2179v1}

\bibitem[Elek and Szegedy(2008+)]{ElekSz}
G. Elek \&  B. Szegedy,
A measure-theoretic approach to the theory of dense hypergraphs.
\arxiv{0810.4062v2}

\nobibitem{FKquick} 
A.~Frieze and R.~Kannan,
Quick approximation to matrices and applications.
\jour{Combinatorica} \vol{19} (1999), 175--220.

\bibitem[Gottlieb(1966)]{Gottlieb}
D. H. Gottlieb, A certain class of incidence matrices.  
\jour{Proc. Amer. Math. Soc.}  \vol{17}  (1966), 1233--1237.

\bibitem{GRS:Ramsey}
R. L. Graham, B. L.  Rothschild \& J. H.  Spencer, 
\book{Ramsey theory}. 
%Wiley-Interscience Series in Discrete Mathematics. 
%A Wiley-Interscience Publication. 
Wiley, New York, 1980.

\bibitem[\Lovasz{} and S\'os(2008)]{LSos}
L. \Lovasz{} and V.~T. S\'os,
Generalized quasirandom graphs.
\emph{J. Comb. Theory B.}
\vol{98} (2008), no. 1, 146--163.

\bibitem[\Lovasz{} and Szegedy(2006)]{LSz}
L. \Lovasz{} and B. Szegedy,
Limits of dense graph sequences.
\emph{J. Comb. Theory B} \vol{96} (2006), 933--957.


\bibitem[\Lovasz{} and Szegedy(2007)]{LSz:Sz}
L. \Lovasz{} and B. Szegedy,
Szemer\'edi's lemma for the analyst.  
\jour{Geom. Funct. Anal.}  \vol{17}  (2007),  no. 1, 252--270.  

\bibitem[Shapira(2008+)]{Shapira}
A. Shapira,
Quasi-randomness and the distribution of copies of a fixed graph.
\jour{Combinatorica}, to appear.

\bibitem[Shapira and Yuster(2008+)]{ShapiraY}
A. Shapira \& R. Yuster,
The effect of induced subgraphs on quasi-randomness.
Preliminary version, 
\inbook{Proc. 19th Annual ACM-SIAM Symposium on Discrete Algorithms (SODA)}, 
789--798,
ACM Press, 2008. 


\bibitem[Simonovits and S\'os(1991)]{SS:Sze}
M. Simonovits \& V. T. S\'os, 
Szemer\'edi's partition and quasirandomness.  
\jour{Random Structures Algorithms}  \vol2  (1991),  no. 1, 1--10. 

\bibitem[Simonovits and S\'os(1997)]{SS:nni}
M. Simonovits \& V. T. S\'os, 
Hereditarily extended properties, quasi-random graphs and not
necessarily induced subgraphs.  
\emph{Combinatorica}  \vol{17}  (1997),  no. 4,
577--596.  

\bibitem[Simonovits and S\'os(2003)]{SS:ind}
M. Simonovits \& V. T. S\'os, 
Hereditary extended properties, quasi-random graphs and induced
subgraphs. 
\jour{Combin. Probab. Comput.}  \vol{12}  (2003),  no. 3, 319--344.

\bibitem[Stein(1970)]{Stein}
E. M. Stein, 
\emph{Singular integrals and differentiability properties of
functions}. 
%Princeton Mathematical Series, No. 30 
Princeton University Press, Princeton, N.J. 1970.
% xiv+290 pp. 

\nobibitem[Tao(2006+)]{Tao}
T. Tao,
A correspondence principle between (hyper)graph theory and probability
theory, and the (hyper)graph removal lemma.
\jour{J. d'Analyse Math.}, to appear.
\arxiv{math/0602037v2}

\bibitem[Thomason(1987)]{Thomason87a}
A. Thomason, 
Pseudorandom graphs. 
\inbook{Random graphs '85 (Pozna\'n, 1985)}, 307--331, 
%North-Holland Math. Stud., 144, 
North-Holland, Amsterdam, 1987.

\bibitem[Thomason(1987)]{Thomason87b}
A. Thomason,  
Random graphs, strongly regular graphs and pseudorandom graphs. 
\inbook{Surveys in Combinatorics 1987 (New Cross, 1987)}, 173--195, 
London Math. Soc. Lecture Note Ser. \vol{123}, 
Cambridge Univ. Press, Cambridge, 1987. 

\bibitem[Yuster(2008)]{Yuster}
R. Yuster,
Quasi-randomness is determined by the distribution of copies of a
fixed graph in equicardinal large sets.
\inbook{Proc. 12th International Workshop on Randomization and Computation 
(RANDOM)}, 
596--601,
Springer, 2008.


%\bibitem[??]{??} 


\end{thebibliography}
\end{document}